\newcommand{\EE }{\mathsf{E}}
\newcommand{\PP}{\mathsf{P}}
\newcommand\egaldef{\stackrel{\mbox{\rm\tiny def}}{=}}
\newcommand\eqdist{\stackrel{\mbox{\rm\tiny d}}{=}}
\renewcommand{\epsilon}{\varepsilon}
\newcommand{\FCFS}{_{\mathrm{FCFS}}}
\newcommand{\ROS}{_{\mathrm{ROS}}}
\newcommand{\e}{\mathrm{e}}
\renewcommand{\d}{\mathrm{d}}
\newcommand\1{\leavevmode\hbox{\rm \small1\kern-0.35em\normalsize1}}
\newcommand\ind[1]{\1_{\{#1\}}}
\newtheorem{lemma}{Lemma}[section]
\newtheorem{theorem}[lemma]{Theorem}
\newtheorem{corollary}[lemma]{Corollary}
\newtheorem{prop}[lemma]{Proposition}%[section]
\theoremstyle{remark}
\newtheorem{remark}{Remark}[section]
\newcommand{\bp}{Z}
\newcommand{\st}{B}
\newcommand{\sfw}{B^{fw}}
\newcommand{\brp}{{\bp}^{rp}}
\newcommand{\srp}{B^{rp}}
\newcommand{\dd}{{\rm d}}
\newcommand{\beeq}{\begin{equation}}
\newcommand{\bear}{\begin{eqnarray}}
\newcommand{\bearno}{\begin{eqnarray*}}
\newcommand{\eneq}{\end{equation}}
\newcommand{\enar}{\end{eqnarray}}
\newcommand{\enarno}{\end{eqnarray*}}
\begin{document}
% \mainmatter

\title{Waiting time asymptotics in the single server queue with service in random order}
\author{O.J. Boxma
  \thanks{Department of Mathematics \& Computer Science and EURANDOM;
          Eindhoven University of Technology, 
          P.O. Box 513, 5600 MB Eindhoven, The Netherlands}
  \thanks{CWI, P.O. Box 94079, 1090 GB Amsterdam, The Netherlands}
\and S.G. Foss
  \thanks{Department of Actuarial Mathematics and Statistics,
          Heriot-Watt University,
          Riccarton, Edinburgh, EH14 4AS UK} 
\and J.-M. Lasgouttes
  \thanks{INRIA, Domaine de Voluceau, Rocquencourt, 
          BP 105, 78153 Le Chesnay Cedex, France} 
\and R. N\'u\~nez Queija\footnotemark[1] \footnotemark[2]}
% \author{O.J. Boxma\inst{1,2,3} \and S.G. Foss\inst{3} \and J.-M. Lasgouttes\inst{4} \and R. N\'u\~nez Queija\inst{1,2}}
\maketitle

\begin{abstract}
We consider the single server queue with service in random order. For
a large class of heavy-tailed service time distributions, we determine
the asymptotic behavior of the waiting time distribution. For the
special case of Poisson arrivals and regularly varying service time
distribution with index $-\nu$, it is shown that the waiting time
distribution is also regularly varying, with index $1-\nu$, and the
pre-factor is determined explicitly.

Another contribution of the paper is the heavy-traffic analysis of the
waiting time distribution in the $M/G/1$ case. We consider not only
the case of finite service time variance, but also the case of
regularly varying service time distribution with infinite variance. 

\bigskip
\noindent
\emph{Keywords}: single server queue, service in random order, heavy-tailed
distribution, waiting time asymptotics, heavy-traffic limit theorem.

\bigskip
\noindent
\emph{Acknowledgement}: J.-M. Lasgouttes did most of his research for
the present study while spending a sabbatical at EURANDOM in
Eindhoven. O.J. Boxma and S.G. Foss gratefully acknowledge the support
of INTAS, project 265 on ``The mathematics of stochastic networks''.
\end{abstract}
\section{Introduction}

We consider a single server queue that operates under the Random Order
of Service discipline (ROS; also SIRO = Service In Random Order): At
the completion of a service, the server randomly takes one of the
waiting customers into service. Research on the ROS discipline has a
rich history, inspired by its natural occurrence in several problems
in telecommunications. The $M/M/1$ queue with ROS was studied by
Palm~\cite{Palm}, Vaulot~\cite{Vaulot} and
Pollaczek~\cite{Pollaczek46,Pollaczek59}. Burke~\cite{Burke} derived
the waiting time distribution in the $M/D/1$ case. An expression for
the (Laplace-Stieltjes transform of the) waiting time distribution for
the $M/G/1$ case was obtained by Kingman~\cite{Kin:2} 
and Le Gall~\cite{LeGall}; the former author also studied the
\emph{heavy-traffic} behavior of the waiting time distribution, when
the service times have a finite variance. Quite recently,
Flatto~\cite{Flatto} derived detailed tail asymptotics of the waiting
time in the $M/M/1$ case. As pointed out in Borst et al.~\cite{BBMN},
this immediately yields detailed tail asymptotics of the sojourn time
in the $M/M/1$ queue with Processor Sharing, because these two
quantities are closely related in a single server queue with
exponential service times.

In the present study, we are also interested in waiting time tail
asymptotics of single server queues with ROS\@. However, here we
concentrate on the case of \emph{heavy-tailed} service time
distributions. The motivation for this study is twofold. Firstly, an
abundance of measurement studies regarding traffic in communication
networks like local area networks and the Internet has made it clear
that such traffic often has heavy-tailed characteristics. It is
therefore important to investigate the impact of such traffic on
network performance and to determine whether possibly adverse effects
can be overcome by employing particular traffic management schemes.
One possibility is to modify the `service discipline' (i.e.,
scheduling mechanism); this may lead to a significant change in
performance~\cite{BBN}.

Secondly, in real life there are many situations in which service is
effectively given in random order. Our own interest in ROS was
recently revived in a joint project with Philips Research concerning
the performance analysis of cable access networks. Collision
resolution of user requests for access to the common transmission
channel is being handled by a Capetanakis-Tsybakov-Mikhailov type tree
protocol~\cite{BG}. That collision resolution protocol handles the
requests in an order that is quite close to ROS~\cite{BDR}.

We now present an outline of the organization and main results of the
paper. Section~\ref{prelim} contains preliminary results on the busy
period and waiting time tail behavior in the $GI/G/1$ queue with a
non-preemptive and non-idling service discipline. They are used in
Section~\ref{global} to study the waiting time tail for the $GI/G/1$
queue with service in random order. The tail of the service time
distribution is assumed to be in the class $\cal{L} \bigcap \cal{D}$.
This class contains the class of regularly varying distributions;
these two classes, and others, are briefly discussed in Appendix~A. We
sketch a probabilistic derivation of the asymptotic behavior of the
waiting time distribution, deferring a detailed derivation to
Appendix~\ref{app:main}. For large $x$, $\PP(W\ROS>x)$ is written as a
sum of four terms, each of which has a probabilistic interpretation.
These interpretations are based on the knowledge that, for sums of
independent random variables with a subexponential distribution, the
most likely way for the sum to be very large is that \emph{one of the
summands} is very large (similar ideas were developed in \cite{BF} for a class
of stochastic networks -- see the so-called `Typical Event Theorem' there).
For example, the first of the four terms
equals $\rho$ times the probability that a residual service time is
larger than $x$, $\rho$ denoting the traffic load. The probabilistic
interpretation is that one possibility for the waiting time of a
tagged customer to be larger than some large value $x$ is, that the
residual service time of the customer in service upon his arrival
exceeds $x$. The other three terms are more complicated, taking into
account possibilities like: A customer with a very large service time
has already left when the tagged customer arrived, but it has left a
very large number of customers behind --- and the tagged customer has
to wait for many of those (and newly arriving) customers.

In the subsequent sections we restrict ourselves to the
case of Poisson arrivals. In the case of an $M/G/1$ queue with
regularly varying service time distribution, we are able to obtain
detailed tail asymptotics for the waiting time distribution, in two
different ways: (i) in Section~\ref{rvlst} we apply a powerful lemma
of Bingham and Doney~\cite{BD} for Laplace-Stieltjes transforms (LST)
of regularly varying distributions to an expression of Le
Gall~\cite{LeGall} for the waiting time LST in the $M/G/1$ queue with
ROS\@; (ii) in Section~\ref{rvprob} we work out the general tail
asymptotics of Section~\ref{global} for this case. Either way, the waiting time
tail is proven to exhibit the following behavior in the regularly varying case:
\begin{equation}
\PP(W\ROS>x) \sim \frac{\rho}{1-\rho}h(\rho) \PP(B^{fw} >x), ~~~ x \rightarrow \infty .
\label{WROS}
\end{equation}
Here, and throughout the paper, $f(x) \sim g(x)$ denotes $\lim_{x
\rightarrow \infty} f(x)/g(x) = 1$; $h(\rho)$ is specified in
Formulas (\ref{311}) and (\ref{312}).
$B^{fw}$ denotes the forward recurrence time of the service times,
i.e., the residual service time. It is well-known that, with $B$
denoting an arbitrary service time,
\begin{equation}
\PP(B^{fw}>x) = \int_x^{\infty}  \frac{\PP(B>u)}{\EE  B} {\rm d}u , ~~~ x \geq 0.
\end{equation}
Note that, except for Poisson arrivals, $B^{fw}$ has a different
distribution than the residual service requirement of the customer in
service at arrival epochs.

Formula (\ref{WROS}) should be compared with the
waiting time tail asymptotics in the $M/G/1$ FCFS
case~\cite{Pakes75}:
\begin{equation}
\PP(W\FCFS>x) \sim \frac{\rho}{1-\rho} \PP(B^{fw} >x), ~~~ x \rightarrow \infty .
\end{equation}
We shall show that $h(\rho) \leq 1$,
which implies that ROS yields a (slightly) lighter tail than FCFS.

In Section~\ref{heavytraffic} we
allow the service time distribution to be completely general.
We study the waiting time distribution in the case
of heavy traffic (traffic load $\rho \uparrow 1$).
When the service time variance is finite, we retrieve
a result of Kingman~\cite{Kin:2}.
When the service time distribution is regularly varying with infinite variance,
we exploit a result of~\cite{BC} to derive a new heavy-traffic 
limit theorem.

The paper ends with four appendices. Appendix~\ref{app:classes} discusses several classes of heavy-tailed distributions.
Appendices~\ref{app:srp} and~\ref{app:main} provide the proofs of two theorems.
In Appendix~\ref{app:constarr} we state and prove a lemma that is not explicitly used in the paper.
However, it has been very useful in guiding us to the proofs of our main results.
Essentially, the lemma states that when interested in events 
involving a large service time, we may in fact ignore the 
randomness in the arrival process.

\begin{remark}
A different way of randomly choosing a customer for service is the following.
Put an arriving customer, who finds $n$ waiting customers,
with probability $\frac{1}{n+1}$ in one of the positions $1,2,\dots,n+1$,
and serve customers according to their order in the queue.
Fuhrmann and Iliadis~\cite{FI} prove that this discipline gives rise to
exactly the same waiting time distribution as ROS.
\end{remark}

\section{Preliminaries: Busy period and waiting time}
\label{prelim}

We first focus on the busy period of the $GI/G/1$ queue.
For the time being we may take the service discipline to be the familiar FCFS, since the busy period is the same for any 
non-idling discipline.
At the end of this section -- in Corollary~\ref{cor:non-idling} -- we use the results on the busy period to state a useful 
relation for the waiting time in any non-idling service discipline.

Let us introduce some notation.
The mean inter-arrival time is denoted with $\alpha$ and the random variable ${{\st}}$ stands for a generic service time, with 
mean $\EE  B=\beta$.
A generic busy period is denoted with the random variable ${\bp}$ and $\tau$ is the number of customers served in a busy 
period.
The residual busy period {\em as seen by an arriving customer} (i.e., the Palm version associated with arrivals) is denoted 
with $\brp$.
As before, we use $\sfw$ to denote a random variable with the forward recurrence time distribution of the service times.

For the $GI/G/1$ queue the proof of the following proposition is given in~\cite{fz}.
The definitions of the widely used classes ${\cal S}^*$, ${\cal IRV}$, ${\cal L}$ and ${\cal D}$ can be found in 
Appendix~\ref{app:classes}.
The first proposition can be specialized to the $M/G/1$ queue by substituting $\EE  \tau= \frac{1}{1-\rho}$.

\begin{prop}
\label{prop:bp}
If ${\st} \in {\cal S}^{*}$, then,
for any $0<c_1 < 1 < c_2$, 
\begin{equation} 
\lim \sup_{n\to\infty}
\frac{\PP ({\tau}>n)}{\EE  \tau \PP({\st}>c_1 n \alpha(1-\rho ))}
\leq 1,
\end{equation}
and
\begin{equation} 
\lim \inf_{n\to\infty}
\frac{\PP ({\tau}>n)}{\EE  \tau \PP({\st}>c_2 n \alpha(1-\rho ))}
\geq 1
.
\end{equation}
Similarly, 
for any $0<d_1 < 1 < d_2$, 
\begin{equation} 
\lim \sup_{x\to\infty}
\frac{\PP ({\bp}>x)}{\EE  \tau \PP({\st}>d_1x(1-\rho ))}
\leq 1,
\end{equation}
and
\begin{equation} 
\lim \inf_{x\to\infty}
\frac{\PP ({\bp}>x)}{\EE  \tau \PP({\st}>d_2x(1-\rho ))}
\geq 1
.
\end{equation}
In particular, if ${\st} \in {\cal IRV}$ then
\begin{equation} \label{tau}
\PP ({\tau}>n) \sim \EE  \tau \PP({\st}>n \alpha (1-\rho ))
,\quad \mbox{as} \quad n\to\infty ,
\end{equation}
and
\begin{equation} \label{B}
\PP ({\bp}>x) \sim \EE  \tau \PP({\st}>x(1-\rho ))
,\quad \mbox{as} \quad x\to\infty .
\end{equation}
\end{prop}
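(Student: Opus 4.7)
The plan is to represent $\tau$ and $\bp$ via an associated random walk with negative drift and subexponential step tail, and then invoke the one-big-jump first-passage asymptotics established for such walks in the $\mathcal{S}^*$ setting (as cited from \cite{fz}).

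\textbf{Random-walk embedding.} Let $B_1, B_2, \ldots$ be the service times and $A_2, A_3, \ldots$ the inter-arrival times during a busy period initiated by customer $1$ at time $0$. Set $X_i \egaldef B_i - A_{i+1}$, so $\EE X_i = \beta - \alpha = -\alpha(1-\rho) < 0$. From Lindley's recursion one checks that
\[
\tau = \min\{n \geq 1 : S_n \leq 0\}, \qquad S_n \egaldef X_1 + \cdots + X_n,
\]
and correspondingly $\bp = B_1 + \cdots + B_\tau$. Since $A_2$ has finite mean and (under our hypotheses on $\st$) a much lighter tail than $B_1$, the subexponentiality of $\st$ yields $\PP(X_1 > x) \sim \PP(\st > x)$.

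\textbf{One-big-jump asymptotic for $\tau$.} For a random walk with negative drift $\mu$ whose step tail lies in $\mathcal{S}^*$, the first-passage time to $(-\infty,0]$ satisfies (in the IRV case) $\PP(\tau > n) \sim \EE\tau \cdot \PP(X_1 > n|\mu|)$. The intuition is transparent: a single jump $X_i$ of size $\approx n|\mu|$ lifts the walk by that amount, after which the negative drift takes $\approx n$ further steps to pull it back below $0$; without any such big jump, $\tau$ is typically of order $\EE\tau$. Wald's identity $\EE S_\tau = \mu \EE\tau$, combined with the fact that the big jump may occur at any of the $\approx \EE\tau$ steps, produces the $\EE\tau$ prefactor. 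Substituting $|\mu|=\alpha(1-\rho)$ and $\PP(X_1>x) \sim \PP(\st>x)$ gives (\ref{tau}); for merely $\mathcal{S}^*$ (without IRV), the same argument delivers the limsup/liminf sandwich with $0<c_1<1<c_2$, since $\PP(\st>cx)/\PP(\st>x)$ is then only bounded above and below, not forced to tend to $1$.

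\textbf{Passing to $\bp$.} If some service time $B_i$ in the busy period is exceptionally large, equal to $y$, then during this service about $y/\alpha$ customers arrive, and each spawns a sub-busy period of expected duration $\beta/(1-\rho)$. The busy period therefore lasts about $y + (y/\alpha)\cdot\beta/(1-\rho) = y/(1-\rho)$, so $\{\bp>x\}$ typically demands some $B_i > x(1-\rho)$. As the big service time can belong to any of the $\approx \EE\tau$ customers in the period, one obtains $\PP(\bp>x) \sim \EE\tau \cdot \PP(\st>x(1-\rho))$. Rigorously this follows either by rerunning the previous step on the random walk whose first-passage time equals $\bp$, or by conditioning on $\tau$ and analyzing the random sum $\sum_{i=1}^{\tau}B_i$ via $\mathcal{S}^*$ convolution estimates.

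\textbf{Main obstacle.} The delicate part is the lower bound: one must rule out contributions to $\{\tau>n\}$ (or $\{\bp>x\}$) coming from scenarios other than a single extreme service time, such as several moderately large services conspiring, or an atypically sparse arrival pattern. It is precisely to handle these alternatives that the stronger $\mathcal{S}^*$ class (as opposed to mere subexponentiality $\mathcal{S}$) is needed: it supplies the correct asymptotic behavior of the integrated-tail convolution $\int_0^x \PP(\st>u)\PP(\st>x-u)\,du$, which is the technical backbone of the first-passage result in \cite{fz}.
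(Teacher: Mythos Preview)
The paper does not actually prove this proposition: the sentence immediately preceding it reads ``For the $GI/G/1$ queue the proof of the following proposition is given in~\cite{fz}.'' So the paper's ``proof'' is a bare citation to Foss and Zachary. Your proposal does the same thing---you explicitly invoke the first-passage asymptotics ``as cited from \cite{fz}''---but you add a layer of heuristic explanation (random-walk embedding, single-big-jump picture, branching interpretation of the busy period). In that sense you and the paper are aligned: neither supplies an independent proof, and the substantive work is deferred to the same reference.

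One slip worth flagging: in your final paragraph you write that ``the delicate part is the lower bound: one must rule out contributions \ldots\ coming from scenarios other than a single extreme service time.'' This is inverted. Ruling out alternative scenarios (several moderate jumps, atypical arrivals) is what is needed for the \emph{upper} bound $\limsup \PP(\tau>n)/[\EE\tau\,\PP(\st>c_1 n\alpha(1-\rho))]\le 1$; the lower bound is the constructive, easier direction, obtained by exhibiting the single-big-jump event and showing it forces $\tau>n$. The $\mathcal{S}^*$ hypothesis is indeed the tool that controls the convolution-type sums appearing in the upper bound, so your identification of where $\mathcal{S}^*$ enters is right, but the label is wrong.

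Beyond that, your sketch is at the level of intuition rather than proof: the claim $\PP(X_1>x)\sim\PP(\st>x)$ needs the inter-arrival time to have a lighter tail than $\st$ (true here since $\st\in\mathcal{L}$ forces $\PP(\st>x)$ to decay slower than any exponential, while $A_2$ has finite mean---but this should be stated); and the passage from $\tau$ to $\bp$ via ``rerunning the previous step'' or ``conditioning on $\tau$'' hides genuine work, since $\tau$ and the $B_i$'s are dependent. None of this is wrong as an outline, but it is not a proof either---which, to be fair, is exactly the status the paper grants this proposition.
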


The next proposition gives the asymptotics of the distribution of the {\em residual} busy period.
Heuristically speaking, it indicates that a large residual busy period requires exactly one large service requirement (in the past).
When analyzing waiting times (and residual service requirements) this result proves to be very useful as we shall see later.
In fact, we shall sharpen the statement of the proposition (in line with the heuristics) in Corollary~\ref{cor:bpres}.

\begin{prop}
\label{prop:bpres}
If ${\st}\in {\cal L} \bigcap {\cal D}$, then
\begin{eqnarray} \label{probab}
\PP (\brp>x) &\sim & 
\sum_{m=1}^{\infty}
\PP ({\st}_{-m} > (x+m \alpha) (1-\rho ))
\\
\label{Bres}
&\sim& \frac{\rho}{1-\rho} \PP({\st}^{fw}>x(1-\rho )),
\qquad
x\rightarrow\infty,
\end{eqnarray}
where ${\st}_{-m}$ is the service time of the $m$-th customer (counting backwards)   
in the elapsed busy period.
\end{prop}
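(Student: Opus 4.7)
The strategy is to prove the two equivalences separately. The second one is essentially a Riemann-sum estimate: since every $B_{-m}$ is distributed as $B$, the sum in (\ref{probab}) equals $\sum_{m=1}^\infty \PP(B > (x+m\alpha)(1-\rho))$, a Riemann sum with step $\alpha(1-\rho)$ for $(\alpha(1-\rho))^{-1}\int_{x(1-\rho)}^\infty \PP(B > u)\,\dd u$, which rewrites as $\frac{\beta}{\alpha(1-\rho)}\PP(\sfw > x(1-\rho)) = \frac{\rho}{1-\rho}\PP(\sfw > x(1-\rho))$ via $\rho=\beta/\alpha$. For the non-increasing function $g(u)=\PP(B>u)$, the sum and the integral (after scaling by $1/(\alpha(1-\rho))$) differ by at most $g(x(1-\rho))$, so the desired equivalence reduces to verifying $\PP(B>t) = o\bigl(\int_t^\infty \PP(B>u)\,\dd u\bigr)$ as $t\to\infty$, a property supplied by $B\in{\cal L}\cap{\cal D}$.

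For the first equivalence, the plan is to invoke the classical ``one big jump'' picture: a large residual busy period is overwhelmingly caused by a single exceptional service time located somewhere in the elapsed portion of the current busy period. Suppose the tagged arrival is preceded by customer $-m$, whose own arrival occurred roughly $m\alpha$ time units earlier by the LLN for inter-arrival times. If $B_{-m}=y$ is unusually large, then by Proposition~\ref{prop:bp} the sub-busy period initiated by that customer has duration $\approx y/(1-\rho)$; subtracting the elapsed $m\alpha$ yields a residual of $\approx y/(1-\rho)-m\alpha$, which exceeds $x$ precisely when $y>(x+m\alpha)(1-\rho)$, producing the generic term of (\ref{probab}). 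For the upper bound I would decompose $\{\brp>x\}$ according to the position $m$ of the first exceptional customer and apply a union bound, with subexponentiality-type estimates suppressing configurations in which two or more past service times are simultaneously large. The matching lower bound is obtained by constructing, for each $m$, an event on which $B_{-m}$ just exceeds $(x+m\alpha)(1-\rho)$ while the remaining inter-arrival and service times stay close to their means; on such an event Proposition~\ref{prop:bp} forces $\brp>x$.

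The main obstacle will be making these heuristics uniform in $m$ so that the infinite series in (\ref{probab}) can be controlled term by term. One must control, with rates uniform in $m$: (i) the deviation of the true elapsed time from its nominal value $m\alpha$; (ii) the error in the sub-busy-period approximation from Proposition~\ref{prop:bp} at moderate values of $m$; and (iii) the negligibility of configurations with several simultaneously large past service times. This uniformity is exactly where ${\cal D}$ complements ${\cal L}$: long-tailedness alone provides only pointwise ratio limits, whereas dominated variation (through Potter-type bounds) delivers the uniform estimates needed both in the Riemann-sum step and in the multi-summand error control.
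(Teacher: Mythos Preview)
Your Riemann-sum treatment of the second equivalence is fine and is exactly what the paper does.

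For the first equivalence, the paper takes a route that bypasses the direct ``one big jump'' decomposition you outline. It first derives the exact identity
\[
\PP(\brp>x)
=\sum_{m=1}^\infty \PP(V_{-m}=0,\ Z_{-m}>T_{-m}+x)
=\frac{1}{\EE\tau}\sum_{m=1}^\infty \PP(Z_0>T_m+x),
\]
obtained by conditioning on which past customer initiated the current busy period (stationarity gives $\PP(V_{-m}=0)=1/\EE\tau$, and $(Z_{-m},T_{-m})\eqdist(Z_0,T_m)$). This reduces the problem to the tail of the \emph{ordinary} busy period $Z_0$, to which Proposition~\ref{prop:bp} applies directly. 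The only remaining difficulty is the dependence between $Z_0$ and $T_m$; the paper handles it by sandwiching $T_m$ between $m\alpha(1\pm\varepsilon)$ via Chernoff bounds and then invoking the ${\cal IRV}$ property of $\sfw$ to absorb the $\varepsilon$. No ``big jump'' bookkeeping is needed at this stage.

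Your programme can be made to work, but two points need care. First, your upper bound (``decompose according to the first exceptional customer and apply a union bound'') presupposes that some $B_{-m}$ is exceptional; showing that $\PP(\brp>x,\ \text{all }B_{-m}\text{ below threshold})$ is negligible is precisely the hard half of the upper bound, and a union bound over big-jump positions does not supply it. Second, Proposition~\ref{prop:bp} is a distributional tail equivalence, not a pathwise statement, so it cannot on its own ``force $\brp>x$'' on a specific event; what you need for the lower bound is an SLLN construction. That construction is carried out in the paper, but in Corollary~\ref{cor:bpres}, which upgrades Proposition~\ref{prop:bpres} to the joint statement you are effectively targeting. In short, the paper's ordering is: get the marginal asymptotic cheaply via the cycle identity and Proposition~\ref{prop:bp}, then separately establish the ``typical event'' picture; your sketch merges these and thereby takes on more work than is needed for the proposition as stated.
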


\begin{proof}
Let us concentrate on the residual busy period as seen by an arbitrary customer (``customer~0'') arriving at time~$T_0=0$.
With $V_{-m}$ we denote the amount of work in the system found by customer $-m$ and by $\bp_{-m}$ the consecutive busy 
period if $V_{-m}=0$.
Furthermore, for $m>0$, $T_{-m}$ is the time between the arrival of customer~$-m$ and time~0 and $T_{m}$ is the time of 
arrival of the $m$-th customer after time~0.
We may write
\bearno
    \PP\left(\bp^{rp}>x\right)
    &=&
    \sum_{m=1}^\infty\PP\left(V_{-m}=0,\bp_{-m}>T_{-m}+x\right)
    \\
    &=&
    \sum_{m=1}^\infty\PP\left(V_{-m}=0\right)\PP\left(\bp_{-m}>T_{-m}+x\right)
    \\
    &=&
    \frac{1}{\EE \tau} \sum_{m=1}^\infty \PP\left(\bp_{0}>T_{m}+x\right).
\enarno
From this, the proof is quite straightforward in the case of
constant inter-arrival times $T_{m} \equiv m \alpha$. 
In that case it follows from Proposition~\ref{prop:bp}, that for any $\delta\in(0,1)$ and $d_1>1$ there is an $x_0$ such 
that 
$$
\PP (\bp_{0}> T_{m} +x) \equiv
\PP (\bp > m\alpha +x) \leq 
(1+\delta)\EE  \tau \PP\left( \st> d_1(x+m\alpha)(1-\rho )\right),
$$
for all $x>x_0$ and $m\ge1$.
For $x>x_0$ this gives
\bearno
    \PP\left(\bp^{rp}>x\right)
    &\leq&
    (1+\delta)\sum_{m=1}^\infty \PP\left( \st> d_1(x+m\alpha)(1-\rho )\right)
    \\
    &\sim&
    \frac{(1+\delta)\rho}{d_1(1-\rho)} \PP\left(\sfw>x(1-\rho)\right).
\enarno
Now let $\delta\rightarrow0$, $d_1\rightarrow1$ and use that $\sfw\in {\cal IRV}$ (by Property~(\ref{7.5}) in Appendix~\ref{app:classes}) to obtain the desired upper bound
\bearno
    \PP\left(\bp^{rp}>x\right)
    &\leq&
    (1+o(1))\frac{\rho}{1-\rho} \PP\left(\sfw>x(1-\rho)\right),
    \qquad x\rightarrow\infty.
\enarno
The lower bound can be derived similarly.

When inter-arrival times are not constant the proof is more involved since $Z_{0}$ and $T_{m}$ are not independent.
First we note that since $B\in {\cal L}$, then, for any $\varepsilon > 0$, 
\begin{equation}
\label{eq:noexpdecay}
 \e^{-\varepsilon x} = o\left(\PP\left(B>x\right)\right),
 \qquad x\to\infty.
\end{equation}
We shall now develop upper and lower bounds for $\sum_{m=0}^\infty \PP\left(Z_0>T_{m}+x\right)$, which coincide for 
$x\rightarrow\infty$. 

{\em Upper Bound}. 
For any $\varepsilon \in (0,1)$, 
\[
 \PP (Z_0>T_m+x) \leq \PP \left(Z_0> -\varepsilon x + m\alpha (1-\varepsilon  )+x\right) 
 + \PP (T_m \leq -\varepsilon x+m\alpha (1-\varepsilon )). 
\]
From Proposition~\ref{prop:bp}, for any $d_1\in (0,1)$, 
\begin{eqnarray*} 
 \sum_{m=0}^\infty \PP (Z_0> x(1-\varepsilon ) + m\alpha (1-\varepsilon )) 
 &\leq & (1+o(1)) \EE  \tau \sum_{m=0}^\infty \PP 
 (B> d_1(1-\varepsilon ) (1-\rho ) (x+m\alpha)),
\end{eqnarray*} 
as $x\to\infty$.
For notational convenience we set $c_1 = d_1(1-\varepsilon )$ and note that $c_1\uparrow 1$ when 
$d_1\uparrow 1$ and $\varepsilon \downarrow 0$. 
Furthermore,
\begin{eqnarray*} 
 \sum_{m=0}^\infty \PP (B> c_1 (1-\rho ) (x+m\alpha))
 &\sim & 
 \frac{\rho}{c_1(1-\rho )} \PP 
 (B^{fw} > c_1(1-\rho )x). 
\end{eqnarray*} 
We use $t_n$ to denote the inter-arrival time of customer $n$ and
customer $n+1$, thus, $T_m=t_1+\cdots+t_m$. By the Chernoff inequality
we have, for any $r>0$,
\[
 \PP (T_m \leq -\varepsilon x + m\alpha(1-\varepsilon )) 
 =
 \PP \left( 
 \e^{-rT_m} \geq \e^{r\varepsilon x - rm\alpha(1-\varepsilon )}\right)
 \leq 
 \left( \EE  \e^{-rt_1} \right)^m \e^{rm\alpha(1-\varepsilon )-r\varepsilon 
   x}. 
\]
Since $\EE  t_1 = \alpha$ and $\varepsilon >0$, we can choose $r>0$ sufficiently small, such that 
\begin{equation}
\label{addi} 
 \e^{r\alpha (1-\varepsilon )}\EE \e^{-rt_1} < 1. 
\end{equation}
Then 
\begin{equation}
\label{eq:chernoff1}
 \sum_{m=0}^\infty \PP (T_m \leq -\varepsilon x + m\alpha(1-\varepsilon ))
 \leq \e^{-r\varepsilon x} \sum_{m=0}^\infty \left( \e^{r\alpha(1-\varepsilon )}\EE \e^{-rt_1}  \right) ^m =
 \frac{\e^{-r\varepsilon x} }{1-\e^{r\alpha(1-\varepsilon )}\EE \e^{-rt_1} } . 
\end{equation}
Thus, from~(\ref{eq:noexpdecay}),
$$ 
 \lim \sup_{x\to\infty} 
 \frac{\sum_{m=0}^\infty \PP (Z_0>T_m+x)}{ 
 \PP (B^{fw}>c_1 (1-\rho ) x)} 
 \leq \EE \tau \frac{\rho}{c_1(1-\rho )}. 
$$ 
Since $B^{fw}\in {\cal IRV}$ (Property~(\ref{7.5}) in Appendix~\ref{app:classes}), letting $c_1$ to $1$, we get 
 $$ 
 \lim \sup_{x\to\infty} 
 \frac{\sum_{m=0}^\infty \PP (Z_0>T_m+x)}{ 
 \PP (B^{fw}>(1-\rho ) x)} 
 \leq \EE  \tau \frac{\rho}{1-\rho },
$$ 
which concludes the upper bound.

{\em Lower Bound}. 
For any $\varepsilon \in (0,1)$, put 
$$ 
 n_{x,m} = \left\lfloor \frac{x(1+\varepsilon )}{\alpha} + \varepsilon m \right\rfloor,
$$ 
where $\lfloor y \rfloor$ denotes the integer part of a positive real number $y$.
Obviously,
\begin{eqnarray*} 
 \PP (Z_0>T_m+x) 
 &\geq & 
 \PP (Z_0>T_{m+n_{x,m}}, T_{m+n_{x,m}}-T_m \geq x)\\ 
 &\geq & 
 \PP (Z_0>T_{m+n_{x,m}}) - \PP (T_{m+n_{x,m}}-T_m <x)\\ 
 &=& 
 \PP (\tau > m+n_{x,m}) - \PP (T_{n_{x,m}}<x). 
 \end{eqnarray*} 
From Proposition~\ref{prop:bp}, for any $c_2>1$, 
$$ 
 \PP (\tau > m+n_{x,m}) \geq
 (1+o(1)) 
 \EE  \tau \PP (B> \alpha(1-\rho ) (m+n_{x,m})c_2). 
$$ 
Similar to (\ref{addi}), we can choose $r>0$ such that 
\[
 \e^{r\alpha (1+\frac{1}{2}\varepsilon )/(1+\varepsilon)}\EE \e^{-rt_1} < 1,
\]
so that
\begin{eqnarray*}
 \sum_{m=0}^\infty \PP (T_{n_{x,m}}<x) 
 &=& \sum_{m=0}^\infty \PP (\e^{-rT_{n_{x,m}}}>\e^{-rx}) 
 \leq
 \sum_{m=0}^\infty \e^{rx}\EE \e^{-rT_{n_{x,m}}}
 = \sum_{m=0}^\infty \e^{rx} \left(\EE \e^{-rt_1}\right)^{n_{x,m}}
 \\
 &\leq& \frac {\e^{r x} 
\left(\EE\e^{-rt_1}\right)^{x(1+\varepsilon)/\alpha  -1}}
 { 1-\left(\EE\e^{-r t_1}\right)^\varepsilon}
 \leq \frac {\e^{-\frac{1}{2}\varepsilon r x} } { 
\left(
1-\left(\EE\e^{-r t_1}\right)^\varepsilon
\right) \EE e^{-rt_1}}.
\end{eqnarray*} 
Thus, by~(\ref{eq:noexpdecay}), 
$$ 
 \sum_{m=0}^\infty \PP (Z_0>T_m+x) 
 \geq 
 (1+o(1)) 
 \frac{\rho \EE  \tau}{c_2(1+\varepsilon )(1-\rho )} 
 \PP (B^{fw} > (1-\rho ) x(1+\varepsilon )c_2). 
$$ 
Letting $c_2\downarrow 1$ and $\varepsilon \downarrow 0$, 
we get the desired result.

\end{proof}

\begin{remark}
For the class of RV tails the equivalence (\ref{B}) was proved by
Zwart~\cite{zwart}.  The asymptotic result (\ref{B}) also holds in a
class of so-called square-root insensitive subexponential
distributions under the additional condition that the second moment
of the inter-arrival time distribution is finite~\cite{jm}.  More
precisely, Jelenkovic et al. \cite{jm} established the following
result for the stable $G1/G1/1$ queue. If the following three
conditions are satisfied:
\begin{enumerate}
\item[(a)] The distribution of service times is square-root insensitive:
  $$
  {\bf P} (B>x+\sqrt{x}) \sim {\bf P} (B>x), \quad x\to\infty ;
  $$
\item[(b)] also, the distribution of $B$ belongs to the class of
  so-called {\it strong concave} ({\cal SC}) distributions -- which
  is a sub-class of ${\cal S}^*$;
\item[(c)] the distribution of inter-arrival times has a finite second
  moment;
\end{enumerate}
then (\ref{B}) holds. Under the same conditions, it may be shown
that the distribution tail of the number of customers served in a
busy period, $\tau$, has similar asymptotics:
$$
{\bf P} (\tau >n) \sim {\bf E} \tau {\bf P} (B > n\alpha (1-\rho)).
$$
Therefore, one can conclude that the asymptotics (2.7)-(2.8) are
also valid under conditions (a)-(c) above.  It would be worthwhile
to formulate and prove Corollaries~\ref{cor:bpres} and
\ref{cor:non-idling} and Theorems~\ref{thm:srp} and \ref{thm:main}
(the main theorem) for the class of service time distributions that
satisfy (a) and (b) above, under the restriction that the arrival
times satisfy (c).
\end{remark}

\medskip

Proposition~\ref{prop:bpres} states that, for large $x$, the events
$\bigcup_{m=1}^{\infty} \{ {\st}_{-m} > (x+m\alpha) (1-\rho )\}$ and
$\left\{\brp>x\right\}$ are equally likely. In the sequel we shall
need that these two events actually occur simultaneously (for large
$x$). This statement is made precise in the following corollary.

\begin{corollary}
\label{cor:bpres}
If ${\st}\in {\cal L} \bigcap {\cal D}$, then
\begin{eqnarray} 
\label{eq:bpsum}
\PP (\brp>x) &\sim & 
\sum_{m=1}^{\infty}
\PP (\brp>x, {\st}_{-m} > (x+m\alpha) (1-\rho ))
\\
&\sim & 
\label{eq:bpunion}
\PP (\brp>x, \bigcup_{m=1}^{\infty}\left\{ {\st}_{-m} > (x+m\alpha) (1-\rho )\right\})
,
\qquad
x\rightarrow\infty.
\end{eqnarray}
\end{corollary}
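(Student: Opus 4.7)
My plan is to reduce both equivalences in the corollary to the single estimate
\begin{equation*}
\PP(\brp > x,\, A^c) \;=\; o\bigl(\PP(\brp > x)\bigr), \qquad x \to \infty,
\end{equation*}
where $A_m := \{{\st}_{-m} > (x+m\alpha)(1-\rho)\}$ and $A := \bigcup_{m \ge 1} A_m$. Granted this, the trivial chain
\begin{equation*}
\PP(\brp > x, A) \;\le\; \sum_{m=1}^\infty \PP(\brp > x, A_m) \;\le\; \sum_{m=1}^\infty \PP(A_m) \;\sim\; \PP(\brp > x)
\end{equation*}
(last step by Proposition~\ref{prop:bpres}) supplies the matching upper bounds in both (\ref{eq:bpsum}) and (\ref{eq:bpunion}), while the key estimate delivers the lower bound $\PP(\brp > x, A) \sim \PP(\brp > x)$, from which (\ref{eq:bpsum}) follows via the same chain inverted.

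To reach the key estimate, I would first record that $\PP(A) \sim \PP(\brp > x)$. Indeed, the $A_m$ depend on pairwise independent service times, so Bonferroni yields
\begin{equation*}
0 \;\le\; \sum_m \PP(A_m) - \PP(A) \;\le\; \tfrac{1}{2}\Bigl(\sum_m \PP(A_m)\Bigr)^2 \;\sim\; \tfrac{1}{2}\PP(\brp > x)^2 \;=\; o\bigl(\PP(\brp > x)\bigr),
\end{equation*}
since $\PP(\brp > x) \to 0$. It then suffices to establish $\PP(A,\, \brp \le x) = o(\PP(A))$: on the event $A$ some past service time is anomalously large, and one must show that the residual busy period then typically does exceed $x$.

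The heuristic --- which is also the origin of the threshold $(x+m\alpha)(1-\rho)$ --- is that when ${\st}_{-m} > y$ for large $y$, the service of customer $-m$ alone drives the busy period containing $-m$ out to time $\approx T_{-m} + y/(1-\rho) \approx -m\alpha + y/(1-\rho)$, because fresh work arriving at rate $\rho$ during the service of length $y$ inflates the overall burst to length $y/(1-\rho)$. Demanding that this end-time exceed $x$ amounts to exactly $y > (x+m\alpha)(1-\rho)$. I would make this rigorous by (a) strengthening $A_m$ to $A_m^\eta := \{{\st}_{-m} > (1+\eta)(x+m\alpha)(1-\rho)\}$ with a small safety buffer $\eta > 0$; (b) controlling $T_{-m} \approx -m\alpha$ via the Chernoff bounds already developed in the proof of Proposition~\ref{prop:bpres}; (c) applying Proposition~\ref{prop:bp} to the residual busy period driven by the work accumulated during $-m$'s service to show that $\brp > x$ occurs with probability $1-o(1)$ on $A_m^\eta$; and (d) letting $\eta \downarrow 0$, using that ${\st} \in \mathcal{L}\cap\mathcal{D}$ forces $\sum_m \PP(A_m^\eta) \sim \sum_m \PP(A_m)$ through the intermediate regular variation of $\sfw$ recorded in Appendix~\ref{app:classes}. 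The principal obstacle is step (c): the post-service continuation of the busy period has the correct mean $\rho y/(1-\rho)$ but genuine random fluctuation, so a one-sided lower-deviation bound is needed to rule out that the BP ends before time $x$; the buffer $\eta$ in (a) creates precisely the margin to absorb this fluctuation, and the $\mathcal{L}\cap\mathcal{D}$ hypothesis is exactly what permits the margin to be wiped out in (d).
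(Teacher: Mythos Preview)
Your reduction and the Bonferroni step are exactly what the paper does, and your steps (a), (b), (d) match the paper's strategy precisely (the paper's event $E^{\epsilon,R}_{m,1}(x)$ is your $A_m^\eta$, and the clean-up at the end uses $\sfw \in \mathcal{IRV}$ just as you anticipate).

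The one place where your plan and the paper diverge is the implementation of step~(c). Proposition~\ref{prop:bp} is a \emph{tail} asymptotic for the busy period and does not by itself yield the lower-deviation statement you need (that on $A_m^\eta$ the residual busy period exceeds $x$ with probability close to~1). You acknowledge this, but leave the source of the lower-deviation bound unspecified. The paper avoids Proposition~\ref{prop:bp} entirely here and instead introduces three additional SLLN-type events
\[
E^{\epsilon,R}_{m,2} = \Bigl\{\textstyle\sum_{i=1}^n B_{-m+i} \ge n\beta(1-\epsilon)-R \ \forall n\Bigr\}, \quad
E^{\epsilon,R}_{m,3},\ E^{\epsilon,R}_{4} \ \text{(analogous controls on inter-arrivals)},
\]
on whose intersection with $E^{\epsilon,R}_{m,1}(x)$ one checks \emph{deterministically} that the workload stays positive from customer $-m$ out past time $x$, so $\brp>x$ holds outright. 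The probabilities of $E_{m,2}$, $E_{m,3}$, $E_4$ are each at least $1-\delta$ for suitable $\epsilon,R$ by the SLLN, and since $E_{m,1}$ is independent of the other three, the product structure gives exactly the $(1-\delta)^2\sum_m \PP(E_{m,1})$ lower bound you want. This is both simpler and more robust than routing through Proposition~\ref{prop:bp}: no separate lower-deviation lemma is needed, and the argument is entirely sample-path.
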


\begin{proof}
First we show that~(\ref{eq:bpsum}) implies~(\ref{eq:bpunion}).
Note that
\bearno
    &&\PP (\brp>x, \bigcup_{m=1}^{\infty}\left\{ {\st}_{-m} > (x+m\alpha) (1-\rho )\right\})\\
    &&\leq
    \sum_{m=1}^{\infty}
    \PP (\brp>x, {\st}_{-m} > (x+m\alpha) (1-\rho ))
    \\
    && \leq
    \PP (\brp>x, \bigcup_{m=1}^{\infty}\left\{ {\st}_{-m} > (x+m\alpha) (1-\rho )\right\})\\
    &&+   \sum_{m_1\neq m_2} 
\PP ({\st}_{-m_1} > (x+m_1\alpha) (1-\rho ),
    {\st}_{-m_2} > (x+m_2\alpha) (1-\rho )),
\enarno
where the last sum is not bigger than
\bearno
%    &&\PP (\brp>x, \bigcup_{m_1\ne m_2}
%\left\{ {\st}_{-m_1} > (x+m_1\alpha) (1-\rho ),
%    {\st}_{-m_2} > (x+m_2\alpha) (1-\rho )\right\})
%    \\
    &&%\leq
    \sum_{m_1=1}^\infty\sum_{m_2=1}^\infty\PP ( {\st}_{-m_1} > 
(x+m_1\alpha )(1-\rho )) \PP (
    {\st}_{-m_2} > (x+m_2\alpha) (1-\rho ))
    \\
      &  \sim &
    \left(\rho \PP(\sfw>x)\right)^2
    = o(\PP(\sfw>x)).
\enarno
Using Proposition~\ref{prop:bpres} this proves 
that~(\ref{eq:bpsum}) implies~(\ref{eq:bpunion}).

It remains to show that the right-hand side 
of~(\ref{eq:bpsum}) matches~(\ref{Bres}).
As before, we use $t_n$ to denote the inter-arrival time of customer $n$ and customer $n+1$.
Assume that for some constants $\epsilon>0$ and $R>0$ the following events occur
\begin{enumerate}
\item
$E^{\epsilon,R}_{m,1}(x):=\left\{\st_{-m} > (x+R)\frac{1-\rho+\epsilon(1+\rho)}{1-\epsilon}+m\alpha(1-\rho) +\epsilon m\alpha(1+\rho) 
+ (1+\epsilon)\alpha + 2R\right\}$;
\item
$E^{\epsilon,R}_{m,2}:=\left\{\mbox{for all } n\geq1: \sum_{i=1}^n \st_{-m+i}\geq n\beta(1-\epsilon)- R\right\}$;
\item
$E^{\epsilon,R}_{m,3}:=\left\{\mbox{for all } n\geq1: \sum_{i=1}^n t_{-m+i}\leq n\alpha(1+\epsilon)+ R\right\}$;
\item
$E^{\epsilon,R}_{4}:=\left\{\mbox{for all } n\geq1: \sum_{i=1}^n t_{i}\geq n\alpha(1-\epsilon)- R\right\}$;
\end{enumerate}
then $V_n$ -- the amount of work seen upon arrival by customer $n$ -- satisfies, for $n>-m$,
\[
    V_n\geq \sum_{i=-m}^{n-1}(B_i-t_{i+1}) \geq (x+R)\frac{1-\rho+\epsilon(1+\rho)}{1-\epsilon}
-(n-1)\alpha(1-\rho+\epsilon(1+\rho)).
\]
Therefore all customers $n$ with
\[
    n-1<\frac{x+R}{ \alpha(1-\epsilon)},
\]
are in the same busy period, so that
\[
    \brp>\sum_{i=1}^nt_i\geq x.
\]
We thus have
\bearno
    \sum_{m=1}^\infty\PP\left(E^{\epsilon,R}_{m,1}(x)\cap E^{\epsilon,R}_{m,2}\cap E^{\epsilon,R}_{m,3}\cap 
E^{\epsilon,R}_{4}\right)
    &\leq&
    \sum_{m=1}^\infty\PP\left(\brp>x,B>(x+m\alpha)(1-\rho)\right)
    \\
    &\leq&
    \sum_{m=1}^\infty\PP\left(B>(x+m\alpha)(1-\rho)\right),
\enarno
and we are done if a lower bound for 
$\sum_{m=1}^\infty\PP\left(E^{\epsilon,R}_{m,1}(x)\cap E^{\epsilon,R}_{m,2}\cap E^{\epsilon,R}_{m,3}\cap 
E^{\epsilon,R}_{4}\right)$ 
is shown to be arbitrarily close to $(1+o(1))\frac{\rho}{1-\rho}\PP\left(\sfw>x(1-\rho)\right)$, as $x\rightarrow\infty$.
For any fixed $\delta>0$ we can find (by the strong law of large numbers) $\epsilon$ and $R$ such that
$\PP\left(E^{\epsilon,R}_{m,2}\right)\geq1-\delta$ and 
$\PP\left(E^{\epsilon,R}_{m,3}\cap E^{\epsilon,R}_{4}\right)\geq1-\delta$.
Thus, as $x\rightarrow\infty$,
\bearno
    &&\sum_{m=1}^\infty\PP\left(E^{\epsilon,R}_{m,1}(x)\cap E^{\epsilon,R}_{m,2}\cap E^{\epsilon,R}_{m,3}\cap        
    E^{\epsilon,R}_{4}\right)
    \,=\,\sum_{m=1}^\infty\PP\left(E^{\epsilon,R}_{m,1}(x)\right)\PP\left(E^{\epsilon,R}_{m,2}
    \right)\PP\left( E^{\epsilon,R}_{m,3}\cap     E^{\epsilon,R}_{4}\right)
    \\
    &&\geq
    (1-\delta)^2
    \sum_{m=1}^\infty\PP\left(E^{\epsilon,R}_{m,1}(x)\right)
    \\
    &&\sim\frac{(1-\delta)^2\rho}{1-\rho +\epsilon (1+\rho)}
    \PP\left(\sfw>(x+R)\frac{1-\rho+\epsilon(1+\rho)}{1-\epsilon} + (1+\epsilon)\alpha + 2R\right)
    \\
    &&\sim
    \frac{(1-\delta)^2\rho}{1-\rho +\epsilon (1+\rho)}
    \PP\left(\sfw>x\frac{1-\rho+\epsilon(1+\rho)}{1-\epsilon}\right),
\enarno
where we have used $\sfw\in{\cal L}$.
Now, first letting $\epsilon\rightarrow0$, using $\sfw\in{\cal IRV}$
(Property~(\ref{7.5}) in Appendix~\ref{app:classes}), and then $\delta\rightarrow0$ the
proof is completed.
\end{proof}

\begin{remark}
Expression~(\ref{eq:bpsum}) shows that the occurrence of a large residual busy period is due to a single large service time 
{\em in the past}.
This can be explained as follows.
The busy period is the sum of services of the customers in that busy period.
The number of customers in the busy period after time 0 (the point of arrival) is almost surely finite.
There are, however, infinitely many service times in the past, each of them being potentially large.
This leads to the integrated tail of the service time distribution.

\end{remark}

Besides the busy period and the residual busy period, there is a third entity whose distribution is the same for all non-idling and non-preemptive service disciplines:~$\srp$, the residual
service requirement of the customer in service (if any) upon arrival
of a new customer.
The tail asymptotics for the distribution of~$\srp$ are determined in the following theorem. 
Not only is the theorem of interest in itself, but several steps in its proof will also be
useful in proving our main result in Theorem~\ref{thm:main}.

\begin{theorem}
\label{thm:srp}
If $\st\in{\cal L}\bigcap{\cal D}$ then, for any non-preemptive and
non-idling service discipline,
\[
    \PP\left(\srp>x\right)
    \sim
    \rho \PP\left(\sfw>x\right) ,
\]
as $x\to\infty$.
\end{theorem}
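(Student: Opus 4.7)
Plan.

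I would mimic the two-sided asymptotic strategy used in Proposition~\ref{prop:bpres} and Corollary~\ref{cor:bpres}. Let $A$ and $B^{*}$ denote the elapsed age and the total service requirement of the customer in service at time $0$ (both set to zero if the server is idle); then $\srp = B^{*} - A$ on the event that the server is busy, so $\PP(\srp > x) = \PP(\text{busy},\, B^{*} > A + x)$. Because the distribution of $\srp$ coincides across all non-preemptive non-idling disciplines (as noted in the paragraph preceding the theorem), I may assume FCFS, which makes $A$ and $B^{*}$ functionals of the workload trajectory and simplifies bookkeeping.

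\emph{Upper bound.} I would establish, under the Palm measure of customer $0$, the asymptotic envelope
\[
    \PP(\text{busy},\, A \in du,\, B^{*} > y) \;\le\; \bigl(1 + o(1)\bigr)\, \frac{1}{\alpha}\,\PP(\st > y)\, du, \qquad y \geq u,\ y \to \infty,
\]
which is the Palm counterpart of the time-stationary identity $\frac{1}{\alpha}\PP(\st > y)\,du$. Its proof, for $\st \in \mathcal{L}\cap\mathcal{D}$, follows the template of Proposition~\ref{prop:bpres}: decompose according to the index $-m$ of the customer currently in service, use Proposition~\ref{prop:bp} to bound the probability that customer $-m$ belongs to a busy cycle still in progress, and apply the Chernoff bounds (\ref{addi})--(\ref{eq:chernoff1}) on partial sums of inter-arrival times to absorb the randomness of the arrival process up to an exponentially small error. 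Integrating the envelope against $y = u + x$ yields
\[
    \PP(\srp > x) \;\le\; \bigl(1 + o(1)\bigr)\, \frac{1}{\alpha} \int_{0}^{\infty} \PP(\st > u + x)\, du \;=\; \bigl(1 + o(1)\bigr)\, \rho \,\PP(\sfw > x).
\]

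\emph{Lower bound.} In the spirit of Corollary~\ref{cor:bpres}, for each $m \ge 1$ and each $u \ge 0$ I would construct typical events $E_{m,u}(x)$ on which: (i) $\st_{-m}$ exceeds $u + x$ (with the usual $\varepsilon$-perturbations), (ii) the inter-arrival times and the other service times satisfy SLLN-type inequalities that, together with (i), force customer $-m$ to be the customer in service at time $0$ with elapsed age close to $u$, and (iii) consequently $\srp > x$. Each such event factors up to a factor $1 - O(\delta)$ into $\PP(\st_{-m} > u + x)$ times a large SLLN probability, so that integrating in $u$ and summing in $m$ furnishes the matching lower bound $(1 - o(1))\, \rho\, \PP(\sfw > x)$.

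\emph{Main obstacle.} The principal technical hurdle is precisely the uniformity of the Palm upper envelope for $(A,B^{*})$ in the regime $y - u \sim x$: the discrepancy between the Palm distribution of service ages and the time-stationary one must be shown to be of strictly smaller order than $\PP(\st > y)$. This is where the condition $\st \in \mathcal{L}\cap\mathcal{D}$ is indispensable: just as in (\ref{eq:noexpdecay}), it rules out any exponentially small correction from surviving against the heavy tail of $\st$, so that the Chernoff estimates that handle the GI-arrival fluctuations cost nothing in the final asymptotics. This is also what prevents the appearance of a spurious $1/(1-\rho)$ factor (as in Corollary~\ref{cor:bpres}): unlike a large residual busy period, a large residual service requires only a single large service to be currently in progress, with no accompanying workload drift of order $x(1-\rho)^{-1}$.
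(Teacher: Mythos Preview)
Your lower bound is close to the paper's and would go through. The upper bound is where you diverge, and there the sketch has a real gap. Under FCFS, if customer $-m$ is the one in service at time~$0$ then $A=T_{-m}-V_{-m}$ and $B^{*}=\st_{-m}$, so for $y\ge u$ your envelope factors exactly as $\PP(\st>y)\,dG(u)$ with $dG(u)=\sum_m\PP(T_{-m}-V_{-m}\in du,\,V_{-m}\le T_{-m})$; what you actually need is $\int_0^\infty\PP(\st>u+x)\,dG(u)\le(1+o(1))\,\alpha^{-1}\int_0^\infty\PP(\st>u+x)\,du$. Neither tool you cite delivers this. Proposition~\ref{prop:bp} controls $\PP(\tau>m)$, i.e.\ the probability that customer $-m$ lies in the \emph{same busy cycle} as customer~$0$, not that customer $-m$ is \emph{in service} at time~$0$; using the former as a proxy overshoots by a factor of order $\EE\tau$, exactly the spurious $1/(1-\rho)$ you say you want to avoid. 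The Chernoff bounds handle fluctuations of $T_{-m}$ around $m\alpha$ but say nothing about $V_{-m}$, whose tail is itself heavy; on $\{V_{-m}>K\}$ the age $A$ can be small, so $\{\srp>x\}$ degenerates to $\{\st_{-m}>x+o(x)\}$, and your plan offers no mechanism to show this piece is negligible.

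The paper's upper bound avoids the envelope altogether by decomposing not by who is in service but by who carries the single large service. It first uses $\srp\le\brp$ together with Corollary~\ref{cor:bpres} to write $\PP(\srp>x)\sim\sum_m\PP(\srp>x,\,\st_{-m}>(x+m\alpha)(1-\rho))$, and only then splits each summand according to whether customer $-m$ is the one currently being served. Four cases arise: (i) $V_{-m}\le K$ and $\st_{-m}>x+T_{-m}-K$, the main term, summing to $\rho\,\PP(\sfw>x)$; (ii) $V_{-m}>K$, bounded by $\varepsilon\,\PP(\st>(x+m\alpha)(1-\rho))$, which \emph{is} summable in $m$ precisely because the big service has already been pinned down; (iii) the busy period containing customer $-m$ ends before time~$0$, handled by regeneration and a factor $\PP(\brp>x)\to0$; (iv) customer $-m$'s service has ended but the busy period continues, so $\{\srp>x\}$ forces a renewal overshoot $\chi(y)>x$ of the partial sums of service times, and $\sup_{y}\PP(\chi(y)>x)\to0$ by elementary renewal theory. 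The overshoot argument in~(iv), together with the ``localize the big service first'' trick that rescues summability in~(ii), are the ingredients your proposal is missing.
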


\begin{proof}
See Appendix~\ref{app:srp}.
\end{proof}

\begin{corollary}
\label{cor:non-idling}
If ${\st}\in {\cal L} \bigcap {\cal D}$, then for any non-preemptive and non-idling service discipline, the waiting time 
$W$ and residual busy period $\bp^{rp}$ seen by a customer arriving to a stationary $GI/G/1$ queue satisfy $W\leq \brp$ 
a.s.~and therefore, as $x\to\infty$,
\begin{equation} \label{any}
\PP (W>x)
\sim
\sum_{m=1}^{\infty}
\PP \left(
W>x, {\st}_{-m} > (
x+m\alpha)
(1-\rho ) \right),
\qquad
x\rightarrow\infty.
\end{equation}
\end{corollary}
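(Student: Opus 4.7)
The plan is to handle the two claims in succession. The pathwise inequality $W\le\brp$ is immediate from non-preemption and non-idling: the tagged customer must begin (and hence complete) its service before the busy period in which it arrives ends, so its waiting time cannot exceed the residual length of that busy period (and if it arrives to an empty system both quantities are zero).

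For the asymptotic in~(\ref{any}), set $A_m(x):=\{\st_{-m}>(x+m\alpha)(1-\rho)\}$ and $S(x):=\sum_{m=1}^\infty\PP(W>x,A_m(x))$; the goal is to sandwich $S(x)$ between $\PP(W>x)\pm o(\PP(\sfw>x))$. By a Bonferroni-type bound applied to the events $\{W>x\}\cap A_m(x)$,
\[
    S(x)\le \PP\left(W>x,\bigcup_{m\ge 1} A_m(x)\right)
    +\sum_{m_1\ne m_2}\PP\left(A_{m_1}(x)\cap A_{m_2}(x)\right)
    \le \PP(W>x)+o(\PP(\sfw>x)),
\]
where the bound on the double sum uses independence of the past service times together with the first asymptotic in Proposition~\ref{prop:bpres}, exactly as in the first half of the proof of Corollary~\ref{cor:bpres}. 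For the matching lower bound, the union bound yields $S(x)\ge\PP(W>x,\bigcup_m A_m(x))$, and invoking $W\le\brp$ on the complementary piece gives
\[
    \PP\left(W>x,\bigcap_{m\ge 1}A_m(x)^c\right)\le
    \PP\left(\brp>x,\bigcap_{m\ge 1}A_m(x)^c\right)
    = o(\PP(\sfw>x))
\]
by~(\ref{eq:bpunion}) in Corollary~\ref{cor:bpres}, so $S(x)\ge\PP(W>x)-o(\PP(\sfw>x))$.

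The remaining task is to show $\PP(\sfw>x)=O(\PP(W>x))$, so that the two $o(\cdot)$ errors above are genuinely negligible relative to $\PP(W>x)$. Non-preemption gives $W\ge\srp$ almost surely (the tagged customer must wait at least for the completion of any service it finds in progress), hence by Theorem~\ref{thm:srp} $\PP(W>x)\ge\PP(\srp>x)\sim\rho\PP(\sfw>x)$. Since $\sfw\in\cal{IRV}$ by Property~(\ref{7.5}) and therefore belongs to $\cal{D}$, $\PP(\sfw>x)$ and $\PP(\sfw>x(1-\rho))$ differ by only a bounded multiplicative factor, so $o(\PP(\sfw>x))=o(\PP(W>x))$ and~(\ref{any}) follows. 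The main obstacle is exactly this last step: without the lower bound on $\PP(W>x)$ furnished by Theorem~\ref{thm:srp}, the error terms could in principle swamp the main term for disciplines whose waiting-time tail is substantially lighter than the busy-period tail, which is why the corollary is positioned directly after that theorem.
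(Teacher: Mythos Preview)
Your proof is correct and follows essentially the same route as the paper: you use $W\ge\srp$ together with Theorem~\ref{thm:srp} to justify discarding $o(\PP(\sfw>x))$ terms, invoke $W\le\brp$ together with Corollary~\ref{cor:bpres} to control the complementary event, and handle the passage between the sum and the union via the pairwise-intersection bound from the proof of Corollary~\ref{cor:bpres}. The only cosmetic difference is organizational---you sandwich $S(x)$ by upper and lower bounds, whereas the paper chains asymptotic equivalences starting from $\PP(W>x)=\PP(W>x,\brp>x)$---but the ingredients and logic are identical.
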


\begin{proof}
Since the service discipline is non-preemptive we have~$W\geq \srp$ almost surely, so that by Theorem~\ref{thm:srp},
\[
    \PP\left(W>x\right)
    \geq
    \PP\left(\srp>x\right)
    \sim
    \rho \PP\left(\sfw>x\right),
    \qquad x\to\infty.
\]
Thus, in the following, we may neglect terms which are $o\left(\PP\left(\sfw>x\right)\right)$.
Using $W\leq \brp$ (almost surely) and Corollary~\ref{cor:bpres} we therefore have (similar to the proof of Theorem~\ref{thm:srp})
\bearno
    \PP(W>x)
    &=& \PP\left(W>x, \brp>x\right)
    \\
    &\sim & 
    \PP (W>x, \brp>x, \bigcup_{m=1}^{\infty}\left\{ {\st}_{-m} > (x+m\alpha) (1-\rho )\right\})
    \\
    &\sim & 
    \sum_{m=1}^{\infty}
    \PP (W>x,\brp>x,{\st}_{-m} > (x+m\alpha) (1-\rho ))
    \\
    &=&
    \sum_{m=1}^{\infty}
    \PP (W>x,{\st}_{-m} > (x+m\alpha) (1-\rho ))
.
\enarno
\end{proof}

\section{Random Order of Service}
\label{global}

We now turn to the $GI/GI/1$ queue with Random Order of Service. We
start with analyzing the waiting time \emph{conditional} on the
initial queue length $q$, none of these customers having received
previous service. It is convenient to associate service times with
customers in their order of service instead of their order of arrival.
The customer which is served first has a service time $\st_1$, the
second has $\st_2$, etc. Denote with $W\ROS(q)$ the conditional waiting
time of an arbitrary customer in the queue.

The following lemma does not require any assumptions on the
distributions of service times and inter-arrival times. It shows that
$W\ROS(q)/q$ converges in distribution to a random variable whose
distribution has support $[0,\frac{\beta}{1-\rho}]$. Note that this
contrasts with the FCFS queue, in which case the corresponding quantity $W(q)/q$ (for the last
customer in line) converges to the constant $\beta$ almost surely.

\begin{lemma}
\label{lem:wq}
As $q\to\infty$,
\begin{equation} \label{as1}
\PP
\left(
W\ROS(q) > \frac{\beta q}{1-\rho}y \right)
\rightarrow
\left(
(1-y)^+ \right)^{\frac{1}{1-\rho}},
\end{equation}
uniformly in $y\in [0,\infty )$.
\end{lemma}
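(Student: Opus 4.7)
The plan is to reduce the problem to the distribution of the service-order position $\tau_q$ at which the tagged customer is picked, and then to combine the explicit form of its conditional survival probability with a fluid limit for the queue-length process.

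Let $N_k$ be the number of customers present just before the $k$-th service begins (so $N_1=q$), let $T_k=\st_1+\cdots+\st_{k-1}$ be the start time of the $k$-th service, and let $\tau_q$ be the (random) position at which the tagged customer is served, so that $W\ROS(q)=T_{\tau_q}$. Since service times are assigned by service position rather than by customer identity, the queue-length trajectory $(N_j)_{j\ge 1}$ is independent of the ROS selections. By symmetry between the initial customers, given $(N_j)$ and survival through step $j-1$, the tagged customer is picked at step $j$ with probability $1/N_j$, whence
\[
\PP\bigl(\tau_q>k \bigm| (N_j)_{j\ge 1}\bigr)=\prod_{j=1}^{k}\Bigl(1-\frac{1}{N_j}\Bigr).
\]
The classical LLN applied to $N_k=q-(k-1)+\hat N(T_k)$, where $\hat N$ is the arrival counting process, together with $T_k/k\to\beta$ a.s.\ and $\hat N(t)/t\to 1/\alpha$ a.s., yields the fluid limit $N_{\lfloor qs\rfloor}/q\to 1-(1-\rho)s$ in probability, uniformly on compact subsets of $[0,1/(1-\rho))$.

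On this fluid scale, a Riemann-sum estimate gives $\sum_{j=1}^{\lfloor qs\rfloor}1/N_j\xrightarrow{\PP}-\log(1-(1-\rho)s)/(1-\rho)$, while the quadratic remainder in $\log(1-1/N_j)=-1/N_j+O(1/N_j^2)$ is of order $O(1/q)$. Combined with bounded convergence (the product lies in $[0,1]$), this yields $\PP(\tau_q>\lfloor qs\rfloor)\to(1-(1-\rho)s)^{1/(1-\rho)}$. Substituting $s=y/(1-\rho)$ and invoking the identity $W\ROS(q)=T_{\tau_q}$ together with the maximal SLLN $\max_{k\le Cq}|T_k-k\beta|=o(q)$ a.s.\ (valid because $\st$ has finite mean), one transfers the convergence of $\tau_q/q$ to $W\ROS(q)/q$ and obtains the pointwise statement of (\ref{as1}). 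Uniformity in $y\in[0,\infty)$ then follows from Polya's theorem: both sides of (\ref{as1}) are non-increasing in $y$ and the limit $((1-y)^+)^{1/(1-\rho)}$ is continuous on $[0,\infty)$, so pointwise convergence forces uniform convergence on every compact interval, and the tail $y\ge 1$ is handled by monotonicity since the limit vanishes there.

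The main technical obstacle is controlling $\prod_{j=1}^{k}(1-1/N_j)$ as $y\uparrow 1$, i.e.\ as $k$ approaches $q/(1-\rho)$: the fluid curve $1-(1-\rho)s$ reaches zero and the terms $1/N_j$ cease to be uniformly small, so the Taylor expansion of the logarithm breaks down. I would handle this by introducing a cutoff $s_0$ slightly below $1/(1-\rho)$: on $[1,\lfloor qs_0\rfloor]$ the expansion is valid and gives the target limit, while on the remaining range the product is bounded above by $(1-(1-\rho)s_0)^{1/(1-\rho)}+o(1)$, which converges to $(1-y)^{1/(1-\rho)}$ upon letting $s_0\uparrow 1/(1-\rho)$. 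A secondary subtlety is the coupling between $\tau_q$ and the partial sums $T_k$: since $\tau_q\to\infty$ in probability whenever $y>0$, the maximal SLLN suffices to replace $T_{\tau_q}$ by $\beta\tau_q$ with error $o(q)$, while the case $y=0$ is trivial because $\PP(W\ROS(q)>0)\to 1$.
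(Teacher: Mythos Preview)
Your proposal is correct and follows essentially the same approach as the paper. Both proofs rest on the same three ingredients: the product formula $\PP(\tau_q>k\mid (N_j))=\prod_{j=1}^{k}(1-1/N_j)$ (enabled by assigning service times by service order), the fluid limits $N_{\lfloor qs\rfloor}/q\to 1-(1-\rho)s$ and $T_k/k\to\beta$ from the law of large numbers, and the reduction of uniform to pointwise convergence via monotonicity and continuity of the limit. The only organizational difference is that you first establish the limit of $\tau_q/q$ and then transfer it to $W\ROS(q)=T_{\tau_q}$ via the maximal SLLN, whereas the paper bounds $\PP(W\ROS(q)>\tfrac{\beta q}{1-\rho}y)$ directly by working on a good event $A_1\cap A_2$ on which the queue-length process and the service-completion counting process simultaneously stay inside explicit $\varepsilon$-bands around their fluid limits; this merges your two transfer steps into a single sandwich. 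Your discussion of the ``main technical obstacle'' near $y\uparrow 1$ is somewhat overcautious: for pointwise convergence at any fixed $y<1$ the fluid curve stays bounded away from zero, so the Riemann-sum and quadratic-remainder estimates go through without a cutoff, and $y\ge 1$ is then handled purely by monotonicity, exactly as you note.
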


\begin{proof}
Since the limiting distribution is continuous and non-defective, by the monotonicity of probability distribution functions, it is sufficient to prove point-wise convergence.

Let us thus fix $y\in(0,1)$.
For $n=1,2,\ldots$, denote by $Q_n$ the number of customers in the
queue at the time instant of the $n$th service completion.
We define the event $A_1$ by
\[
A_1 :=
\left\{
Q_i \in [q(1-\varepsilon )
- i(1-\rho + \varepsilon ),
q(1+\varepsilon ) - i(1-\rho - \varepsilon )]
\quad \forall ~ i=1,2,\ldots ,\frac{q}{1-\rho }
\right\}.
\]
By the strong law of large numbers, for any $\varepsilon >0$, there exists $\tilde{q} 
\equiv \tilde{q}(\varepsilon )$ such that
$
\PP(A_1) \geq 
1-\varepsilon
$
for all $q\geq \tilde{q}$. 

Let $q_i^- =  q(1-\varepsilon )
- i(1-\rho + \varepsilon )$ and $q_i^+
= q(1+\varepsilon ) - i(1-\rho - \varepsilon )$ and denote $N(v) = \min \{ n ~:~ \sum_1^n
\st_i >v \}$; customer $N(v)$ is in service at time $v$. 
Defining
\[
A_2 :=
\left\{
N(v) \in \left[ \frac{v}{\beta}(1-\varepsilon )-R,
\frac{v}{\beta}(1+\varepsilon )+R \right] ,\quad
\forall ~ v\in \left[0, \frac{\beta q}{1-\rho}y\right] \right\} ,
\]
for any $\varepsilon >0$, we may choose
$R>0$ such that
$
\PP(A_2)
\geq 1-\varepsilon 
$.
Thus, $\PP (A_1\bigcap A_2) \geq 1-2\varepsilon $ and
$$
\PP \left(
W\ROS(q)>\frac{\beta q}{1-\rho}y
\right)= 
P(y)+ O(\varepsilon ),
$$
where
$$
P(y):=\PP
\left(
W\ROS(q)>\frac{\beta q}{1-\rho}y, A_1\bigcap A_2
\right) .
$$
We further define
$u=\frac{\beta q}{1-\rho}y$,
$n^-(u) = \frac{u}{\beta }(1-\varepsilon )-R$
and 
$n^+(u) = \frac{u}{\beta }(1+\varepsilon )+R$.
Since $\{ W\ROS(q) > \frac{\beta q}{1-\rho}y \}$ implies that customer $0$ was
not selected in the first $N\left( \frac{\beta q}{1-\rho}y\right)$ trials,
we have, as $q$ and $u$ tend to infinity keeping $y$ constant,
\begin{eqnarray*}
P(y) & \leq & \Pi_{i=1}^{n^-(u)}
\left(
1-\frac{1}{q_i^+}\right) \\
&=&
(1+o(1)) \exp \left(
- \sum_{i=1}^{n^-(u)} \frac{1}{q^+_i}\right) \\
&=&
(1+o(1))
\exp
\left(-
\int_0^{n^-(u)} \frac{1}{q(1+\varepsilon ) - v(1-\rho -\varepsilon )}
dv \right) \\
&=&
(1+o(1))
\left(
1-y\frac{(1-\varepsilon )(1-\rho -\varepsilon )}{
(1+\varepsilon )(1-\rho )}
\right)^{\frac{1}{1-\rho -\varepsilon }}\\
&=& (1+o(1))
(1-y+O(\varepsilon ))^{\frac{1}{1-\rho -\varepsilon}}.
\end{eqnarray*}
Similarly,
$$
P(y) \geq \Pi_{i=1}^{n^+(u)} \left(
1-\frac{1}{q_i^-}\right) - O(\varepsilon ) =
(1+o(1)) \left(
1-y-O(\varepsilon )
\right)^{\frac{1}{1-\rho-\varepsilon }}
- O(\varepsilon ).
$$
Letting $\varepsilon$ pass to $0$, we obtain~(\ref{as1}).
\end{proof}

The main result of our paper is stated in the next theorem.

\begin{theorem}
\label{thm:main}
In the GI/G/1 ROS queue with $\st\in{\cal L}\bigcap{\cal D}$, we have
\begin{eqnarray*}
\PP(W\ROS>x) &\sim &
\rho \PP(\st^{fw}>x)\\
&+&
\int_{0}^{cx}dv
\int_{(v\alpha+x)(1-\rho )}^{v\alpha+x} d\PP(B\leq z)
\left(
1-\frac{(x+v\alpha-z)(1-\rho )}{\rho z} \right)^{\frac{1}{1-\rho}} \\
&+&
\int_{cx}^{\infty}dv
\int_{v\alpha}^{v\alpha+x} d\PP(B\leq z)
\left(
1-\frac{(x+v\alpha-z)(1-\rho )}{\rho z} \right)^{\frac{1}{1-\rho}} 
\\
&+&
\int_{cx}^{\infty}dv
\int_{(v\alpha+x)(1-\rho )}^{v\alpha} d\PP(B\leq z)
\left(
1-\frac{x(1-\rho )}{z- v \alpha (1-\rho )} \right)^{\frac{1}{1-\rho}}
,
\end{eqnarray*}
where $c=\frac{1-\rho}{\alpha \rho}$.
\end{theorem}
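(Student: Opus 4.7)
My plan is to build on Corollary~\ref{cor:non-idling}, which expresses $\PP(W\ROS>x)$ asymptotically as
\[
\sum_{m=1}^{\infty}\PP\left(W\ROS>x,\;\st_{-m}>(x+m\alpha)(1-\rho)\right),
\]
reflecting the principle that for large~$x$ the event $\{W\ROS>x\}$ is produced by \emph{exactly one} past service time $\st_{-m}$ being atypically large. I would condition on $\st_{-m}=z$ with $z>(x+m\alpha)(1-\rho)$, write $v$ in place of the discrete index $m$, and estimate the conditional probability via Lemma~\ref{lem:wq} together with a law-of-large-numbers description of the rest of the queue.

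Given $\{\st_{-m}=z\}$, all other service and inter-arrival times behave typically, so up to $o(x)$ corrections the state of the system at the tagged arrival epoch (time~$0$) is a deterministic function of $v$ and~$z$. Two geometrically distinct situations arise. In \emph{Case~A}, customer $-m$ is still being served at time~$0$, which corresponds to $z>v\alpha$: the residual service is then $\approx z-v\alpha$, and by the time that service completes the waiting queue has size $q\approx z/\alpha$. Since under ROS all waiting customers are exchangeable, Lemma~\ref{lem:wq} applied with $\frac{\beta q}{1-\rho}=\frac{\rho z}{1-\rho}$ and required additional wait $x-(z-v\alpha)$ gives
\[
\PP(W\ROS>x\mid \st_{-m}=z)\;\sim\;\left(1-\frac{(x+v\alpha-z)(1-\rho)}{\rho z}\right)^{1/(1-\rho)}.
\]
In \emph{Case~B}, customer $-m$ has already finished before time~$0$ (so $z<v\alpha$); a fluid computation yields a queue size at time~$0$ of $q\approx (z-v\alpha(1-\rho))/\beta$, while the current residual service is $O(1)$ and hence negligible. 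Lemma~\ref{lem:wq} now yields
\[
\PP(W\ROS>x\mid \st_{-m}=z)\;\sim\;\left(1-\frac{x(1-\rho)}{z-v\alpha(1-\rho)}\right)^{1/(1-\rho)}.
\]

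Replacing the sum over $m$ by an integral over $v$ (justified by $\st\in\mathcal{L}$, which makes the service tail slowly varying on the scale~$\alpha$), the Case~A/Case~B dichotomy partitions the relevant $(v,z)$-region. The critical value $v=cx$ with $c=(1-\rho)/(\alpha\rho)$ is precisely where $(v\alpha+x)(1-\rho)=v\alpha$, so for $v\le cx$ only Case~A is possible (yielding the second term of the theorem); for $v>cx$, Case~A with $z\in[v\alpha,v\alpha+x]$ gives the third term and Case~B with $z\in[(v\alpha+x)(1-\rho),v\alpha]$ gives the fourth. The first term, $\rho\PP(\sfw>x)$, emerges by splitting off from Case~A the sub-region $z>v\alpha+x$: there the residual service of customer $-m$ alone exceeds $x$, so the conditional probability equals~$1$, and $\sum_{m\ge 1}\PP(\st>v\alpha+x)$ (or the corresponding integral in~$v$) yields $\rho\PP(\sfw>x)$ by the same calculation that underpins Theorem~\ref{thm:srp}.

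The hard part will be making the fluid/LLN description of the queue-given-one-large-service sufficiently uniform in $(m,z)$ for the accumulated error across the full double sum/integral to remain $o(\PP(\sfw>x))$. The technical tools I would bring in are: (i)~Chernoff-type bounds on $T_m$ and on the partial sums of the other $\st_{-m+i}$, in the spirit of those used in the proof of Proposition~\ref{prop:bpres}, to control rare deviations of the environment; (ii)~a uniform version of Lemma~\ref{lem:wq} that applies when the initial queue length is itself a random function of the environment; and (iii)~an argument in the spirit of Corollary~\ref{cor:bpres} showing that two or more simultaneously large service times in the past contribute only at the negligible order $\PP(\sfw>x)^2$, so that the events indexed by distinct~$m$ may be treated as effectively disjoint.
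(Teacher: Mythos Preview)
Your proposal is essentially the paper's own approach: start from Corollary~\ref{cor:non-idling}, condition on the single large service time $B_{-m}=z$, split according to whether customer $-m$ is still in service at time~$0$ (your Case~A, which the paper treats as its Parts~I and~II) or has already left (your Case~B, the paper's Part~III), use a fluid/LLN description of the remaining environment, and invoke Lemma~\ref{lem:wq} for the conditional probability. Your identification of the four terms, of the critical value $v=cx$, and of the origin of the first term as the sub-region $z>v\alpha+x$ all match the paper exactly.

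One device the paper employs that you may find useful when filling in the details: it first observes that $W\ROS\eqdist W\ROS'$, where $W\ROS'$ is the waiting time of customer~$0$ under the modified discipline that uses FCFS \emph{before} time~$0$ and ROS \emph{after}. This makes the question ``is customer $-m$ in service at time~$0$?'' depend only on $(V_{-m},B_{-m},T_{-m})$ rather than on any random ROS selections in the past, and allows a clean preliminary step showing that the event $\{V_{-m}\geq T_{-m}\}$ (large customer has not yet entered service) contributes only $o(\PP(\sfw>x))$. With that coupling in hand, the $\varepsilon$--$\delta$ sandwiching you anticipate in point~(i) is carried out in the paper using only the SLLN for partial sums of $t_i$ and $B_i$ together with tightness of the stationary workload~$V$; the Chernoff bounds from Proposition~\ref{prop:bpres} are not needed here.
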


\begin{proof}
In Appendix~\ref{app:main}.
\end{proof}

Letting $W\ROS^*$ be a random variable independent of $B$ with the limiting distribution of $W\ROS(q)/q$, as $q\to\infty$, we may conveniently rewrite the above as:
\bearno
\PP(W\ROS>x) &\sim &
\PP(\srp>x)\\
&+&
\int_{0}^{cx}dv
\PP\left((v\alpha+x)(1-\rho )< B\leq v \alpha +x,
W\ROS^*>\frac{\alpha (x+v\alpha-\st)}{\st} \right) \\
&+&
\int_{cx}^{\infty}dv
\PP\left(v\alpha< B\leq v\alpha+x,
W\ROS^*>\frac{\alpha (x+v\alpha-\st)}{\st} \right) 
\\
&+&
\int_{cx}^{\infty}dv
\PP\left((v\alpha+x)(1-\rho )<B\leq v\alpha,
W\ROS^*>\frac{\beta x}{\st- v\alpha(1-\rho )} \right)
.
\enarno
This allows for the following interpretation:
The waiting time is larger than $x$ when one of the following occurs:
\begin{enumerate}
\item
(first term)
The customer in service has a residual service time larger than $x$.
Recall that, by Theorem~\ref{thm:srp},~$\PP\left(B^{rp}>x\right)\sim\rho\PP\left(B^{fw}>x\right)$. 
\item
(second term)
A customer (with index~$-v$) that arrived at some time $-t=-\alpha v$ between time~$-\alpha cx$ and time~$0$ required a service $z$ 
larger than $(t+x)(1-\rho)$ but smaller than $t+x$.
The service times of other customers in the system at time~$-t$ are negligible compared to~$z$.
The large service time ends at time~$-t+z\in(0,x)$, leaving approximately~$z/\alpha$ competing customers in the system.
Customer~0 thus waits for approximately~$-t+z+W\ROS^*z/\alpha$.
Thus~$W\ROS^*$ needs to be larger than~$(x+t-z)\alpha/z$.
\item
(third term)
This term is similar to the previous.
Now, the large customer arrived at time~$-t<-\alpha cx$ with a service requirement~$z\in(t,t+x)$.
That customer thus leaves at time~$-t+z\in(0,x)$ with approximately~$z/\alpha$ customers in the system.
\item
(fourth term)
Again, the large customer arrived at time~$-t<-\alpha 
cx$, but leaves before time~0: its service requirement 
is~$z\in((t+x)(1-\rho),t)$.
Neglecting the size of the customer in service at time~0, the ``service lottery'' starts immediately upon 
arrival of customer~0.
The number of competing customers at time~0 is approximately~$t/\alpha-(t-z)/\beta$ which is the number of arrivals minus the 
number of departures between times~$-t$ and~0.
Therefore, the waiting time of customer~0 is larger than $x$ if~$W\ROS^*$ is larger 
than~$x/(t/\alpha-(t-z)/\beta)=\beta x/(z-t(1-\rho))$.

\end{enumerate}

\section{The $M/G/1$ queue with regularly varying service time distribution}
\label{rvlst}
In this section we restrict ourselves to the case of Poisson arrivals
and regularly varying service time distribution. In this case we are
able to obtain detailed tail asymptotics for the waiting time
distribution by applying Lemma~\ref{lem:tauber} for Laplace-Stieltjes
transforms (LST) of regularly varying distributions to an expression
of Le Gall~\cite{LeGall} for the waiting time LST in the $M/G/1$ queue
with ROS\@. In Section~\ref{rvprob}, we shall present an alternative
approach to the same result, viz., we shall work out the general tail
asymptotics of Section~\ref{global} for this case.

We consider an $M/G/1$ queue with arrival rate $\lambda =1/\alpha$ and service
time distribution $B(\cdot )$ with mean $\beta $ and LST
$\beta \{\cdot \}$. As before, the load of the
queue is $\rho \egaldef \lambda \beta <1$.

The LST of the waiting time distribution for the ROS discipline is
given by (see Le Gall~\cite{LeGall} or Cohen~\cite{Coh:2}, p.~439):
\begin{equation}
\EE  [\e^{-sW\ROS}]  =  
   1-\rho +\rho \beta ^{fw}\{s\}
   -\frac{\rho (1-\rho )}{\beta s}
      \int_{\mu \{s\}}^1\frac{\partial\Phi}{\partial z}(s,z)\Psi(s,z)\d z,
                                                          \label{eq:waitros}
\end{equation}
with
\begin{eqnarray}
\Phi (s,z) & \egaldef  & 
   (1-z)\frac{\beta\{s+\lambda (1-z)\}-\beta\{\lambda (1-z)\}}
             {z-\beta \{\lambda (1-z)\}},
\label{phisz}
\\
\Psi (s,z) & \egaldef  & 
   \exp \left[-\int_z^1
                   \frac{\d y}{y-\beta \{s+\lambda (1-y)\}}\right],
\label{psisz}
\end{eqnarray}
where $\beta^{fw}\{\cdot \}$ is the LST of the forward recurrence time of the
service time: 
\begin{equation}
\beta^{fw}\{s\}\egaldef \frac{1-\beta \{s\}}{\beta s},\label{eq:betar}
\end{equation}
and $\mu\{s\}$ is the LST of the busy period distribution, satisfying
the relation
\begin{equation}\label{eq:mu}
\mu \{s\}-\beta\bigl\{s+\lambda (1-\mu \{s\})\bigr\}=0.
\end{equation}

It is possible to rewrite~(\ref{eq:waitros}) in a simpler form using
integration by parts. Indeed
\[ 
  \int_{\mu\{s\}}^1\frac{\partial\Phi}{\partial z}(s,z)\Psi(s,z)\d z
  =\Bigl[\Phi(s,z)\Psi(s,z)\Bigr]_{\mu\{s\}}^1
   -\int_{\mu\{s\}}^1\frac{\Phi(s,z)\Psi(s,z)\d z}{z-\beta\{s+\lambda(1-z)\}},
\] 
and the simple relations
\[
  \Phi(s,\mu(s)) = 1-\mu\{s\},
  \quad\Phi (s,1)=\frac{1-\beta\{s\}}{1-\rho},
\]
\[
  \Psi (s,\mu\{s\})=0,
  \quad\Psi (s,1)=1,
\]
yield
\[ 
  \Bigl[\Phi(s,z)\Psi(s,z)\Bigr]_{\mu\{s\}}^1=\frac{1-\beta\{s\}}{1-\rho}.
\]

Using this relation and~(\ref{eq:betar}) in~(\ref{eq:waitros})
yields the following simpler alternative:
\begin{equation}
\EE [\e^{-sW\ROS}] 
   =  1+\frac{\rho(1-\rho)}{\beta s}
            \int_{\mu\{s\}}^1\widehat{\Phi}(s,z)\Psi(s,z)\d z,
                                               \label{eq:waitros2}
\end{equation}
with
\begin{eqnarray}
\widehat{\Phi}(s,z) 
   & \egaldef& \frac{\Phi(s,z)-\frac{\beta s}{1-\rho}}
                   {z-\beta\{s+\lambda(1-z)\}}\nonumber\\
 & = & \frac{1-z-\frac{\beta s}{1-\rho}}{z-\beta\{s+\lambda(1-z)\}}
       -\frac{1-z}{z-\beta\{\lambda(1-z)\}}.\nonumber
\end{eqnarray}

As before,
we write $f(x) \sim g(x)$ if
$f(x)/g(x) \rightarrow 1$ when $x \rightarrow \infty$,
and similarly we write $a(s) \sim b(s)$ if $a(s)/b(s) \rightarrow 1$
when $s \rightarrow 0$.
In this section
and the next 
we assume that the service requirement distribution
$B(\cdot)$ is regularly varying with index $-\nu$, $1<\nu<2$:
\begin{equation}
\PP(B>x) \sim \frac{C}{\Gamma(1-\nu)} x^{-\nu} L(x), ~~~ x \rightarrow \infty ,
\label{BRV}
\end{equation}
with $C$ a constant, $\Gamma(\cdot)$ the Gamma function and $L(\cdot)$
a slowly varying function at infinity, cf.~\cite{BGT87}.

Lemma~\ref{lem:tauber} in Appendix~\ref{app:classes} implies, in
combination with the assumption that~(\ref{BRV}) holds with $\nu \in
(1,2)$:
\begin{equation}
\beta\{s\}-1+\beta s\sim C s^{\nu}L(1/s),
  \textrm{ as }s\to0.\label{eq:beta:tail}
\end{equation}
In addition,
cf.\ (\ref{eq:betar}),
\begin{equation}
1-\beta^{fw}\{s\}\sim\frac{C}{\beta}s^{\nu-1}L(1/s),\textrm{ as }s\to0.
\label{betares}
\end{equation}
De Meyer and
Teugels~\cite{MeyTeu:1} have proven that the busy period distribution
of an $M/G/1$ queue with regularly varying service time distribution
is also regularly varying at infinity. More precisely:
\begin{equation}
\mu\{s\}-1+\frac{\beta s}{1-\rho}
  \sim \frac{C}{(1-\rho)^{\nu+1}}s^{\nu}L(1/s),
\textrm{ as }s\to0.\label{eq:mu:tail}
\end{equation}
We shall use the first-order behavior of $\mu\{s\}$ further on.

% \section{Tail asymptotics of the waiting time distribution}

Our goal is to prove the following theorem.

\begin{theorem}\label{thm:mg1}
If the service time distribution in the $M/G/1$ queue
operating under the ROS discipline is regularly varying at infinity
with index $-\nu \in (-2,-1)$, then
the waiting time distribution is regularly varying at infinity
with index $1-\nu \in (-1,0)$.
More precisely, if (\ref{BRV}) holds then, as $x\to\infty$,
\begin{eqnarray*}
\PP(W\ROS>x) 
  &\sim& \frac{\rho}{1-\rho}h(\rho, \nu)\PP(B^{fw} > x)\\
  &\sim& \frac{\rho}{1-\rho}h(\rho, \nu)
         \frac{1}{\Gamma(2-\nu)}\frac{C}{\beta}x^{1-\nu} L(x),
\end{eqnarray*}
with
\begin{eqnarray}
h(\rho, \nu) &\egaldef& \int_0^1f(u,\rho,\nu)\d u , \label{311}\\ 
f(u,\rho,\nu) 
   &\egaldef& \frac{\rho}{1-\rho} \left(\frac{\rho u}{1-\rho}\right)^{\nu-1}
              (1-u)^{\frac{1}{1-\rho}}
              +\left(1+\frac{\rho u}{1-\rho}\right)^{\nu}
              (1-u)^{\frac{1}{1-\rho}-1}.
\label{312}
\end{eqnarray}
\end{theorem}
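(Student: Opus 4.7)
The plan is to extract the leading-order $s^{\nu-1}L(1/s)$ behaviour of $1-\EE[\e^{-sW\ROS}]$ as $s\downarrow 0$ from the simplified LST~(\ref{eq:waitros2}), and then apply Lemma~\ref{lem:tauber} (Bingham--Doney) to convert this singularity into the regularly varying tail of $W\ROS$. Combined with~(\ref{betares}) the prefactor will be identified as $\frac{\rho}{1-\rho}h(\rho,\nu)$ times the tail of $B^{fw}$. The first technical step is the substitution $z = 1-\frac{\beta s}{1-\rho}u$: by~(\ref{eq:mu:tail}) the new lower limit $u^{\ast}(s) := (1-\mu\{s\})(1-\rho)/(\beta s)$ tends to $1$, and the Jacobian cancels the $\frac{\rho(1-\rho)}{\beta s}$ prefactor, reducing~(\ref{eq:waitros2}) to
\[
\EE[\e^{-sW\ROS}]-1 = \rho\int_{0}^{u^{\ast}(s)}\widehat{\Phi}(s,z(u))\,\Psi(s,z(u))\,\d u.
\]

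Next, using~(\ref{eq:beta:tail}), I would expand the key denominators: $z-\beta\{s+\lambda(1-z)\} \sim \beta s(1-u)$ and $z-\beta\{\lambda(1-z)\}\sim -\beta s u$. This makes the integrand in the definition of $\Psi$ explicit at leading order, giving $\Psi(s,z(u))\to (1-u)^{1/(1-\rho)}$, while the two fractions making up $\widehat{\Phi}$ individually tend to $-\frac{1}{1-\rho}$ and their difference is governed by the regularly varying corrections in these denominators:
\[
\widehat{\Phi}(s,z(u)) \sim -\frac{Cs^{\nu-1}L(1/s)}{\beta(1-\rho)^{\nu+1}}\left[\frac{(1-\rho+\rho u)^{\nu}}{1-u}+\rho^{\nu}u^{\nu-1}\right].
\]
Multiplying the two asymptotics, factoring $(1-\rho)^{\nu}$ out of the bracket to recover $f(u,\rho,\nu)$ from~(\ref{312}), and invoking dominated convergence yields
\[
1-\EE[\e^{-sW\ROS}] \sim \frac{\rho}{1-\rho}h(\rho,\nu)\cdot\frac{C}{\beta}s^{\nu-1}L(1/s) \sim \frac{\rho}{1-\rho}h(\rho,\nu)\bigl(1-\beta^{fw}\{s\}\bigr),
\]
the last step by~(\ref{betares}). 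Lemma~\ref{lem:tauber} then delivers $\PP(W\ROS>x)\sim\frac{\rho}{1-\rho}h(\rho,\nu)\PP(B^{fw}>x)$, and the explicit $x^{1-\nu}L(x)$ form follows from~(\ref{BRV}).

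The principal obstacle is the rigorous justification of dominated convergence on $[0,u^{\ast}(s)]$, in particular near $u=1$ where $(1-u)^{1/(1-\rho)-1}$ is only just integrable and where the pointwise expansions of $\widehat{\Phi}$ and $\Psi$ deteriorate (the $O(s^{\nu})$ corrections in the denominators become comparable to the leading $O(s)$ term, and $u^{\ast}(s)\to 1$ so the cutoff itself is moving). I would handle this by splitting the integral at $u=1-\delta$: on $[0,1-\delta]$ the expansions are uniform in $u$ and Karamata-type estimates apply directly; on $[1-\delta,u^{\ast}(s)]$ one bounds $|\widehat{\Phi}(s,z(u))\Psi(s,z(u))|$ by a constant multiple of $s^{\nu-1}L(1/s)(1-u)^{1/(1-\rho)-1}$, whose integral over that interval vanishes as $\delta\downarrow 0$. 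With this bound in place, the rest is the routine bookkeeping of slowly varying factors.
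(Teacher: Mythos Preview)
Your approach is essentially the same as the paper's: both work from the simplified LST~(\ref{eq:waitros2}), perform a linear change of variable to bring the integration to (approximately) $[0,1]$, identify the pointwise limits of $\widehat{\Phi}$ and $\Psi$, pass to the limit under the integral, and finish with Lemma~\ref{lem:tauber}. Your asymptotic for $\widehat{\Phi}$ and the identification of $f(u,\rho,\nu)$ are correct.

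The one noteworthy technical difference is the choice of scale. You use $1-z=\frac{\beta s}{1-\rho}u$, which forces a moving upper limit $u^{\ast}(s)=\frac{(1-\mu\{s\})(1-\rho)}{\beta s}\to 1$; the paper instead uses $1-z=\epsilon(s)u$ with $\epsilon(s)=1-\mu\{s\}$, so the domain is \emph{exactly} $[0,1]$ for every $s$. Since $z-\beta\{s+\lambda(1-z)\}$ vanishes at $z=\mu\{s\}$, the singular behaviour of $\widehat{\Phi}$ sits at $u=u^{\ast}(s)$, not at $u=1$, and $\Psi$ vanishes like $(u^{\ast}(s)-u)^{1/(1-\rho)}$ there. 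Your proposed bound $s^{\nu-1}L(1/s)(1-u)^{1/(1-\rho)-1}$ on $[1-\delta,u^{\ast}(s)]$ is therefore not the right object (the integrable singularity is in $u^{\ast}(s)-u$, not $1-u$); the paper sidesteps this by its choice of $\epsilon(s)$, which aligns the singularity with $u=1$ and then handles the slowly varying factors uniformly via Potter's Theorem rather than by splitting the interval. With that substitution your outline goes through without the endpoint bookkeeping.
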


\begin{remark}
The following result for the waiting time distribution in the $M/G/1$
queue operating under the FCFS discipline is well-known (cf.\
Cohen~\cite{COH73} for the regularly varying case; see Pakes
\cite{Pakes75} for an extension to the larger class of subexponential
residual service time distributions): if (\ref{BRV}) holds, then
\begin{equation}
\PP(W\FCFS>x) \sim \frac{\rho}{1-\rho} \PP(B^{fw} > x), ~~~ x\to\infty.
\label{WFCFS}
\end{equation}
We can now conclude that
\begin{equation}\label{eq:wros:wfcfs}
\PP(W\ROS>x) \sim h(\rho,\nu)\PP(W\FCFS>x), ~~~ x \rightarrow \infty .
\end{equation} 
\end{remark}

\begin{remark}
It is actually possible to prove that $h(\rho,\nu)$ is always less
than $1$: indeed, the function $f(u,\rho,\nu)$ is strictly convex in
$\nu$ (as a sum of exponentials) and therefore
$h(\rho, \nu)$ is also strictly convex in $\nu$;
moreover, simple computations using integration by parts show that
\[
  \int_0^1f(u,\rho,1)\d u = \int_0^1f(u,\rho,2)\d u = 1,
\]
and therefore
\begin{equation}
  h(\rho, \nu) < 1, \ \ \mbox{for all }1<\nu<2,\ 0\leq\rho<1.
\end{equation}

Numerical computations with MAPLE suggest that $h(\rho,\nu)$ is
decreasing in $\rho$ and thus always larger than its limit when
$\rho\to1$, which is by simple arguments equal to $\Gamma(\nu)$. 
Unfortunately, we
have not been able to find a simple proof for this fact.
In Figure 1 we have plotted $h(\rho,\nu)$ for $0 < \rho <1$ and $1 <
\nu < 2$.
\begin{figure}\label{fig:h}
\begin{center}
\ifx\pdfoutput\undefined
  \includegraphics[angle=-90]{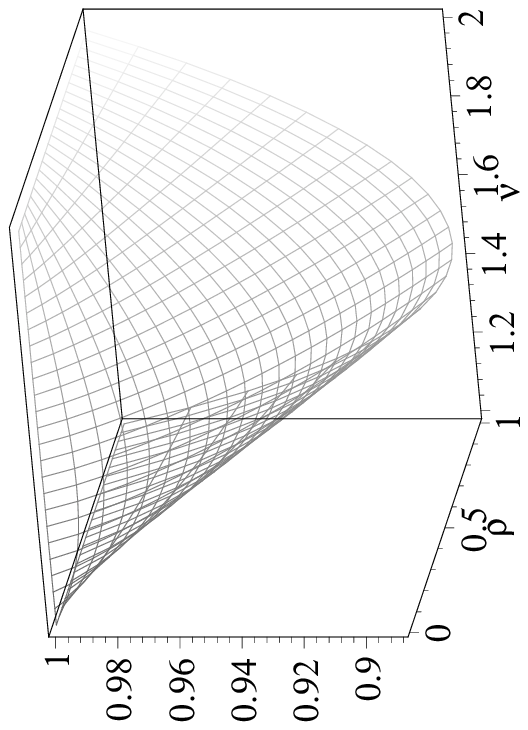}
\else
  \includegraphics{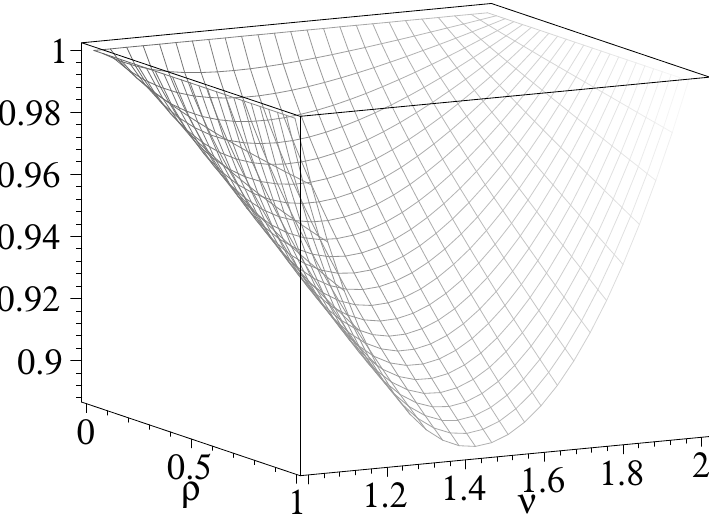}
\fi
\end{center}
\caption{A plot of the function $(\rho,\nu)\mapsto h(\nu,\rho)$ for
  $0\leq\rho\leq 1$ and $1\leq\nu\leq 2$. Note that the minimal value
  for $h$ seems to be $h(1,3/2)=\Gamma(3/2)\approx 0.88622\cdots$.}
\end{figure}

\noindent
It is interesting to observe that the tail behavior of $W_{ROS}$ and
$W_{FCFS}$ is so similar in the regularly varying case. This strongly
contrasts with the situation for the $M/M/1$ queue, where the purely
exponential waiting time tail for FCFS strongly deviates from the
$C_0x^{-5/6} {\rm e}^{-C_1 x -C_2 x^{1/3}}$ waiting time tail behavior
that was exposed by Flatto \cite{Flatto} for ROS.
\end{remark}

\begin{remark}\label{rem:fg}
The first part of $h(\rho,\nu)$ is a Beta function, and the second
part is a hypergeometric function (cf.\ Abramowitz and
Stegun~\cite{AS}). In particular,
\[
\int_0^1 u^{\nu-1} (1-u)^{\frac{1}{1-\rho}} {\rm d}u 
  = B\Bigl(\nu,\frac{1}{1-\rho}+1\Bigr)
  = \frac{\Gamma(\nu) \Gamma(\frac{1}{1-\rho}+1)}{\Gamma(\nu + \frac{1}{1-\rho}+1)} .
\]
Using partial integration and the above formula for Beta functions,
one gets the following form, which is useful for future comparisons
(see Section~\ref{rvprob}):
\begin{equation}
h(\rho,\nu) = 1-\rho + \int_0^1 g(u,\rho,\nu) {\rm d}u,
\label{eq:fgrel}
\end{equation}
with
\begin{equation}
g(u,\rho,\nu) 
   \egaldef  \left[
\frac{\rho\nu(1-u)-\rho}{u}
\left(
\frac{\rho u}{1-\rho}\right)^{\nu-1}
             +\rho \nu \left(1+\frac{\rho u}{1-\rho}\right)^{\nu-1} \right]
              (1-u)^{\frac{1}{1-\rho}}.
\nonumber
\end{equation}
\end{remark}

\begin{proof}[Proof of Theorem~\ref{thm:mg1}]
We shall prove the theorem by applying Lemma~\ref{lem:tauber} to an
expression for the LST of the waiting time distribution. So we need to
consider $\EE [\e^{-sW\ROS}]-1$ as $s\to0$. In order to do that, we use the
change of variable $1-z=\epsilon(s)u$, with
$\epsilon(s)\egaldef1-\mu\{s\}$, in (\ref{eq:waitros2}) to obtain
\[
\int_{\mu\{s\}}^1\widehat{\Phi}(s,z)\Psi(s,z)\d z
  =\epsilon(s)\int_0^1\widehat{\Phi}(s,1-\epsilon(s)u)
                             \Psi(s,1-\epsilon(s)u)\d u.
\]

Let us first evaluate the function $\Psi$. One can write
\[
\log\Psi(s,1-\epsilon(s)u)
  =-\int_0^1\frac{\epsilon(s)u\d v}
                     {1-\epsilon(s)uv-\beta\{s+\lambda\epsilon(s)uv\}},
\]
and, using~(\ref{eq:beta:tail}) and~(\ref{eq:mu:tail}),
\[
\lim_{s\to0}\frac{1}{\epsilon(s)}
            \Bigl[1-\epsilon(s)uv-\beta\{s+\lambda\epsilon(s)uv\}\Bigr]
  =(1-\rho)(1-uv),
\]
the limit being uniform in $u$ and $v$. Therefore the following
limit holds uniformly in $u$:
\begin{equation}
\lim_{s\to0}\Psi(s,1-\epsilon(s)u)
  =\exp\left[-\frac{1}{1-\rho}\int_0^1\frac{u\d v}{1-uv}\right]
  =(1-u)^{\frac{1}{1-\rho}}.\label{eq:limPsi}
\end{equation}

The evaluation of $\widehat{\Phi}$ is not difficult either. First note
that the denominators appearing in $\widehat{\Phi}(s,1-\epsilon(s)u)$
can be expressed in terms of $\beta^{fw}\{\cdot\}$ as follows:
\begin{eqnarray*}
\lefteqn{1-\epsilon(s)u-\beta\{s+\lambda\epsilon(s)u\}\, =}\quad\quad  &  & \\
 &  & -(1-\rho)\epsilon(s)u+\beta s
      -(\beta s+\rho\epsilon(s)u)(1-\beta^{fw}\{s+\lambda\epsilon(s)u\}), \\
\lefteqn{1-\epsilon(s)u-\beta\{\lambda\epsilon(s)u\}\, =}\quad\quad  &  & \\
 &  & -(1-\rho)\epsilon(s)u
      -\rho\epsilon(s)u(1-\beta^{fw}\{\lambda\epsilon(s)u\}).
\end{eqnarray*}
\noindent
Then write
\begin{eqnarray*}
\hspace{-2cm}
\lefteqn{\Bigl(\epsilon(s)u-\frac{\beta s}{1-\rho}\Bigr)
         (1-\epsilon(s)u-\beta\{\lambda\epsilon(s)u\})
         -\epsilon(s)u(1-\epsilon(s)u-\beta\{s+\lambda\epsilon(s)u\})}
\qquad\qquad  &  & \\
 & = & \epsilon(s)u
       \Bigl[(\beta s+\rho\epsilon(s)u)(1-\beta^{fw}\{s+\lambda\epsilon(s)u\})\\
 &  &  \phantom{\epsilon(s)u}
       +\Bigl(\frac{\rho\beta s}{1-\rho}-\rho\epsilon(s)u\Bigr)
        (1-\beta^{fw}\{\lambda\epsilon(s)u\})\Bigr],
\end{eqnarray*}
and finally, as $s\to0$,
\begin{eqnarray*}
\widehat{\Phi}(s,1-\epsilon(s)u)
  &\sim&  -\frac{1}{(1-\rho)(1-u)}
             \Bigl[\Bigl(1+\frac{\rho u}{1-\rho}\Bigr)
                   (1-\beta^{fw}\{s+\lambda\epsilon(s)u\})\\
 &  & \phantom{\frac{1}{(1-\rho)(1-u)}}
                    +\frac{\rho(1-u)}{1-\rho}
                     (1-\beta^{fw}\{\lambda\epsilon(s)u\})\Bigr].
\end{eqnarray*}

We take into account now the regular variation assumption~(\ref{BRV}),
which yields (\ref{betares})
so that
\begin{eqnarray*}
\widehat{\Phi}(s,1-\epsilon(s)u) 
  & \sim& -\frac{Cs^{\nu-1}}{\beta(1-\rho)(1-u)}\biggl\{
                \frac{\rho(1-u)}{1-\rho}
                \Bigl(\frac{\rho u}{1-\rho}\Bigr)^{\nu-1}
                L\Bigl(\frac{1}{\lambda\epsilon(s)u}\Bigr)\\
 &  & \phantom{-\frac{Cs^{\nu-1}}{\beta(1-\rho)(1-u)}}
                +\Bigl(1+\frac{\rho u}{1-\rho}\Bigr)^{\nu}
                 L\Bigl(\frac{1}{s+\lambda\epsilon(s)u}\Bigr)\biggr\}.
\end{eqnarray*}
Using Potter's Theorem (see~\cite{MeyTeu:1}, Theorem 1.5.6), setting
$\delta\egaldef(\nu-1)/2$, there exists $X>0$ such that, as long as
$s\leq 1/X$ and $\lambda\epsilon(s)\leq 1/X$,
\begin{eqnarray*}
L\Bigl(\frac{1}{\lambda\epsilon(s)u}\Bigr)
  &\leq & 2\max\Bigl[\Bigl(\frac{s}{\lambda\epsilon(s)u}\Bigr)^\delta,
                     \Bigl(\frac{s}{\lambda\epsilon(s)u}\Bigr)^{-\delta}\Bigr]
          L(1/s),\\
L\Bigl(\frac{1}{s+\lambda\epsilon(s)u}\Bigr)
  &\leq & 2\max\Bigl[\Bigl(\frac{1}{1+\frac{\lambda\epsilon(s)}{s}u}\Bigr)^\delta,
                     \Bigl(\frac{1}{1+\frac{\lambda\epsilon(s)}{s}u}\Bigr)^{-\delta}\Bigr]
         L(1/s).
\end{eqnarray*}

These bounds allow application of the Dominated Convergence Theorem to
\[
  \int_0^1 \frac{1}{s^{\nu-1}L(1/s)}\widehat{\Phi}(s,1-\epsilon(s)u)
                             \Psi(s,1-\epsilon(s)u)\d u,
\]
which yields
\begin{equation}
1-\EE [\e^{-sW\ROS}] 
   \sim  \frac{\lambda C}{1-\rho}
      \Bigl[\int_0^1 f(u,\rho,\nu) \d u\Bigr] s^{\nu-1}L(1/s)
   \textrm{, as }s\to0.
\label{esw}
\end{equation}
\noindent
Using Lemma~\ref{lem:tauber}, the theorem follows.
\end{proof}

\section{Agreement of results}
\label{rvprob}

While Sections~\ref{global} and~\ref{rvlst} use completely different
methods of proof, it is clear that the asymptotics for the $M/G/1$
queue with regularly varying service time distribution (as in
Theorem~\ref{thm:mg1}) has to be a mere consequence of
Theorem~\ref{thm:main}. This section shows that this is indeed true.
As a first step, we give another asymptotic expression for $\PP(W\ROS>x)$
which, while less intuitive than the rewriting proposed in
Section~\ref{global}, bears a strong similarity with
Theorem~\ref{thm:mg1}.

\begin{lemma}\label{lem:main2}
In the GI/G/1 ROS queue with $\st\in{\cal L}\bigcap{\cal D}$, we have
\begin{eqnarray*}
\PP(W\ROS>x) &\sim&
  \rho\PP(B^{fw}>x)\\
  &&\mbox{}+\int_0^1 
     \biggl\{\frac{1}{\alpha(1-\rho)}
         \EE \Bigl[B\ind{\frac{x(1-\rho)}{\rho u+1-\rho}< B 
                   \leq \frac{x(1-\rho)}{\rho u}}\Bigr]\\
  &&\phantom{+\int_0^1\biggl(}
       + \frac{x}{\alpha u^2}\PP\Bigl(B>\frac{x(1-\rho)}{\rho u}\Bigr)
         (1-u)^{\frac{1}{1-\rho}}\,\dd u \biggr\}.
\end{eqnarray*}
\end{lemma}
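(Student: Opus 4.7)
The plan is to derive Lemma~\ref{lem:main2} directly from Theorem~\ref{thm:main} by performing a change of variable in the inner $z$-integral of each of the four terms, followed by a Fubini swap so that the new variable $u$ becomes outermost. The first term $\rho\PP(B^{fw}>x)$ is common to both statements and passes through untouched.

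First I would handle the second and third integrals of Theorem~\ref{thm:main} jointly, since they share the integrand factor $(1-(x+v\alpha-z)(1-\rho)/(\rho z))^{1/(1-\rho)}$. The natural substitution is $u=(x+v\alpha-z)(1-\rho)/(\rho z)$, equivalently $z=(x+v\alpha)(1-\rho)/(u\rho+1-\rho)$, which converts the prefactor to $(1-u)^{1/(1-\rho)}$; the endpoints $z=v\alpha+x$, $z=(v\alpha+x)(1-\rho)$ and $z=v\alpha$ correspond respectively to $u=0$, $u=1$ and $u=x(1-\rho)/(\rho v\alpha)$. After a Fubini swap placing $u$ outside, I would recast the remaining $v$-integration as a $z$-integration via the same substitution, the Jacobian collapsing (after cancellation of the $\rho u+1-\rho$ denominators) to $z\rho/[\alpha(1-\rho)]$. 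The cutoff $v=cx$ separating the two integrals has been chosen precisely so that the transformed $z$-upper-limit of the second piece and the $z$-lower-limit of the third piece both equal $x(1-\rho)/[\rho(u\rho+1-\rho)]$; the two $z$-ranges therefore splice together into the single interval $(x(1-\rho)/(u\rho+1-\rho),\,x(1-\rho)/(\rho u)]$, which is exactly the indicator event appearing in the lemma. For the fourth integral, whose integrand carries the different factor $(1-x(1-\rho)/(z-v\alpha(1-\rho)))^{1/(1-\rho)}$, I would use $u=x(1-\rho)/(z-v\alpha(1-\rho))$, i.e.\ $z=x(1-\rho)/u+v\alpha(1-\rho)$; this again normalises the prefactor to $(1-u)^{1/(1-\rho)}$ with $u\in[x(1-\rho)/(v\alpha\rho),\,1]$, and after the analogous swap and $v\to z$ change the inner integral evaluates to $\PP(B>x(1-\rho)/(\rho u))$, yielding the second piece of the lemma with prefactor $x(1-u)^{1/(1-\rho)}/(\alpha u^2)$.

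The main obstacle is the bookkeeping of endpoints when swapping the order of integration in the treatment of the second and third terms: the matching of the two $z$-intervals hinges on the specific value $c=(1-\rho)/(\alpha\rho)$, and one must check carefully that the Jacobian factors collapse to the clean weight $\rho z/[\alpha(1-\rho)]$ inside the indicator integral. A subsidiary difficulty is that the direct substitution naturally produces the pair of integrands weighted by $(1-u)^{1/(1-\rho)}$; matching the compact form written in the statement then requires verifying (using $B\in\mathcal{L}\cap\mathcal{D}$ and the asymptotic equivalences already exploited in Theorem~\ref{thm:srp} and Corollary~\ref{cor:bpres}) that the residual contributions are of lower order, so that only the terms in the lemma survive. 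Once these ingredients are assembled the stated form follows by direct algebra.
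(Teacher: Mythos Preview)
Your approach is essentially the paper's own proof: the same substitution $u=(x+v\alpha-z)(1-\rho)/(\rho z)$ for terms two and three followed by Fubini and the $v\to t$ change (the paper first rewrites $I_2+I_3$ as an integral over $v\in[0,\infty)$ minus a correction, but after swapping this yields exactly your range $v\in[0,cx/u]$), and your single substitution $u=x(1-\rho)/(z-v\alpha(1-\rho))$ for term four is just the composition of the paper's two steps $z\mapsto w=z-v\alpha(1-\rho)$ and $w\mapsto u=x(1-\rho)/w$. Your splicing point $x(1-\rho)/[\rho(\rho u+1-\rho)]$ and the Jacobian bookkeeping are correct.

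One clarification: your final paragraph is unnecessary. Once Theorem~\ref{thm:main} is in hand, the lemma is a \emph{purely algebraic} rewriting of its right-hand side; there are no residual contributions to estimate and the $\mathcal{L}\cap\mathcal{D}$ hypothesis plays no further role beyond what was already used to obtain Theorem~\ref{thm:main}. The factor $(1-u)^{1/(1-\rho)}$ in the lemma statement multiplies \emph{both} terms inside the brace (the typesetting is slightly ambiguous), which is exactly what your substitutions deliver, so nothing needs to be discarded.
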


\begin{proof}
To simplify the computations, assume that $B$ has a density function $b$.
This is, however, not really needed for the result.

The first term in Theorem~\ref{thm:main} coincides with that in the expression above.
Next, consider the second and third terms in Theorem~\ref{thm:main}
together, using the changes of variable $z\mapsto u=(x+v\alpha-z)(1-\rho
)/(\rho z)$ and $v\mapsto t=(x+v\alpha)(1-\rho)/(\rho u+1-\rho)$.
\begin{eqnarray*}
\lefteqn{\int_{0}^{cx}\dd v
\int_{(v\alpha+x)(1-\rho)}^{v\alpha+x} \dd z\,b(z)
\left(
1-\frac{(x+v\alpha-z)(1-\rho)}{\rho z} \right)^{\frac{1}{1-\rho}}}\qquad\\
\lefteqn{\quad+\int_{cx}^{\infty}\dd v
\int_{v\alpha}^{v\alpha+x} \dd z\,b(z)
\left(
1-\frac{(x+v\alpha-z)(1-\rho)}{\rho z} \right)^{\frac{1}{1-\rho}}}\qquad
\\
&=& \int_{0}^{\infty}\dd v
\int_{(v\alpha+x)(1-\rho)}^{v\alpha+x} \dd z\,b(z)
\left(
1-\frac{(x+v\alpha-z)(1-\rho)}{\rho z} \right)^{\frac{1}{1-\rho}}\\
&& \mbox{}-\int_{cx}^{\infty}\dd v
\int_{(v\alpha+x)(1-\rho)}^{v\alpha} \dd z\,b(z)
\left(
1-\frac{(x+v\alpha-z)(1-\rho)}{\rho z} \right)^{\frac{1}{1-\rho}}\\
&=& \int_{0}^{\infty}\dd v
\int_0^1 \dd u\, b\left(\frac{(x+v\alpha)(1-\rho)}{\rho u+1-\rho}\right)
 \frac{\rho(x+v\alpha)(1-\rho)}{(\rho u+1-\rho)^2}(1-u)^{\frac{1}{1-\rho}}\\
&&\mbox{}-\int_{cx}^{\infty}\dd v
\int_0^{\frac{cx}{v}}\dd u\, b\left(\frac{(x+v\alpha)(1-\rho)}{\rho u+1-\rho}\right)
 \frac{\rho(x+v\alpha)(1-\rho)}{(\rho u+1-\rho)^2}(1-u)^{\frac{1}{1-\rho}}\\
&=& \int_0^1 \dd u\, (1-u)^{\frac{1}{1-\rho}}
        \int_{0}^{\frac{cx}{u}}\dd v\,
           b\left(\frac{(x+v\alpha)(1-\rho)}{\rho u+1-\rho}\right)
           \frac{\rho(x+v\alpha)(1-\rho)}{(\rho u+1-\rho)^2}\\
&=& \frac{1}{\alpha(1-\rho)}\int_0^1 \dd u\, (1-u)^{\frac{1}{1-\rho}}
       \int_{\frac{x(1-\rho)}{\rho u+1-\rho}}
           ^{\frac{x(1-\rho)}{\rho u}}
                    \dd t\,tb(t).
\end{eqnarray*}
Finally, focus on the last term of Theorem~\ref{thm:main}. We use the
changes of variables $z\mapsto w=z- v\alpha(1-\rho)$ and then $w\mapsto
u=x(1-\rho)/w$.
\begin{eqnarray}
\lefteqn{\int_{cx}^{\infty}\dd v
  \int_{(v\alpha+x)(1-\rho)}^{v\alpha} \dd z\, b(z)
  \left(1-\frac{x(1-\rho)}{z-v\alpha(1-\rho)}\right)^{\frac{1}{1-\rho}}}
       \nonumber\qquad\\
&=& 
  \int_{cx}^{\infty}\dd v
    \int_{x(1-\rho)}^{\rho v\alpha} \dd w\, b\bigl(w+v\alpha(1-\rho)\bigr)
    \left(1-\frac{x(1-\rho)}{w}\right)^{\frac{1}{1-\rho}}\nonumber\\
&=&
  \int_{x(1-\rho)}^{\infty} 
      \dd w\left(1-\frac{x(1-\rho)}{w}\right)^{\frac{1}{1-\rho}}
      \int_{\frac{w}{\rho\alpha}}^\infty \dd v\, b\bigl(w+va(1-\rho)\bigr)\nonumber\\
&=& 
  \frac{1}{\alpha(1-\rho)}\int_{x(1-\rho)}^{\infty} 
      \dd w\left(1-\frac{x(1-\rho)}{w}\right)^{\frac{1}{1-\rho}}
      \PP\left(B>\frac{w}{\rho}\right)\nonumber\\
&=& 
  \frac{x}{\alpha}\int_0^1\frac{1}{u^2}
        \PP\left(B>\frac{x(1-\rho)}{\rho u}\right)(1-u)^{\frac{1}{1-\rho}}du.
    \label{eq:lasterm}
\end{eqnarray}
The proof of the lemma is completed by collecting the terms.
\end{proof}

\begin{remark}
It is interesting to note that~(\ref{eq:lasterm}) can be rewritten as
follows:
\begin{eqnarray*}
\lefteqn{\frac{x}{\alpha}\int_0^1\frac{1}{u^2}
  \PP\left(B>\frac{x(1-\rho)}{\rho u}\right)(1-u)^{\frac{1}{1-\rho}}du}\qquad\\
 &=& \frac{x}{\alpha}\int_0^1\frac{1}{u^2}
  \PP\left(B>\frac{x(1-\rho)}{\rho u}\right)
  \PP\Bigl(W\ROS^*>\frac{\beta u}{1-\rho}\Bigr)du\\
 &=& \frac{x}{\alpha}\frac{\rho}{x(1-\rho)} 
      \int_{\frac{x(1-\rho)}{\rho}}^\infty
          \PP(B>z)\PP\Bigl(W\ROS^*>\frac{\alpha x}{z}\Bigr)dz\\
 &=& \frac{\rho^2}{1-\rho}
       \PP\Bigl(B^{fw}>\frac{x(1-\rho)}{\rho}, B^{fw}W\ROS^*>\alpha x\Bigr).
\end{eqnarray*}
%
% While this formula's compactness is appealing, it does not seem to lend
% itself to a nice interpretation. 
\end{remark}
In the case of Poisson arrivals and regularly varying service times,
assuming again that
\[
    \PP\left(B>x\right)\sim\frac{C}{\Gamma(1-\nu)}x^{-\nu}L(x),
\]
and recalling that $\lambda=1/\alpha$, it is easy to go from the
expression in Lemma~\ref{lem:main2} to the expression
\begin{eqnarray*}
  \PP(W\ROS>x) &\sim &
 \frac{C L(x) x^{1-\nu}}{\alpha(1-\rho)\Gamma(2-\nu)}
       \left[1-\rho+\int_{0}^1 g(u,\rho,\nu)\dd u\right].
\end{eqnarray*}

Using~(\ref{eq:fgrel}) it is seen that this corresponds to
Theorem~\ref{thm:mg1}.

\section{Heavy-traffic limit for the waiting time distribution}
\label{heavytraffic}

In this section, we consider the $M/G/1$ queue with general service
time distribution. We are interested in the heavy-traffic case:
$\rho\to1$. The main result is to find a sequence $\Delta(\rho)$
which tends to $0$ as $\rho\to1$ such that $\EE  [\e^{-\omega
\Delta(\rho)W\ROS}]$ tends to a proper limit as $\rho\to1$. 
This way we
shall be able to retrieve a result of Kingman~\cite{Kin:2} for the
case of finite service time variance, as well as derive a new result
for the case of regularly varying service time distribution with
infinite variance.

The following lemma will be useful in the sequel.

\begin{lemma}\label{lem:Psi}
The following bound holds for all $s>0$ and $0\leq z\leq 1$:
\begin{equation}\label{eq:boundPsi}
  \Psi(s,z)\leq\exp\Bigl[-\frac{1-z}{\beta s}\Bigr].
\end{equation}
Moreover, for any $t>0$, 
\begin{equation}\label{eq:limPsi2}
  \lim_{(\rho,s)\to(1,0)}\Psi(s,1-\beta st)=\e^{-t}.
\end{equation}
\end{lemma}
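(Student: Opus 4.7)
The plan for \eqref{eq:boundPsi} is to exploit the elementary inequality $1-\e^{-ub}\leq ub$ valid for $u,b\geq 0$. Integrating this against the distribution of $B$ yields $\beta\{u\}\geq 1-\beta u$ for all $u\geq 0$. Applying this with $u = s+\lambda(1-y)$ gives, for any $y\in[z,1]$ and $\rho\leq 1$,
\[
    y - \beta\{s+\lambda(1-y)\} \leq y - 1 + \beta s + \rho(1-y) = \beta s - (1-\rho)(1-y) \leq \beta s.
\]
Substituting this upper bound into the denominator of the integrand in \eqref{psisz} then gives directly
\[
    -\log\Psi(s,z) = \int_z^1 \frac{\dd y}{y-\beta\{s+\lambda(1-y)\}} \geq \int_z^1 \frac{\dd y}{\beta s} = \frac{1-z}{\beta s},
\]
which is \eqref{eq:boundPsi}. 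For $z<\mu\{s\}$, where the integrand changes sign, the bound is consistent with the convention $\Psi(s,z)\egaldef 0$, extending the identity $\Psi(s,\mu\{s\})=0$ already noted in the text.

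For the limit \eqref{eq:limPsi2}, my plan is to make the substitution $y = 1-\beta s u$ in \eqref{psisz}, which, using $\lambda\beta=\rho$, transforms the integral into
\[
    -\log\Psi(s,1-\beta s t) = \int_0^t \frac{\beta s\,\dd u}{1-\beta s u - \beta\{s(1+\rho u)\}}.
\]
Then, invoking the first-order expansion $(1-\beta\{v\})/v\to\beta$ as $v\downarrow 0$ (which is nothing more than $\EE B=\beta<\infty$), the denominator can be rewritten as $\beta s[1+(\rho-1)u] + o(s)$, so the integrand equals $1/[1+(\rho-1)u] + o(1)$. As $(\rho,s)\to(1,0)$ this quantity tends to $1$ pointwise on $[0,t]$; once uniformity is established, the integral tends to $t$ and \eqref{eq:limPsi2} follows.

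The main technical point is uniformity of the Taylor expansion of $\beta\{\cdot\}$ under the joint limit, in order to exchange limit and integration. Since $s(1+\rho u)\in[s,\,s(1+t)]$ for $u\in[0,t]$ and $\rho\leq 1$, the remainder in $1-\beta\{s(1+\rho u)\}-\beta s(1+\rho u)$ is indeed uniformly $o(s)$ in $u$. For $\rho$ close enough to $1$, the quantity $1+(\rho-1)u$ stays close to $1$ on $[0,t]$, so the denominator is bounded below by a constant multiple of $\beta s>0$ and the integrand admits a uniform bound (say $2$); dominated convergence then delivers the limit.
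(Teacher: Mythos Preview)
Your argument is correct and follows essentially the same route as the paper. The bound \eqref{eq:boundPsi} is obtained identically, via $\beta\{u\}\geq 1-\beta u$; for the limit \eqref{eq:limPsi2} the paper uses the substitution $y=1-\beta s t v$ with $v\in[0,1]$, which differs from your $y=1-\beta s u$ with $u\in[0,t]$ only by the trivial rescaling $u=tv$, and the ensuing Taylor expansion of the denominator is the same. Your explicit justification of the exchange of limit and integration (uniform $o(s)$ and a dominated-convergence bound) is a welcome addition that the paper leaves implicit.
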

\begin{proof}
Using the inequality $\beta\{s\}\geq1-\beta s$, one finds
\[
  y-\beta\{s+\lambda(1-y)\}\leq y-1+\beta s+\rho(1-y)\leq\beta s,
\]
and~(\ref{eq:boundPsi}) follows from the definition~(\ref{psisz}),
since
\[
  -\log\Psi(s,z)\geq \int_z^1 \frac{dy}{\beta s}=\frac{1-z}{\beta s}.
\]

The proof of~(\ref{eq:limPsi2}) follows a similar argument: indeed, 
\[
-\log\Psi(s,1-\beta st) 
  = \int_{1-\beta st}^1\frac{\d y}{y-\beta\{s+\lambda(1-y)\}}
  =  \int_{0}^1\frac{\beta st\d v}{1-\beta stv -\beta\{s+\rho stv\}} ,
\]
and, as $s\to0$,
\[
  1-\beta stv -\beta\{s+\rho stv\} \sim \beta s(1-(1-\rho)tv).
\]
This yields~(\ref{eq:limPsi2}) and concludes the proof of the lemma.
\end{proof}

\noindent
The LST of the steady-state waiting time distribution
under FCFS is given by (cf.~\cite{Coh:2}, p.\ 255): for any $s \geq 0$, 
\[
 \EE [\e^{-sW\FCFS}] = \frac{1-\rho}{1-\rho\beta^{fw}\{s\}}.
\]

The following lemma illustrates the relation between $W\ROS$ and $W\FCFS$
in heavy traffic.

\begin{lemma}\label{lem:heavytraffic}
Assume that $\beta<\infty$ and that there exists $\Delta(\rho)>0$ which can serve as a proper
scaling for $W\FCFS$: for any $\omega >0$,
\begin{equation}\label{eq:scalefcfs}
  \lim_{\rho\to1} \EE [\e^{-\omega \Delta(\rho)W\FCFS}] 
          = \EE [\e^{-\omega \widehat{W}\FCFS}],
\end{equation}
where $\widehat{W}\FCFS$ is a non-negative random variable. 
Then $\Delta(\rho)$ is a proper scaling for $W\ROS$: for any $\omega >0$,
\[
 \lim_{\rho\to1} \EE [\e^{-\omega \Delta(\rho)W\ROS}] 
   =\int_{0}^{\infty} \EE [\e^{-\omega t\widehat{W}\FCFS}]\e^{-t}\d t
   \egaldef\EE [\e^{-\omega \widehat W\ROS}].
\]
\end{lemma}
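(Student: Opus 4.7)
The plan is to start from the alternative Laplace transform formula~(\ref{eq:waitros2}) evaluated at $s=\omega\Delta(\rho)$, rescale the integration range by the substitution $z=1-\beta st$ (so $\d z=-\beta s\,\d t$), and pass to the limit $\rho\to 1$ under the integral sign. The substitution turns (\ref{eq:waitros2}) into
\[
  \EE[\e^{-\omega\Delta(\rho)W\ROS}]-1
  = \rho(1-\rho)\int_0^{(1-\mu\{s\})/(\beta s)}\widehat{\Phi}(s,1-\beta st)\Psi(s,1-\beta st)\,\d t,
\]
which is exactly the form suited to exploit Lemma~\ref{lem:Psi}.

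First I would translate the scaling hypothesis~(\ref{eq:scalefcfs}). Writing $\psi(\omega):=\EE[\e^{-\omega\widehat{W}\FCFS}]$ and using the formula for $\EE[\e^{-sW\FCFS}]$ stated just above the lemma, (\ref{eq:scalefcfs}) is equivalent to
\[
  \frac{1-\beta^{fw}\{\omega\Delta(\rho)\}}{1-\rho}\,\longrightarrow\,\frac{1}{\psi(\omega)}-1\,=:\,\eta(\omega),\qquad\rho\to 1,
\]
for every $\omega>0$, with $\eta$ continuous (since $\psi$ is a continuous, strictly positive LST). Using the identity $1-\beta\{x\}=\beta x\beta^{fw}\{x\}$ from~(\ref{eq:betar}) to simplify the denominators in $\Phi$ and $\widehat\Phi$, a short calculation gives
\[
  (1-\rho)\widehat{\Phi}(s,1-\beta st)
  =\frac{(1-\rho)t-1}{(1+\rho t)\beta^{fw}\{s(1+\rho t)\}-t}
   -\frac{1-\rho}{\rho\beta^{fw}\{\rho st\}-1}.
\]

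Second, I would compute the pointwise limit. The first quotient converges to $-1/1=-1$ because its denominator tends to $(1+t)\cdot 1-t=1$. For the second, writing its denominator as $-(1-\rho)-\rho(1-\beta^{fw}\{\rho st\})$ and dividing numerator and denominator by $1-\rho$ yields $-1/\bigl(1+\rho(1-\beta^{fw}\{\rho st\})/(1-\rho)\bigr)$, which converges to $-1/(1+\eta(\omega t))=-\psi(\omega t)$. Here one uses the reformulated hypothesis applied at the argument $\omega\rho t\to\omega t$; continuity of $\eta$ together with the monotonicity of $\beta^{fw}$ (which permits sandwiching $\omega\rho t$ between $\omega(1\pm\varepsilon)t$) handles the mild $\rho$-dependence of the argument. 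Combined with $\Psi(s,1-\beta st)\to\e^{-t}$ from~(\ref{eq:limPsi2}), the pointwise limit of the full integrand is $(\psi(\omega t)-1)\e^{-t}$.

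Third, I would justify the interchange of limit and integral. The upper limit $(1-\mu\{s\})/(\beta s)$ tends to $+\infty$: the functional equation~(\ref{eq:mu}) combined with~(\ref{eq:betar}) yields $(1-\mu\{s\})/(\beta s)=\beta^{fw}\{\xi\}/(1-\rho\beta^{fw}\{\xi\})$ with $\xi=s+\lambda(1-\mu\{s\})\to 0$, and a self-consistency argument shows that the denominator is of order $1-\rho$, so the quotient diverges like $1/(1-\rho)$. Extending the integrand by zero past the upper limit, Lebesgue's dominated convergence theorem applies once a uniform integrable bound on $|(1-\rho)\widehat{\Phi}(s,1-\beta st)\Psi(s,1-\beta st)|$ is in place. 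The factor $\Psi(s,1-\beta st)\leq\e^{-t}$ from~(\ref{eq:boundPsi}) supplies the exponential tail, so it suffices to show that $(1-\rho)\widehat{\Phi}(s,1-\beta st)$ stays bounded on the allowed range of $t$. The main technical obstacle is precisely this uniform bound: one must verify that the denominator $(1+\rho t)\beta^{fw}\{s(1+\rho t)\}-t$ remains bounded away from $0$ up to the integration endpoint, which is exactly where the denominator of the second quotient $\rho\beta^{fw}\{\rho st\}-1$ also approaches zero. Using the crude inequalities $0\leq\beta^{fw}\{x\}\leq 1$ together with the defining equation of $\mu\{s\}$ to control these denominators, the bound can be extracted. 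Once obtained, dominated convergence gives
\[
  \EE[\e^{-\omega\Delta(\rho)W\ROS}]-1 \;\longrightarrow\; \int_0^\infty(\psi(\omega t)-1)\e^{-t}\,\d t
   \;=\; \int_0^\infty\EE[\e^{-\omega t\widehat W\FCFS}]\e^{-t}\,\d t-1,
\]
which is the announced identity.
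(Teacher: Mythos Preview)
Your overall strategy---start from (\ref{eq:waitros2}), substitute $z=1-\beta st$, and pass to the limit under the integral---is the same as the paper's, and your pointwise limits are correct. The gap is exactly where you locate it: the DCT majorant. Your claim that ``$(1-\rho)\widehat{\Phi}(s,1-\beta st)$ stays bounded on the allowed range'' is false. The first denominator $D_1(t)=(1+\rho t)\beta^{fw}\{s(1+\rho t)\}-t$ satisfies $\beta s\,D_1(t)=z-\beta\{s+\lambda(1-z)\}$, which vanishes at $z=\mu\{s\}$, i.e.\ precisely at the upper integration limit $t=\epsilon(s)/(\beta s)$; so the first quotient blows up there. (Your diagnosis of the \emph{second} quotient is off: $\rho\beta^{fw}\{\rho st\}-1\le \rho-1<0$ uniformly in $t$, so that term is bounded by $1$ in absolute value for every $\rho<1$.) The crude bounds $0\le\beta^{fw}\le1$ only give $D_1(t)\le 1-(1-\rho)t$, no useful lower bound, and the defining equation of $\mu\{s\}$ is exactly what forces $D_1(t^*)=0$. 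One can argue integrability of the product because $\Psi$ vanishes at the same endpoint, but producing a \emph{uniform} integrable majorant in $\rho$ this way is not at all obvious.

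The paper sidesteps the whole issue by a one-line observation you are missing: from (\ref{psisz}) one has $\partial_z\Psi(s,z)=\Psi(s,z)/(z-\beta\{s+\lambda(1-z)\})$. Hence the integral of the first piece of $\widehat\Phi$ against $\Psi$ is an exact integration by parts,
\[
  \int_{\mu\{s\}}^{1}\frac{1-z-\frac{\beta s}{1-\rho}}{z-\beta\{s+\lambda(1-z)\}}\,\Psi(s,z)\,\d z
   \;=\; -\frac{\beta s}{1-\rho}+\int_{\mu\{s\}}^{1}\Psi(s,z)\,\d z,
\]
and the remaining integral is at most $\beta s$ by (\ref{eq:boundPsi}). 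The second piece of $\widehat\Phi$ is recognized directly as $-\frac{1}{1-\rho}\EE[\e^{-\lambda(1-z)W\FCFS}]$, giving the clean representation
\[
  \EE[\e^{-sW\ROS}]
   \;=\; \rho\int_0^{\epsilon(s)/(\beta s)}
         \EE[\e^{-\rho stW\FCFS}]\,\Psi(s,1-\beta st)\,\d t \;+\; O(1-\rho),
\]
whose integrand is trivially dominated by $\e^{-t}$ (a Laplace transform times $\Psi\le\e^{-t}$). DCT is then immediate, with no need to control $\widehat\Phi$ near the singular endpoint. If you want to rescue your version, the cleanest fix is precisely this integration by parts on the first quotient; the direct uniform bound you propose does not follow from the tools you cite.
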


\begin{proof}
The starting point of the proof is Equation~(\ref{eq:waitros2}).
First, using integration by parts,
\[
\int_{\mu\{s\}}^1\frac{1-z-\frac{\beta s}{1-\rho}}{z-\beta\{s+\lambda(1-z)\}}
                             \Psi(s,z)\d z 
  = -\frac{\beta s}{1-\rho}+\int_{\mu\{s\}}^1\Psi(s,z)\d z ,
\]
and the integral above can be bounded using~(\ref{eq:boundPsi}) as
\[
 \int_{\mu\{s\}}^1\Psi(s,z)\d z 
   \leq \int_{\mu\{s\}}^1\exp\Bigl[-\frac{1-z}{\beta s}\Bigr]\d z
   \leq \beta s.
\]
The second part of~(\ref{eq:waitros2}) can be expressed in terms of
$W\FCFS$: 
\begin{eqnarray*}
 \lefteqn{\int_{\mu\{s\}}^1 \frac{1-z}{z-\beta\{\lambda(1-z)\}}
                             \Psi(s,z)\d z}\qquad\qquad\\
  &=& -\int_{\mu\{s\}}^1 \frac{1}{1-\rho}\EE [\e^{-\lambda(1-z)W\FCFS}]
                             \Psi(s,z)\d z\\
  &=& -\frac{\beta s}{1-\rho}\int_0^{\frac{\epsilon(s)}{\beta s}} 
           \EE [\e^{-\rho stW\FCFS}]\Psi(s,1-\beta st)\d t.      
\end{eqnarray*}
Plugging these two relations into~(\ref{eq:waitros2}) yields 
\begin{equation}\label{eq:waitrosfcfs}
\EE  [\e^{-sW\ROS}] = \rho \int_0^{\frac{\epsilon(s)}{\beta s}} 
           \EE [\e^{-\rho stW\FCFS}]\Psi(s,1-\beta st)\d t
    +O(1-\rho),
\end{equation}
uniformly in $s>0$.
We now show that
\begin{equation}\label{eq:limeps}
  \lim_{\rho\to1} \frac{\epsilon(\omega \Delta(\rho))}{\Delta(\rho)}
  = +\infty,
\end{equation}
with, as before, $\epsilon(s)\egaldef1-\mu\{s\}$ and $\mu\{s\}$ is the LST of the busy period distribution.
Let $\mu_{\bar{\rho}}(s)$ be the LST of the busy period of the $M/G/1$ with service time distributions $B(\cdot)$ and load ${\bar{\rho}}$.
Obviously, for $\rho\geq{\bar{\rho}}$ we have $\mu(s)\leq\mu_{\bar{\rho}}(s)$, for all $s>0$.
Hence,
\[
  \liminf_{\rho\to1} \frac{\epsilon(\omega \Delta(\rho))}{\Delta(\rho)}
  \geq  \lim_{\rho\to1} \frac{1-\mu_{\bar{\rho}}(\omega \Delta(\rho))}{\Delta(\rho)}
  = \frac {\beta\omega}{1-{\bar{\rho}}}.
\]
This is true for any fixed ${\bar{\rho}}\in(0,1)$.
Letting ${\bar{\rho}}$ pass to~1 we obtain~(\ref{eq:limeps}).

Finally, under Assumption~(\ref{eq:scalefcfs}), Lemma~\ref{lem:Psi}
allows to apply the Dominated Convergence Theorem
to~(\ref{eq:waitrosfcfs}), and the lemma is proved.
\end{proof}

Using Feller's continuity theorem, Lemma~\ref{lem:heavytraffic} can be
rewritten in a more compelling way.

\begin{corollary}\label{cor:heavytraffic}
Assume that there exists $\Delta(\rho)>0$, and a random variable
$\widehat W\FCFS$, such that the following limit holds in distribution:
\[
  \lim_{\rho\to1}\Delta(\rho)W\FCFS=\widehat W\FCFS.
\]
Then, in distribution,
\begin{equation}
  \lim_{\rho\to1} \Delta(\rho)W\ROS = Y\widehat W\FCFS \egaldef \widehat W\ROS ,
\label{expscale}
\end{equation}
where $Y$ is an exponential random variable with mean $1$, independent
of $\widehat W\FCFS$.
\end{corollary}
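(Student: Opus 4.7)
The plan is to apply Feller's continuity theorem for Laplace--Stieltjes transforms in both directions, using Lemma~\ref{lem:heavytraffic} as the transform-level bridge between FCFS and ROS.

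First, since $\Delta(\rho)W\FCFS$ is non-negative and converges in distribution to $\widehat W\FCFS$ as $\rho\to 1$, Feller's continuity theorem gives that, for every $\omega>0$,
\[
 \lim_{\rho\to 1}\EE[\e^{-\omega\Delta(\rho)W\FCFS}] = \EE[\e^{-\omega\widehat W\FCFS}].
\]
Thus hypothesis~(\ref{eq:scalefcfs}) of Lemma~\ref{lem:heavytraffic} is satisfied, and the lemma yields, for every $\omega>0$,
\[
 \lim_{\rho\to 1}\EE[\e^{-\omega\Delta(\rho)W\ROS}]
   = \int_0^\infty \EE[\e^{-\omega t\widehat W\FCFS}]\,\e^{-t}\,\d t.
\]

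Next I would recognise the right-hand side as the LST of a product. Let $Y$ be exponentially distributed with mean $1$ and independent of $\widehat W\FCFS$; then, conditioning on $Y$,
\[
 \EE[\e^{-\omega Y\widehat W\FCFS}]
   = \int_0^\infty \EE[\e^{-\omega t\widehat W\FCFS}]\,\e^{-t}\,\d t,
\]
so the limiting transform coincides with that of $Y\widehat W\FCFS$. Since $Y\widehat W\FCFS$ is non-negative, another application of Feller's continuity theorem (this time in the converse direction) turns pointwise LST convergence into convergence in distribution, giving exactly~(\ref{expscale}).

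There is no real obstacle: the heavy lifting has already been done inside Lemma~\ref{lem:heavytraffic}, where the dominated convergence argument based on the bound~(\ref{eq:boundPsi}) and the pointwise limit~(\ref{eq:limPsi2}) was carried out. The only things that need to be checked here are (i) that Feller's continuity theorem applies on both ends, which it does because all random variables involved are non-negative and the candidate limit $Y\widehat W\FCFS$ is a.s.\ finite, and (ii) the elementary identification of the mixture integral as $\EE[\e^{-\omega Y\widehat W\FCFS}]$ by Fubini/conditioning on $Y$.
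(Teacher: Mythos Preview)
Your proposal is correct and matches the paper's own argument: the paper simply states that the corollary is a rewriting of Lemma~\ref{lem:heavytraffic} via Feller's continuity theorem, and you have spelled out precisely those two invocations of the continuity theorem together with the identification of the mixture integral as $\EE[\e^{-\omega Y\widehat W\FCFS}]$.
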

\begin{remark}
In view of the PASTA property and the fact that the workload is the
same under FCFS and ROS, (\ref{expscale}) states that the scaled
waiting time $\widehat W\ROS$ equals in distribution the product of
the unit exponential $Y$ and the scaled workload $\widehat V\ROS\eqdist\widehat W\FCFS$.
Put differently,
$\PP(\hat{W}\ROS >x| \hat{V}\ROS =y)$ $= \PP(Y>x/y) = {\rm e}^{-x/y}$.
The latter result
might be intuitively understood by referring to a \emph{snapshot
principle}. Consider a tagged customer. If it is not being elected for
service at the end of some ROS services, then in the heavy-traffic
limit the remaining workload that it sees has not changed. The
randomness (`memoryless') property of ROS then implies that the
remaining waiting time of the tagged customer has the same
distribution as before.
\end{remark}

As a first application of Lemma~\ref{lem:heavytraffic}, consider the
case where the service time distribution has a finite second moment.
The following theorem extends a result of Kingman~\cite{Kin:2}, where
it was additionally assumed that $\beta\{s\}$ exists for some $s<0$.

\begin{theorem}\label{thm:heavytraffic:light}
Assume that the variance $\sigma^2$ of the service time is finite and
let
\begin{equation}\label{eq:deltalight}
 \Delta(\rho)=\frac{\lambda(1-\rho)}{1+\frac{1}{2}\lambda^2\sigma^2}.
\end{equation}

Then, for any $\omega >0$,
\[
 \lim_{\rho\to1} \EE [\e^{-\omega \Delta(\rho)W\ROS}] 
   =\int_{0}^{\infty} \frac{\e^{-t}}{1+\omega t}\d t.
\]
\end{theorem}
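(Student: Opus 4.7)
The plan is to invoke Lemma~\ref{lem:heavytraffic}: once it is verified that $\Delta(\rho)W\FCFS$ converges in distribution to some $\widehat W\FCFS$ as $\rho\uparrow 1$, the lemma gives
\[
\lim_{\rho\to 1}\EE[\e^{-\omega\Delta(\rho)W\ROS}]
=\int_0^\infty \EE[\e^{-\omega t\widehat W\FCFS}]\e^{-t}\,\d t.
\]
For this to coincide with the stated $\int_0^\infty \e^{-t}/(1+\omega t)\,\d t$, the limit $\widehat W\FCFS$ must be exponentially distributed with unit mean, since in that case $\EE[\e^{-\omega t\widehat W\FCFS}]=1/(1+\omega t)$ and the integral reproduces the asserted form. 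So the entire proof reduces to a Kingman-style heavy-traffic statement for the FCFS waiting time under the specified scaling.

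To establish that scaling limit, I would start from the Pollaczek--Khinchine transform
\[
\EE[\e^{-sW\FCFS}]=\frac{1-\rho}{1-\rho\beta^{fw}\{s\}}
\]
recorded just before Lemma~\ref{lem:heavytraffic}. Under the finite-variance hypothesis, $\EE B^{fw}=(\sigma^2+\beta^2)/(2\beta)<\infty$, so $\beta^{fw}\{s\}=1-\EE[B^{fw}]\,s+o(s)$ as $s\downarrow 0$. Setting $s=\omega\Delta(\rho)$ (which tends to $0$ since $\Delta(\rho)=O(1-\rho)$) and dividing numerator and denominator by $1-\rho$ yields
\[
\EE[\e^{-\omega\Delta(\rho)W\FCFS}]
=\frac{1}{1+\omega\,\rho\,\EE[B^{fw}]\,\dfrac{\Delta(\rho)}{1-\rho}+o(1)}.
\]
The whole point of the calibration $\Delta(\rho)=\lambda(1-\rho)/(1+\tfrac12\lambda^2\sigma^2)$ is to force the coefficient $\rho\EE[B^{fw}]\Delta(\rho)/(1-\rho)$ to converge to $1$ as $\rho\uparrow 1$. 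Consequently the LST of $\Delta(\rho)W\FCFS$ converges pointwise to $1/(1+\omega)$, so $\Delta(\rho)W\FCFS$ converges in distribution to a unit-mean exponential, and Lemma~\ref{lem:heavytraffic} closes the argument.

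There is no serious obstacle beyond this elementary expansion: everything else is an invocation of Lemma~\ref{lem:heavytraffic} together with the explicit LST of the unit exponential. The role of the hypothesis $\sigma^2<\infty$ is precisely to ensure that $\beta^{fw}\{\cdot\}$ admits the first-order Taylor expansion used above, so the $o(\Delta(\rho)/(1-\rho))$ remainder is genuinely negligible. When the variance is infinite this argument breaks down and a different $\Delta(\rho)$ is required, as treated in the next theorem under a regularly varying tail assumption.
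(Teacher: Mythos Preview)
Your proposal is correct and follows essentially the same route as the paper: the paper's proof simply cites Kingman's classical heavy-traffic result~\cite{Kin:1} for $W\FCFS$ to obtain $\lim_{\rho\to1}\EE[\e^{-\omega\Delta(\rho)W\FCFS}]=1/(1+\omega)$ and then applies Lemma~\ref{lem:heavytraffic}. Your only addition is the explicit Pollaczek--Khinchine expansion in place of the citation, which is a welcome elaboration rather than a different argument.
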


\begin{proof}
It is known from a classical result of Kingman~\cite{Kin:1} that,
with $\Delta(\rho)$ defined by~(\ref{eq:deltalight}),
\[
  \lim_{\rho\to1} \EE [\e^{-\omega \Delta(\rho)W\FCFS}] 
          = \frac{1}{1+\omega},
\]
so that we may apply Lemma~\ref{lem:heavytraffic}.
\end{proof}

\begin{remark}
Thus, when the service times have a finite variance, $\widehat W\FCFS$ has an exponential distribution, so that $\widehat W\ROS$ is the product of two independent exponentials with unit mean.
Letting $K_1(\cdot)$ be the modified Bessel function of the second kind, simple calculations show that
\begin{eqnarray*}
 \PP(\widehat W\ROS>x) 
    &=& \int_0^\infty \PP(t\widehat W\FCFS>x)\e^{-t}\d t\\
    &=& \int_0^\infty \exp\Bigl[-\frac{x}{t}-t\Bigr]\d t\\
    &=& 2\sqrt{x}K_1(2\sqrt{x}),
\end{eqnarray*}
which coincides with Theorem~6 of~\cite{Kin:2}.
\end{remark}

In the case of a service time distribution with regularly varying tail
and infinite variance
a similar, but new, result can be obtained from
results of Boxma and Cohen~\cite{BC} for FCFS.

\begin{theorem}\label{thm:heavytraffic:heavy}
Under Assumption~(\ref{eq:beta:tail}),
with $1 < \nu < 2$,
let $\Delta(\rho)$ be the unique root
of the equation
\begin{equation}\label{eq:deltaheavy}
 \lambda C x^{\nu-1}L(x)=1-\rho,\ x>0,
\end{equation}
such that $\Delta(\rho) \downarrow 0$ for $\rho \uparrow 1$.
Then
% $\Delta(\rho)\to0$ as $\rho\to1$ and
\[
  \lim_{\rho\to1} \EE [\e^{-\omega \Delta(\rho)W\ROS}] 
    = \int_0^\infty \frac{\e^{-t}}{1+(\omega t)^{\nu-1}}\d t.
\]
\end{theorem}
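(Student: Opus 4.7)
The plan is to reduce Theorem~\ref{thm:heavytraffic:heavy} to the analogous heavy-traffic statement for $W\FCFS$ under regularly varying service times, and then apply Lemma~\ref{lem:heavytraffic}. So the first task is to establish the Boxma--Cohen~\cite{BC} heavy-traffic limit
\[
  \lim_{\rho\to1}\EE[\e^{-\omega \Delta(\rho) W\FCFS}] = \frac{1}{1+\omega^{\nu-1}}, \qquad \omega > 0.
\]
Starting from the Pollaczek-Khinchine transform
\[
\EE[\e^{-sW\FCFS}] = \frac{1-\rho}{1-\rho\beta^{fw}\{s\}},
\]
and substituting the expansion (\ref{betares}), $1-\beta^{fw}\{s\}\sim (C/\beta)s^{\nu-1}L(1/s)$ as $s\to 0$, the denominator becomes
\[
1 - \rho\beta^{fw}\{s\} \sim (1-\rho) + \rho\lambda C s^{\nu-1} L(1/s), \qquad s\to0.
\]
Setting $s = \omega \Delta(\rho)$ and using the slow variation of $L$ (so that $L(1/(\omega\Delta(\rho)))\sim L(1/\Delta(\rho))$), together with the definition (\ref{eq:deltaheavy}), which makes $\rho\lambda C \Delta(\rho)^{\nu-1} L(1/\Delta(\rho))\sim 1-\rho$ as $\rho\to 1$, the denominator is asymptotically $(1-\rho)(1+\omega^{\nu-1})$, yielding the FCFS claim.

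With this limit at hand, the limiting LST $1/(1+\omega^{\nu-1})$ is the Laplace transform of a non-negative (Mittag--Leffler-type) random variable $\widehat{W}\FCFS$, so hypothesis (\ref{eq:scalefcfs}) of Lemma~\ref{lem:heavytraffic} is satisfied (recall that $\beta<\infty$ in our setting since $\rho=\lambda\beta<1$). The lemma then immediately yields
\[
  \lim_{\rho\to1}\EE[\e^{-\omega\Delta(\rho) W\ROS}] = \int_0^\infty \EE[\e^{-\omega t \widehat{W}\FCFS}] \e^{-t}\, \d t = \int_0^\infty \frac{\e^{-t}}{1+(\omega t)^{\nu-1}}\, \d t,
\]
which is the desired identity.

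The main obstacle is the careful treatment of the slowly varying function $L$ in the first step: one must read the definition (\ref{eq:deltaheavy}) of $\Delta(\rho)$ in the appropriate sense (with $L$ evaluated at large arguments through the substitution $s\mapsto 1/s$), and invoke a uniform version of slow variation---for instance Potter's bounds, as was already exploited in the proof of Theorem~\ref{thm:mg1}---to justify the replacement $L(1/(\omega\Delta(\rho)))\sim L(1/\Delta(\rho))$ uniformly for $\omega$ in any compact set bounded away from $0$. Once the FCFS heavy-traffic limit is in place, the passage to ROS via Lemma~\ref{lem:heavytraffic} is essentially automatic, and no additional work is required to recognize the right-hand side as the stated integral.
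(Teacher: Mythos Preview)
Your proposal is correct and follows essentially the same approach as the paper: establish the FCFS heavy-traffic limit $\EE[\e^{-\omega\Delta(\rho)W\FCFS}]\to 1/(1+\omega^{\nu-1})$ and then apply Lemma~\ref{lem:heavytraffic}. The only difference is that the paper simply cites Boxma--Cohen~\cite{BC} for the FCFS limit and the existence of $\Delta(\rho)$, whereas you sketch the derivation from the Pollaczek--Khinchine formula and~(\ref{betares}); your added caveat about reading~(\ref{eq:deltaheavy}) with $L$ evaluated at the large argument $1/\Delta(\rho)$ is well placed.
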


\begin{proof}
It has been proved in~\cite{BC} that
$\Delta(\rho)$ exists and that for any $\omega >0$, under
Assumption~(\ref{eq:beta:tail}),
\[
  \lim_{\rho\to1} \EE [\e^{-\omega \Delta(\rho)W\FCFS}] = \frac{1}{1+\omega^{\nu-1}}.
\]
The theorem now follows from Lemma~\ref{lem:heavytraffic}.
\end{proof}

\section*{Appendices}
\appendix
\section{Classes of distributions
\label{app:classes}}

{\bf Definitions and properties}
\\
We say that a random variable belongs to a certain class if its
distribution function belongs to that class.
\begin{enumerate}
\item \label{1}
A cdf $F$ belongs to the class ${\cal L}$
of long-tailed distributions if
there exists a $y>0$ (or, equivalently, for all $y>0$) such that,
as $x\to\infty$,
$$ 
\frac{\overline{F}(x+y)}{\overline{F}(x)} \rightarrow 1.
$$
\begin{enumerate}
\item \label{1.1}
If $F\in {\cal L}$, $c>0$, and $G$ is another distribution such that
$\overline{G}(x) \sim c\overline{F}(x)$ as $x\to\infty$, then
$G\in {\cal L}$.
\item \label{1.2}
If $F\in {\cal L}$ and  
$m^+ \equiv m^+(F) = \int_0^{\infty} \overline{F}(t) dt$
is finite, then the integrated tail distribution
$F^I$ belongs to ${\cal L}$ too, but the converse is not true,
in general. Here
$$
F^I(x) = \max \left( 0;  1 - \int_x^{\infty}\overline{F}(t) dt
\right).
$$ 
\item \label{1.3}
$F^I \in {\cal L}$ if and only if $\overline{F}(x) =
o(\overline{F}^I(x))$ as $x\to\infty$.
\end{enumerate}
\item \label{2}
A cdf $F$ belongs to the class ${\cal RV}$
of regularly varying distributions if there exists
a $\nu >0$ such that
$$
\overline{F}(x) = x^{-\nu} L(x),
$$
where $L(x)$ is a slowly varying (at infinity) function.
\item\label{3}
A cdf $F$ belongs to the class ${\cal D}$ if
$$
\inf_{x\geq 0}
\frac{\overline{F}(2 x)}{\overline{F}(x)} > 0 .
$$
\begin{enumerate}
\item\label{3.1}
If $F\in {\cal D}$ and $m^+ < \infty$, then $F^I \in
{\cal D}$.
\end{enumerate}
\item \label{4}
A cdf $F$ belongs to the class ${\cal IRV}$
of intermediate regularly varying distributions if
$$
\lim_{\zeta \downarrow 1} \liminf_{x\to\infty}
\frac{\overline{F}(\zeta x)}{
\overline{F}(x)} = 1.
$$
\item\label{5}
A cdf $F$ on the positive  half-line belongs to the class ${\cal S}$
of subexponential distributions if
$$
\int_0^x F(dt) \overline{F}(x-t) \sim \overline{F}(x)
\quad \mbox{as} \quad x\to\infty .
$$
\begin{enumerate}
\item
A cdf $F$ on the real line belongs to ${\cal S}$ if $F(x) {\bf
  I}(x\geq 0)$ belongs to ${\cal S}$.
\end{enumerate}
\item\label{6}
A cdf $F$  belongs to the class ${\cal S}^{*}$ if $m^+$
%$m^+ \equiv m^+(F) = \int_0^{\infty} \overline{F}(t) dt$ 
is finite and
$$
\int_0^x \overline{F}(y) \overline{F}(x-y) dy \sim
2m^+ \overline{F}(x) \quad \mbox{as} \quad x\to\infty .
$$
\item\label{7}
Relations
\begin{enumerate}
\item\label{7.1}
\cite[p.~50]{EKM}
${\cal RV} \subset {\cal IRV} \subset {\cal L}
\bigcap {\cal D} \subset {\cal S}$.
\item\label{7.2}
\cite{Kluppelberg88}
If $F\in {\cal S}^{*}$, then
$F\in {\cal S}$ and $F^I\in {\cal S}$.
%, where
%$$
%F^I(x) = \max \left( 0;  1 - \int_x^{\infty}\overline{F}(t) dt
%\right).
%$$ 
\item\label{7.3}
\cite{Kluppelberg88}
If 
$F \in {\cal L} \bigcap {\cal D}$  
and if  $m^+$
is finite,
 then $F\in {\cal S}^{*}$.
\item\label{7.4}
If $F\in {\cal D}$ and $F$ has an eventually non-increasing density, then $F\in {\cal IRV}$. Indeed, put
$$
K= \sup_{x\geq 0} \frac{\overline{F}(x/2)}{\overline{F}(x)}.
$$
Then, for $\zeta >1$ and for all sufficiently large $x$,
\begin{eqnarray*}
&&
\frac{\overline{F}( x)-\overline{F}(\zeta x)}{\overline{F}(x)}
=
\frac{\overline{F}(x/2)}{\overline{F}(x)}
\cdot
\frac{\overline{F}(x)-\overline{F}(\zeta x)}{\overline{F}(x/2)} \\
&&\leq 
K \frac{\int_x^{\zeta x} f(t) dt}{\int_{x/2}^{x} f(t) dt}
\leq K\frac{(\zeta -1)x f(x)}{\frac{1}{2}x f(x)} 
= 2 K(\zeta -1) 
\to 0
\end{eqnarray*}
as $\zeta \downarrow 1$.
\item\label{7.5}
It follows from Properties~(\ref{3.1}) and~(\ref{7.4}) that if $F\in {\cal D}$ and 
$m^+<\infty$, then $F^I \in {\cal IRV}$.
\end{enumerate}
\end{enumerate}

\bigskip
\noindent
There exists a very useful relation between the tail behavior of a
regularly varying probability distribution and the behavior of its LST
near the origin. That relation often enables one to conclude from the
form of the LST of a distribution, that the distribution itself is
regularly varying at infinity. We present this relation in
Lemma~\ref{lem:tauber} below. We use this in Section~\ref{rvlst} to
prove that the waiting time distribution in the $M/G/1$ queue under
the ROS discipline is regularly varying at infinity if the service
time distribution is regularly varying at infinity.

Let $F(\cdot)$ be the distribution of a non-negative random variable,
with LST $\phi\{s\}$ and finite first $n$ moments $\mu_1, \dots, \mu_n$
(and $\mu_0=1$). Define
\[
\phi_n\{s\} ~\egaldef~
(-1)^{n+1} \Bigl[ \phi\{s\} - \sum_{j=0}^n \mu_j \frac{(-s)^j}{j!}\Bigr].
\]
\begin{lemma}
Let $n<\nu<n+1$, $C \geq 0$. The following statements are equivalent:
\[
\phi_n\{s\} = (C+ o (1)) s^{\nu} L(1/s), ~~~ s \downarrow 0,
~~ s ~ {\rm real},
\]
\[
1-F(x) = (C + o (1)) \frac{(-1)^n}{\Gamma(1-\nu)} x^{-\nu} L(x),
~~~ x \rightarrow \infty .
\]
\label{lem:tauber}
\end{lemma}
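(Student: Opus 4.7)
The plan is to prove the equivalence by reduction to Karamata's Tauberian theorem, using an iterated-tail representation of $F$ as a bridge. For $k=0,1,\dots,n$, define
\[
T_k(x) \egaldef \frac{1}{k!}\int_x^\infty (y-x)^k\,\dd F(y),
\]
so that $T_0(x)=1-F(x)$, $T_k(0)=\mu_k/k!$, and $T_k'(x) = -T_{k-1}(x)$. Each $T_k$ is non-negative and non-increasing. The first step is to verify, by $(n+1)$-fold integration by parts (or equivalently via the integral form of the Taylor remainder for the Laplace transform), the identity
\[
\phi_n\{s\} = s^{n+1} \int_0^\infty \e^{-sx} T_n(x)\,\dd x,
\]
which converts the hypothesis on $\phi_n$ into one on the Laplace transform of the monotone function $T_n$ at the exponent $\nu-n-1\in(-1,0)$.

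The second step is to apply Karamata's Tauberian theorem to $T_n$. Setting $\rho \egaldef n+1-\nu \in (0,1)$, the assumption $\phi_n\{s\}\sim Cs^\nu L(1/s)$ becomes $\int_0^\infty \e^{-sx}T_n(x)\,\dd x\sim Cs^{-\rho}L(1/s)$, which is equivalent to $\int_0^x T_n(y)\,\dd y \sim Cx^\rho L(x)/\Gamma(1+\rho)$. Since $T_n$ is monotone, the monotone density theorem then yields
\[
T_n(x)\sim \frac{C x^{n-\nu}L(x)}{\Gamma(n+1-\nu)}.
\]
The third step is to descend from $T_n$ to $T_0 = 1 - F$ by $n$ iterations. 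At each level one uses the identity $T_k(x)=\int_x^\infty T_{k-1}(y)\,\dd y$ together with the monotonicity of $T_{k-1}$, invoking the monotone density theorem once more to pass from the asymptotics of $T_k$ to those of $T_{k-1}=-T_k'$. Exploiting $\Gamma(k+1-\nu)=(k-\nu)\Gamma(k-\nu)$, downward induction delivers
\[
T_k(x) \sim \frac{(-1)^{n-k} C\,x^{k-\nu}L(x)}{\Gamma(k+1-\nu)}, \qquad k=n,n-1,\dots,0,
\]
and the case $k=0$ is exactly the claimed tail $1-F(x)\sim \frac{(-1)^n C}{\Gamma(1-\nu)}x^{-\nu}L(x)$. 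The converse direction runs this chain in reverse: start from the tail hypothesis, apply Karamata's direct theorem to build up the asymptotics of each integrated tail $T_k$ in turn, and finish at the Laplace-transform asymptotics of $T_n$ and hence of $\phi_n$.

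The main obstacle is the careful bookkeeping of signs and Gamma-factors across the $n$ iterations. The factor $(-1)^n/\Gamma(1-\nu)$ is the net outcome of two cancelling alternations: $\Gamma(\cdot)$ flips sign each time its argument crosses a non-positive integer, while every differentiation step in the descent contributes a factor $-(k-\nu)$. A secondary technical point is verifying the hypotheses of the Tauberian and monotone density theorems at every level --- namely, that $T_k$ is non-negative, monotone, and has an integrated tail that is regularly varying at the correct index. All of this is built into the definition of $T_k$, provided $F$ has $n$ finite moments, which is implicit in the very formulation of $\phi_n$.
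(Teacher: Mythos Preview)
Your argument is correct and is essentially the classical Bingham--Doney proof via iterated tails, Karamata's Tauberian theorem, and repeated application of the monotone density theorem. The paper itself does not give a proof of this lemma: it merely cites Bingham and Doney~\cite{BD} for the case $C>0$, Boxma and Dumas~\cite{BD982} for the case $C=0$, and \cite{BGT87} for integer~$\nu$. Your write-up thus supplies exactly the argument the paper delegates to the literature.

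One small caveat: you use $\sim$ throughout, which as written covers only $C>0$. The paper explicitly singles out $C=0$ as a separate case. Your scheme does extend to $C=0$ without new ideas --- the monotone density theorem and Karamata's theorem both hold with $c=0$ (the sandwich $(1-a)x\,T_{k-1}(x)\le T_k(ax)-T_k(x)$ immediately gives $T_{k-1}(x)=o(x^{k-1-\nu}L(x))$ from $T_k(x)=o(x^{k-\nu}L(x))$) --- but you should state this explicitly rather than leave it implicit in the $\sim$ notation.
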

\noindent
The case $C>0$ is due to Bingham and Doney~\cite{BD}.
The case $C=0$ was first obtained by Vincent Dumas,
and is treated in~\cite{BD982}, Lemma~2.2.
The case of an integer~$\nu$ is more complicated;
see Theorem 8.1.6 and Chapter~3 of~\cite{BGT87}.

\section{Proof of Theorem~\ref{thm:srp}}
\label{app:srp}

Note that the distribution of the residual service time of the customer in service is the same for all non-preemptive and non-idling service disciplines.
We may therefore concentrate on the FCFS discipline.

As before, $V_{-n}$ denotes the amount of work in the system upon
arrival 
of customer~$-n$ and~$T_{-n}$ is the time between arrival of
customer~$-n$ 
and time~0 (which is the arrival time of customer~0).
In the sequel the random variable $V$ has the stationary workload distribution.

{\it Lower bound.}
For any $\varepsilon \in (0,1)$, choose $K_1>0$ such that
$\PP (V > K_1 ) \leq \varepsilon $. Then choose an integer $n>0$
such that $\PP (T_{-n} >K_1 ) \geq 1-\varepsilon $. Third, choose
$K_2>K_1$ such that $\PP (T_{-n} \in (K_1,K_2 ))\geq 1-2\varepsilon $.
Then, as $x\to\infty$,
\begin{eqnarray*}
\PP (B^{rp}>x) &\geq &
\sum_{m=n}^{\infty} \PP (V_{-m} \leq K_1, T_{-n} \in (K_1,K_2),
B_{-m} > x+T_{-m})\\
&\geq &
\sum_{m=n}^{\infty} \PP (V_{-m} \leq K_1, T_{-n} \in (K_1,K_2),
B_{-m} > x+T_{-m}-T_{-n}+K_2)\\
&\geq &
(1-2\varepsilon )(1-\varepsilon )
\sum_{l=0}^{\infty} \PP (B_0 > x+K_2 + T_{-l}) \\
&\geq &
(1-2\varepsilon )(1-\varepsilon )
\rho \PP (B^{fw}>x+K_2) \\
&\sim &
(1-2\varepsilon )(1-\varepsilon )
\rho \PP (B^{fw}>x).
\end{eqnarray*}
Letting $\varepsilon \to 0$, we get the correct lower bound.
Since $B\in {\cal D}$, the lower bound is of order
$O(\PP (Z^{fw} >cx ))$ for any positive $c$.

{\it Upper bound.}
Let $y$ be a positive number and $\eta (y) = \min \{ n\geq 1 ~:~ 
\sum_{i=1}^n B_i > y\}$, $\chi (y) = \sum_1^{\eta (y)} B_i - y$.
Since $E B$ is finite, it follows from basic renewal theory
that the family of distributions of random
variables $\{ \chi (y), ~ y>0\}$ is tight, i.e.
$u(x) \equiv  \sup_{y>0} \PP (\chi (y)>x) \to 0$ as  $x\to\infty$.

Since $\srp\leq\brp$ almost surely, we have by
Corollary~\ref{cor:bpres} and by the lower bound obtained,
\bearno
    \PP(\srp>x)
    &=& \PP\left(\srp>x, \brp>x\right)
    \\
    &= &  (1+o(1))
    \PP (\srp>x, \brp>x, \bigcup_{m=1}^{\infty}\left\{ {\st}_{-m} >
      (x+m\alpha ) 
(1-\rho )\right\})
    \\
 &= & (1+o(1)) 
\sum_{m=1}^{\infty}
   \PP (\srp>x, Z^{rp}>x, {\st}_{-m} > 
(x+m\alpha ) (1-\rho ))
.
\enarno
Denote by $f_m(x)$ the $m$-th term in the latter sum. For any
$\varepsilon \in (0,1)$, choose $K>0$ such that $\PP (V>K)\leq
\varepsilon$.
Then
\begin{eqnarray*}
f_m(x) &\leq & \PP (V_{-m}\leq K, B_{-m}>x+T_{-m}-K) \\
&+&
\PP (V_{-m} >K, B_{-m} > (x+m\alpha )(1-\rho ))\\
&+&
\PP (B^{rp}>x, V_{-m}\leq K, B_{-m} \in
((x+m\alpha )(1-\rho )-K, x+T_{-m}-K); ~\exists 1\leq l \leq m ~:~
V_{-l}=0)\\
&+&
\EE
\int_{(x+m\alpha )(1-\rho )}^{x+T_{-m}-0}
\PP (V_{-m}\leq K, V_{-m}+B_{-m} \in dt)
\PP (\chi (x+T_{-m}-t)>x ~|~ T_{-m}) \\
&\equiv &
f_{m,1}(x) + f_{m,2}(x) + f_{m,3}(x) + f_{m,4}(x).
\end{eqnarray*}
Here
\begin{eqnarray*}
f_{m,1}(x) &\leq & \PP (B_{-m} > x+T_{-m}-K);\\
f_{m,2}(x) &\leq & \varepsilon \PP (B_{-m} > (x+m\alpha )(1-\rho ));\\
f_{m,3}(x) &\leq &
\sum_{l=1}^{m-1} \PP (B_{-m}>(x+m\alpha )(1-\rho ), V_{-l}=0,
V_{-j}>0 ~~ \forall ~ l<j<m) \PP (B^{rp}>x ~|~ V_{-l}=0)\\
&\leq &
\frac{\PP (Z^{rp}>x)}{\PP(V=0)} \PP (B_{-m} > (x+m\alpha )(1-\rho
));\\
f_{m,4}(x) &\leq & u(x) \PP (B_{-m} > (x+m\alpha )(1-\rho )).
\end{eqnarray*}
Since $u(x)\to 0$ and $\PP (Z^{rp}>x)\to 0$ as $x\to\infty$,
$$
\sum_{m=1}^{\infty} (f_{m,2}(x)+f_{m,3}(x)+f_{m,4}(x)) 
\leq
(1+o(1)) \varepsilon \PP(Z^{rp}>x).
$$
Further,
$$
\sum_{m=1}^{\infty} f_{m,1}(x) \leq
(1+o(1)) \rho \PP(B^{fw}>x-K) 
\sim 
\rho \PP(B^{fw}>x).
$$
Letting $\varepsilon \downarrow 0$, the proof is completed.
\qed

\section{Proof of Theorem~\ref{thm:main}}
\label{app:main}
We focus on the waiting time $W\ROS$ of customer~0 arriving at time~0.
Our proof consists of three main parts, each corresponding to a typical scenario in which the large waiting time arises.
The intuition behind these typical scenarios was discussed below Theorem~\ref{thm:main} (it is convenient to treat the two ``middle terms'' as one scenario).

Before proceeding, we note that the distribution of the waiting time of customer~0 is not affected if we choose to use the FCFS discipline before time~0 and the ROS discipline after time~0.
Thus, $W\ROS\eqdist W\ROS'$, where $W\ROS'$ denotes the waiting time of customer~0 under the modified service discipline.

The starting point of the proof is~(\ref{any}) in Corollary~\ref{cor:non-idling}, which we repeat for convenience (the modified service discipline is non-idling and non-preemptive):
\[
\PP (W\ROS'>x)
\sim
\sum_{m=1}^{\infty}
\PP \left(
W\ROS'>x, {\st}_{-m} > (
x+m\alpha)
(1-\rho ) \right),
\qquad
x\rightarrow\infty.
\]
In the verbal discussion, we interpret this relation as follows:
there is one ``large customer'', i.e., customer~$-m$ for which $B_{-m}>(x+m\alpha)(1-\rho)$, that causes the large waiting time of customer~0.
Note that any scenario in which the service of this large customer did not start before time~0 may be neglected (i.e., is of the order $o(\PP(\sfw>x))$):
\bearno
&&\sum_{m=1}^{\infty}\PP \left(W\ROS'>x, {\st}_{-m} > (x+m\alpha)(1-\rho ), T_{-m}\leq V_{-m} \right) \\
&&\leq\,\sum_{m=1}^{\infty}\PP \left({\st}_{-m} > (x+m\alpha)(1-\rho ), T_{-m}\leq V_{-m} \right) \\
&&\leq\,\sum_{m=1}^{\infty}\PP \left( {\st}_{-m} > (x+m\alpha)(1-\rho ), T_{-m}\leq V_{-m}, V_{-m}>K  \right) \\
&&\quad+\,\sum_{m=1}^{\infty}\PP \left( {\st}_{-m} > (x+m\alpha)(1-\rho ), T_{-m}\leq V_{-m}\leq K \right) \\
&&\leq\,\PP(V>K)\, \sum_{m=1}^{\infty}\PP \left( {\st}_{-m} > (x+m\alpha)(1-\rho )\right) \\
&&\quad+\,\PP( T_{-M}\leq K)\sum_{m>M}^{\infty}\PP \left( {\st}_{-m} > (x+m\alpha)(1-\rho ) \right)
+o(\PP(\sfw>x)) \\
&&=\,\left(\PP(V>K)+\PP( T_{-M}\leq K)\right)\,\frac{\rho}{1-\rho}\PP(\sfw>x),
\enarno
which may be neglected after first taking $M\to\infty$ and then $K\to\infty$.
We have used that the sum of a finite number of terms in the above
summations is of the order $o(\PP(\sfw>x))$, 
see Property~(\ref{1.3}) in Appendix~\ref{app:classes}.
This property, as well as other steps taken in the 
proof of Theorem~\ref{thm:srp}, will be used frequently in the following.
We have thus proved that, as $
x\rightarrow\infty$,
\[
\PP (W\ROS'>x)
\sim
\sum_{m=1}^{\infty}
\PP \left(
W\ROS'>x, {\st}_{-m} > (
x+m\alpha)
(1-\rho ) ,V_{-m}<T_{-m}\right)
+ o(\PP(\sfw>x)).
\]

\paragraph{Part I.}
We start with the scenario that the large customer is in service 
for the entire interval $(0,x)$.
This is the case when the 
workload $V_{-m}<T_{-m}$ and $B_{-m}>T_{-m}-V_{-m}+x$.
This immediately implies that the waiting time of customer 0 exceeds $x$.
By Theorem~\ref{thm:srp},
% and relation~(\ref{eq:res1}) in its proof, 
we have:
\bearno
&&\sum_{m=1}^{\infty}\PP \left(W\ROS'>x, {\st}_{-m} > (x+m\alpha)(1-\rho ) ,V_{-m}<T_{-m}, B_{-m}>T_{-m}-V_{-m}+x\right) \\
&&\,=\sum_{m=1}^{\infty}\PP \left({\st}_{-m} > (x+m\alpha)(1-\rho ) ,V_{-m}<T_{-m}, B_{-m}>T_{-m}-V_{-m}+x\right) \\
&&\,\sim\,\PP(\srp>x).
\enarno
This corresponds to the first term in Theorem~\ref{thm:main}.

\paragraph{Part II.}
We now investigate the event that $W\ROS'>x$ occurs while the large customer (customer~$-m$) is still in service at time~0, but not anymore at time $x$.
Thus, $V_{-m}<T_{-m}<B_{-m}+V_{-m}<T_{-m}+x$.
Let $W\ROS'(q)$ be the remaining waiting time of customer~0 after the first service completion after time~0 if $q$ is the number of competing customers at that instant.
In the sequel we shall simply write $W\ROS'(q)$ instead of $W\ROS'(\lfloor q\rfloor)$ when $q$ is not an integer.
If customer $-m$ is still in service at time 0, we may write
\[
    W\ROS' = V_{-m}+B_{-m}-T_{-m}+W\ROS'(A(T_{-m},T_{-m}+V_{-m}+B_{-m})) ,
\]  
where $A(s,t)$ denotes the number of arrivals between times $s$ and $t$.
We obviously have
\[
    W\ROS' \geq B_{-m}-T_{-m}+W\ROS'(A(T_{-m},T_{-m}+B_{-m})) ,
\]
\begin{equation}\label{eq:bound2}
    W\ROS' \leq B_{-m}+W\ROS'(A(T_{-m},T_{-m}+V_{-m}+B_{-m})) .
\end{equation}
The first bound gives
\bear
    &&\sum_{m=1}^\infty
    \PP\left( W\ROS'>x, B_{-m}>(m\alpha+x)(1-\rho), V_{-m}<T_{-m}<B_{-m}+V_{-m}\leq T_{-m}+x\right) \nonumber\\
    &&\geq\,
    \sum_{m=1}^\infty
    \PP\left( B_{-m}-T_{-m}+W\ROS'(A(T_{-m},T_{-m}+B_{-m}))>x, B_{-m}>(m\alpha+x)(1-\rho)\right., \nonumber\\
    &&\qquad\qquad \left.V_{-m}<T_{-m}<B_{-m}+V_{-m}\leq T_{-m}+x\right) \nonumber\\
    &&\geq\,(1-\delta)^2
    \sum_{m=M}^\infty
    \PP\big( B_{-m}-m\alpha(1+\varepsilon)+W\ROS'(\frac{1}{\alpha}(1-\varepsilon)B_{-m})>x, B_{-m}>(m\alpha+x)(1-\rho), \nonumber\\
    &&\qquad\qquad\qquad\qquad m\alpha(1+\varepsilon)<B_{-m}\leq m\alpha(1-\varepsilon)-K+x\big) ,\label{eq:lower1}
\enar
where, for fixed $\varepsilon>0$, $\delta>0$, we have chosen $K>0$ such that $\PP(V>K)<\delta$ and $M>0$ such that $M\alpha(1-\varepsilon)>K$ and, for all $m\geq M$ and $y\geq M\alpha(1-\rho)$,
\[
    \PP\left((1-\varepsilon)m\alpha<T_{-m}<(1+\varepsilon)m\alpha,
        A(T_{-m},T_{-m}+y)\geq \frac{1}{\alpha}y(1-\varepsilon)\right)\geq1-\delta,
\]
which is possible by the strong law of large numbers.

Note that the summation in~(\ref{eq:lower1}) is actually truncated at $m=(x-K)/(2\varepsilon)$.
For notation it is convenient to make the summation run from $m=0$ to $\infty$.
Adding the terms for $m<M$ in~(\ref{eq:lower1}) causes an 
error of the order $o(\PP(\sfw>x))$ and since
%($B$ OR $B_{-m}$ $3 \times$ BELOW?)
\bearno
    \sum_{m=0}^\infty \PP(m\alpha(1-\varepsilon)+x-K<B_{-m}<m\alpha+x)
    &\leq& \varepsilon O(\PP(\sfw>x)),\\
    \sum_{m=0}^\infty \PP(B_{-m}>(m\alpha+x)(1-\rho),m\alpha<B_{-m}<m\alpha(1+\epsilon)) 
    &\leq& \varepsilon O(\PP(\sfw>x)),
\enarno
we have,
\bearno
    &&\sum_{m=1}^\infty
    \PP\left( W\ROS'>x, B_{-m}>(m\alpha+x)(1-\rho), V_{-m}<T_{-m}<B_{-m}+V_{-m}\leq T_{-m}+x\right) \nonumber\\
       && +\frac{\varepsilon}{(1-\delta)^2} O(\PP(\sfw>x))\\
    &&\geq\,
    \sum_{m=0}^\infty
    \PP\big( B_{-m}-m\alpha(1+\varepsilon)+W\ROS'(\frac{1}{\alpha}(1-\varepsilon)B_{-m})>x, B_{-m}>(m\alpha+x)(1-\rho), \nonumber\\
    &&\qquad\qquad\qquad\qquad m\alpha<B_{-m}\leq m\alpha+x\Big) .
\enarno
Furthermore, by Lemma~\ref{lem:wq} we can find $x_0$ such that for all $x>x_0$:
\bearno
    &&    \int_{z=\max\{(m\alpha +x)(1-\rho),m\alpha \}}^{m\alpha  +x} \dd\PP(B\leq z)\,
    \PP\left( W\ROS'(\frac{1}{\alpha }(1-\varepsilon)z)>x-z+m\alpha (1+\varepsilon)\right) \\
    && \geq (1-\delta)\int_{z=\max\{(m\alpha +x)(1-\rho),m\alpha \}}^ {m\alpha  +x} \dd\PP(B\leq z)\,
    \left(\left(1-\frac{(1-\rho)(x-z+m\alpha (1+\varepsilon))}{\rho(1-\varepsilon)z}\right)^+\right)^{\frac{1}{1-\rho}} \\
    && \geq (1-\delta)(1-\gamma)\int_{z=\max\{(m\alpha +x)(1-\rho),m\alpha \}}^ {m\alpha  +x} \dd\PP(B\leq z)\,
    \left(\left(1-\frac{(1-\rho)(x-z+m\alpha )}{\rho z}\right)^+\right)^{\frac{1}{1-\rho}} ,
\enarno
where $\gamma>0$ depends on $\epsilon$.
In the last step we used that, as $\varepsilon\to0$,
\[
    \frac{(1-\rho)(x-z+m\alpha (1+\varepsilon))}{\rho(1-\varepsilon)z}
    \rightarrow
    \frac{(1-\rho)(x-z+m\alpha )}{\rho z},
\]
uniformly in $z$  within the area of integration.
(This can be seen, using that $z\ge(m\alpha +x)(1-\rho)$ and $z\ge m\alpha $.)
So that, with $\delta\rightarrow0$, $\varepsilon\to0$ and then $\gamma\rightarrow0$ we have:
\bear
    &&\sum_{m=1}^\infty
    \PP\left( W\ROS'>x, B_{-m}>(m\alpha +x)(1-\rho), V_{-m}<T_{-m}<B_{-m}+V_{-m}\leq T_{-m}+x\right) \nonumber\\
    &&\geq\,
    \sum_{m=0}^\infty
    \int_{z=\max\left\{(m\alpha +x)(1-\rho),m\alpha \right\}}^{m\alpha +x}
    \left(1-\frac{(1-\rho)(x+m\alpha -z)}{\rho z}\right)^{\frac{1}{1-\rho}}
    d \PP\left(B\leq z\right)
    \nonumber\\
    &&\qquad     +    o(\PP(\sfw>x)).\nonumber\\
    \nonumber
\enar
Next we derive an upper bound, 
\bear
    &&\sum_{m=1}^\infty
    \PP\left( W\ROS'>x, B_{-m}>(m\alpha +x)(1-\rho), V_{-m}<T_{-m}<B_{-m}+V_{-m}\leq T_{-m}+x\right) \nonumber\\
    &&=\,
    \sum_{m=1}^\infty 
    \PP\left( V_{-m}+B_{-m}-T_{-m}+W\ROS'(A(T_{-m},T_{-m}+V_{-m}+B_{-m}))>x, \right. \nonumber\\
    &&\qquad\qquad \left. B_{-m}>(m\alpha +x)(1-\rho), V_{-m}<T_{-m}<B_{-m}+V_{-m}\leq T_{-m}+x\right) \nonumber\\
    &&\leq\,
    \PP(V> K)\, O(\PP(\sfw>x)) \nonumber \\
    &&\quad+\sum_{m=1}^\infty 
    \PP\left( K+B_{-m}-T_{-m}+W\ROS'(A(T_{-m},T_{-m}+K+B_{-m}))>x, \right. \nonumber\\
    &&\qquad\qquad \left. B_{-m}>(m\alpha +x)(1-\rho), T_{-m}-K<B_{-m}\leq T_{-m}+x\right) 
    \nonumber\\
    &&\leq\,
    \left(\delta+\PP(V> K)\right)\, O(\PP(\sfw>x)) \nonumber \\
    &&\quad+\sum_{m=M}^\infty 
    \PP\Big( K+B_{-m}-m\alpha (1-\varepsilon)+W\ROS'(\frac{1+\varepsilon}{\alpha }(K+B_{-m}))>x, \nonumber\\
    &&\qquad\qquad B_{-m}>(m\alpha +x)(1-\rho), m\alpha (1-\varepsilon)-K<B_{-m}\leq m\alpha (1+\varepsilon)+x\Big) ,
    \nonumber
\enar
where, for fixed $\varepsilon>0$, we have chosen $M>0$ such that, for all $m\geq M$ and $y\geq M\alpha (1-\rho)$,
\[
    \PP\left((1-\varepsilon)m\alpha <T_{-m}<(1+\varepsilon)m\alpha ,
        A(T_{-m},T_{-m}+y)\leq \frac{1}{\alpha }y(1+\varepsilon)\right)\geq1-\delta.
\]
As in the lower bound we may let the summation run from $m=1$ to $\infty$ and replace the condition $m\alpha (1-\varepsilon)-K<B_{-m}\leq m\alpha (1+\varepsilon)+x$ with $m\alpha -K<B_{-m}\leq m\alpha +x-K$; the error we make is of the order $\epsilon\,O(\PP(\sfw>x))$.
Also, replacing $B>(x+m\alpha )(1-\rho)$ by $B>(x+m\alpha )(1-\rho)-K$ does not decrease the probability.
\bear
    &&\sum_{m=1}^\infty
    \PP\left( W\ROS'>x, B_{-m}>(m\alpha +x)(1-\rho), V_{-m}<T_{-m}<B_{-m}+V_{-m}\leq T_{-m}+x\right) \nonumber\\
    &&\leq\,
    \left(\epsilon+\delta+\PP(V> K)\right)\, O(\PP(\sfw>x)) \nonumber \\
    &&\quad+\sum_{m=0}^\infty 
    \PP\Big( K-m\alpha (1-\varepsilon)+B_{-m}+W\ROS'(\frac{1+\varepsilon}{\alpha }(K+B_{-m}))>x, \nonumber\\
    &&\qquad\qquad B_{-m}>(m\alpha +x)(1-\rho)-K, m\alpha -K<B_{-m}\leq m\alpha +x-K\Big)
    \nonumber\\
    &&=\,
    \left(\epsilon+\delta+\PP(V> K)\right)\, O(\PP(\sfw>x)) \nonumber \\
    &&\quad+\sum_{m=0}^\infty 
    \int_{z=\max\{(m\alpha +x)(1-\rho),m\alpha \}}^{m\alpha +x} \dd \PP(B\leq z-K)\,
    \PP\Big( W\ROS'(\frac{1+\varepsilon}{\alpha }z)>x+m\alpha (1-\varepsilon)-z\Big)
    \nonumber\\
    &&\leq\,
    \left(\epsilon+\delta+\PP(V> K)\right)\, O(\PP(\sfw>x)) \nonumber \\
    &&\quad+(1+\gamma)\sum_{m=0}^\infty 
    \int_{z=\max\{(m\alpha +x)(1-\rho),m\alpha \}}^{m\alpha +x} \dd \PP(B\leq z-K)\,
    \left(\left(1-\frac{(1-\rho)(x+m\alpha -z)}{\rho z}\right)^+\right)^{\frac{1}{1-\rho}}
    .
    \nonumber
\enar
In the last step we use Lemma~\ref{lem:wq} and the uniform convergence of
\[
    \frac{(1-\rho)(x+m\alpha (1-\varepsilon)-z)}{\rho (1+\varepsilon)z}
\]
as $\varepsilon\to0$ ($\gamma>0$ depends on $\varepsilon$).
\\
Using that
\[
    \sum_{m=0}^\infty \PP(\max\{(m\alpha +x)(1-\rho),m\alpha \}-K<B<\max\{(m\alpha +x)(1-\rho),m\alpha \})
    =
    o(\PP(\sfw>x)),
\]
and
\[
    \sum_{m=0}^\infty \PP(m\alpha +x-K<B<m\alpha +x)
    =
    o(\PP(\sfw>x)),
\]
we may replace $\dd \PP(B>z-K)$ by $\dd \PP(B>z)$.
Now let $K\to\infty$, $\epsilon\to0$, $\delta\to0$ and then $\gamma\to0$ to conclude that
\bear
    &&\sum_{m=1}^\infty
    \PP\left( W\ROS'>x, B_{-m}>(m\alpha +x)(1-\rho), V_{-m}<T_{-m}<B_{-m}+V_{-m}\leq T_{-m}+x\right) \nonumber\\
    &&\leq\,
    o(\PP(\sfw>x)) +\sum_{m=0}^\infty 
    \int_{z=\max\{(m\alpha +x)(1-\rho),m\alpha \}}^{m\alpha +x} \dd \PP(B\leq z)\,
    \left(1-\frac{(1-\rho)(x+m\alpha -z)}{\rho z}\right)^{\frac{1}{1-\rho}}
    .
    \nonumber
\enar
Together with the lower bound, this shows
\bear
    &&\sum_{m=1}^\infty
    \PP\left( W\ROS'>x, B_{-m}>(m\alpha +x)(1-\rho), V_{-m}<T_{-m}<B_{-m}+V_{-m}\leq T_{-m}+x\right) \nonumber\\
    &&=\,
    o(\PP(\sfw>x)) +\sum_{m=0}^\infty 
    \int_{z=\max\{(m\alpha +x)(1-\rho),m\alpha \}}^{m\alpha +x} \dd \PP(B\leq z)\,
    \left(1-\frac{(1-\rho)(x+m\alpha -z)}{\rho z}\right)^{\frac{1}{1-\rho}}
    .
    \nonumber
\enar
The second and third term in Theorem~\ref{thm:main} now readily follow, using that
\bearno
    &&\sum_{m=0}^\infty \int_{z=\max\{(m\alpha +x)(1-\rho),m\alpha \}}^{m\alpha +x} \dd \PP(B\leq z)\,
    \left(1-\frac{(1-\rho)(x+m\alpha -z)}{\rho z}\right)^{\frac{1}{1-\rho}}
    \\
    &&=\,\int_{v=0}^{\infty}\dd v \int_{z=\max\{(v\alpha +x)(1-\rho),v\alpha \}}^{v\alpha +x} \dd \PP(B\leq z)\,
    \left(1-\frac{(1-\rho)(x+v\alpha -z)}{\rho z}\right)^{\frac{1}{1-\rho}}\\
    &&\qquad+\,o(\PP(\sfw>x)).
\enarno

\paragraph{Part III.}
Finally, we deal with the last possible scenario in which service of the large customer ends before time~0.
Thus, $V_{-m}+B_{-m}<T_{-m}$.
Suppose that customer $-m+N$ is in service at time~0; $1\leq N\leq m-1$.
We can bound the waiting time of customer~0 from below by
\bearno
    W\ROS' &\ge & 
    W\ROS'(A(-T_{-m},0)-N)
    \,=\,W\ROS'(m-N),
\enarno
and from above by
\bearno
    W\ROS'&\leq& B_{-m+N}+W\ROS'(m-N+1).
\enarno
We start with the lower bound.
We shall denote the number of departures in the interval $[u,v)$ by $D(u,v)$.
Note that $N=D(-T_{-m}+V_{-m}+B_{-m},0)\leq D(-T_{-m}+B_{-m},0)$.
In the following we take $\varepsilon$, $\delta$, $M$ and $K$ such that $\PP(V>K)<\delta$ and for all $m\ge M$, $y\geq K$,
\[
    \PP\left((1-\epsilon)m\alpha <T_{-m}<(1+\varepsilon)m\alpha , 
    D(-y,0)>(1-\varepsilon)\frac{y}{\beta}\right)>1-\delta.
\]
We have,
\bearno
    &&\sum_{m=0}^\infty \PP\left(W\ROS'>x,B_{-m}>(m\alpha +x)(1-\rho),V_{-m}+B_{-m}<T_{-m}\right) 
    \\
    &&\geq \,(1-\delta)^2 \sum_{m=M}^\infty
    \PP\Big(W\ROS'(m-(1-\varepsilon)\frac{1}{\beta}\left((1+\varepsilon)m\alpha -B_{-m}\right))>x,\\
    &&\qquad\qquad \qquad\qquad \qquad\qquad B_{-m}>(m\alpha +x)(1-\rho),K+B_{-m}<(1-\varepsilon)m\alpha \Big)
    \\
    &&= \,\varepsilon O(\PP(\sfw>x))
    \,+\,(1-\delta)^2 \sum_{m=0}^\infty
    \PP\Big(W\ROS'(m-(1-\varepsilon)\frac{1}{\beta}\left((1+\varepsilon)m\alpha -B_{-m}\right))>x,\\
    &&\qquad\qquad \qquad\qquad \qquad\qquad \qquad\qquad \qquad\qquad B_{-m}>(m\alpha +x)(1-\rho),B_{-m}<m\alpha \Big)
    \\
    &&= \,\varepsilon O(\PP(\sfw>x))\\
    &&\qquad+\,(1-\delta)^2 \sum_{m=0}^\infty
    \int_{z=(m\alpha +x)(1-\rho)}^{m\alpha } \dd \PP(B\leq z)\,
    \PP\Big(W\ROS'(m-(1-\varepsilon)\frac{1}{\beta}\left((1+\varepsilon)m\alpha -z\right))>x\Big)
    \\
    &&\geq \,\varepsilon O(\PP(\sfw>x))\\
    &&\qquad+\,(1-\gamma)(1-\delta)^2 \sum_{m=0}^\infty
    \int_{z=(m\alpha +x)(1-\rho)}^{m\alpha } \dd \PP(B\leq z)\,
    \left(1-\frac{(1-\rho)x}{z-m\alpha (1-\rho)}    \right)^\frac{1}{1-\rho},
\enarno
where $\gamma>0$ depends on $\epsilon$.
In the last step we used Lemma~\ref{lem:wq} and the uniform convergence of
\[
    \frac{(1-\rho)x}{\beta(m-(1-\varepsilon)\frac{1}{\beta}\left((1+\varepsilon)m\alpha -z\right))}
\]
as $\varepsilon\to0$.
Similar to Part II it can be shown that
\bearno
    &&\sum_{m=0}^\infty
    \int_{z=(m\alpha +x)(1-\rho)}^{m\alpha } \dd \PP(B\leq z)\,
    \left(1-\frac{(1-\rho)x}{z-m\alpha (1-\rho)}    \right)^\frac{1}{1-\rho} \\
    &&=\,o(\PP(\sfw>x))+
    \int_{v=0}^\infty
    \int_{z=(v\alpha +x)(1-\rho)}^{v\alpha } \dd \PP(B\leq z)\,
    \left(1-\frac{(1-\rho)x}{z-v\alpha (1-\rho)}    \right)^\frac{1}{1-\rho}.
\enarno
Letting $\varepsilon\to0$, $\delta\to0$ and $\gamma\to0$ we thus have proved that
\bearno
    &&\sum_{m=0}^\infty \PP\left(W\ROS'>x,B_{-m}>(m\alpha +x)(1-\rho),V_{-m}+B_{-m}<T_{-m}\right) 
    \\
    &&\geq \,o(\PP(\sfw>x))
    \,+\,
    \int_{v=0}^\infty
    \int_{z=(v\alpha +x)(1-\rho)}^{v\alpha } \dd \PP(B\leq z)\,
    \left(1-\frac{(1-\rho)x}{z-v\alpha (1-\rho)}    \right)^\frac{1}{1-\rho}.
\enarno
It remains to show that the right-hand side is also an upper bound for the left-hand side.
Recall that $N=D(-T_{-m}+V_{-m}+B_{-m},0)$ and $W\ROS'\leq B_{-m+N}+W\ROS'(m-N+1)$.
Note that, if $\varepsilon>0$ and $\delta>0$ and $M$ such that $\PP(T_{-m}>(1+\varepsilon)m\alpha )<\delta$ for all $m\ge M$, then
\bearno
    &&\sum_{m=0}^\infty \PP\left(B_{-m}>(m\alpha +x)(1-\rho),V_{-m}+B_{-m}<T_{-m},V_{-m}+B_{-m}>(1-2\varepsilon)m\alpha \right) 
    \\
    &&\leq\, (\delta +\PP(V>K)) O(\PP(\sfw>x))\\
    &&\qquad+ \sum_{m=M}^\infty \PP\left(B_{-m}>(m\alpha +x)(1-\rho),(1-2\varepsilon)m\alpha -K<B_{-m}<(1+\varepsilon)m\alpha \right) 
    \\
    &&=\, (\varepsilon + \delta +\PP(V>K)) O(\PP(\sfw>x)).
\enarno
We shall use this in what follows.
In addition, let $M$ and $K$ be such that $\PP(V>K)<\delta$, and for all $m\ge M$ and $y\ge\varepsilon M\alpha $,
\[
    \PP(D(-y,0)\leq (1-\varepsilon)y/\beta)<\delta.
\]
Also, we take $L$ such that $\PP(B>L)<\delta$.
\bearno
    &&\sum_{m=0}^\infty \PP\left(W\ROS'>x,B_{-m}>(m\alpha +x)(1-\rho),V_{-m}+B_{-m}<T_{-m}\right) 
    \\
    &&\leq\, (\varepsilon+\delta+\PP(V>K)+\PP(B>L)) O(\PP(\sfw>x))
    \\
    &&+\sum_{m=M}^\infty 
    \PP\left(W\ROS'(m-N+1)>x-L,B_{-m}>(m\alpha +x-L)(1-\rho),V_{-m}+B_{-m}<(1-2\varepsilon)m\alpha \right)     \\
    &&\leq\, (\varepsilon+\delta) O(\PP(\sfw>x+L))
    \\
    &&\quad+\sum_{m=M}^\infty     
    \PP\Big(W\ROS'(m-(1-\varepsilon)\frac{1}{\beta}((1-\varepsilon)m\alpha -K-B_{-m})+1)>x,\\
    &&\qquad\qquad B_{-m}>(m\alpha +x)(1-\rho),B_{-m}<(1-2\varepsilon)m\alpha \Big)     \\
    &&=\, (\varepsilon+\delta) O(\PP(\sfw>x))
    \\
    &&\quad+\sum_{m=0}^\infty     
    \PP\Big(W\ROS'(m-(1-\varepsilon)\frac{1}{\beta}((1-\varepsilon)m\alpha -K-B_{-m})+1)>x,\\
    &&\qquad\qquad B_{-m}>(m\alpha +x)(1-\rho)-K-1,B_{-m}<m\alpha -K-1\Big)     \\
    &&= \,(\varepsilon+\delta) O(\PP(\sfw>x))
    \,+\,
    (1+\gamma)
    \int_{v=0}^\infty
    \int_{z=(v\alpha +x)(1-\rho)}^{v\alpha } \dd \PP(B\leq z)\,
    \left(1-\frac{(1-\rho)x}{z-v\alpha (1-\rho)}    \right)^\frac{1}{1-\rho}
    .
\enarno
As before, in the last step we use Lemma~\ref{lem:wq}, the uniform convergence of
\[
    \frac{(1-\rho)x}{\beta m-(1-\varepsilon)((1-\varepsilon)m\alpha -K-B_{-m})+\beta},
\]
as $\varepsilon\to0$ and the fact that replacing the summation with an integral and $\dd \PP(B\leq z-K-1)$ with $\dd P(B\leq z)$ introduces an error of the order $o(\PP(\sfw>x))$.
Letting $\varepsilon\to0$, $\delta\to0$ and $\gamma\to0$ yields the last term in Theorem~\ref{thm:main}.
\qed

\section{Random and deterministic arrivals}
\label{app:constarr}

The following lemma states that when interested in events 
involving a large service time, we may in fact ignore the 
randomness in the arrival process and replace it by a deterministic 
arrival process with the same mean arrival rate.
Thus, heuristically, we may concentrate on the $D/G/1$ queue 
instead of the $GI/G/1$ queue.
Although the lemma is not explicitly used in the paper, it
has been very useful in guiding us to the proof of several of its results.
We formulate it here because we expect that a reduction to a deterministic
arrival process will often be helpful in proving tail asymptotics,
see also Baccelli and Foss \cite{BF}.

\begin{lemma}
\label{lem:constarr}
If $B=B_0$ has a finite first moment  and if its integrated tail
distribution belongs to ${\cal L}$, then, 
%If $\st\in{\cal L}\bigcap{\cal D}$ then, 
for any constant
% $c_1>0$ and 
$c>0$, as $x\to\infty$,
\bearno
    \sum_{m=0}^\infty \PP\left(B>x+cT_{-m}\right)
    &\sim&
    \sum_{m=0}^\infty \PP\left(B>x+cT_{-m},B>x+cm\alpha \right) \\
    &\sim&
    \sum_{m=0}^\infty \PP\left(B>x+cm\alpha \right) \\
    &\sim&
    \frac{\rho}{c}\PP\left(B^{fw}>x\right).
\enarno
\end{lemma}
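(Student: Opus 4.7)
My plan is to prove the three asymptotic equivalences from right to left, working entirely with the decreasing function $\bar F(\cdot):=\PP(B>\cdot)$ and the identity $\int_x^\infty\bar F(u)\,\d u=\beta\PP(B^{fw}>x)$. The hypothesis $B^I\in\mathcal L$ enters through two consequences I will use repeatedly: Property~(\ref{1.3}) gives $\bar F(x)=o(\PP(B^{fw}>x))$, and $B^I\in\mathcal L$ itself gives $\PP(B^{fw}>x+y)\sim\PP(B^{fw}>x)$ for every fixed $y$.

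For the rightmost equivalence I would sandwich the deterministic sum using monotonicity of $\bar F$:
\[
\int_0^\infty\bar F(x+c\alpha t)\,\d t\;\leq\;\sum_{m=0}^\infty\bar F(x+cm\alpha)\;\leq\;\bar F(x)+\int_0^\infty\bar F(x+c\alpha t)\,\d t.
\]
The integral equals $\frac{1}{c\alpha}\int_x^\infty\bar F(u)\,\d u=\frac{\beta}{c\alpha}\PP(B^{fw}>x)=\frac{\rho}{c}\PP(B^{fw}>x)$, and $\bar F(x)=o(\PP(B^{fw}>x))$, so the rightmost asymptotic $\sim\frac{\rho}{c}\PP(B^{fw}>x)$ follows.

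For the first equivalence, I would fix $\epsilon\in(0,1)$ and, as in the proof of Proposition~\ref{prop:bpres}, use Chernoff to pick $r>0$ small enough that $\sum_{m=0}^\infty\PP(T_{-m}\notin[m\alpha(1-\epsilon),m\alpha(1+\epsilon)])<\infty$. On the complementary event, monotonicity of $\bar F$ gives
\[
\bar F(x+cm\alpha(1+\epsilon))\;\leq\;\bar F(x+cT_{-m})\;\leq\;\bar F(x+cm\alpha(1-\epsilon)),
\]
and applying the previous step with $c$ replaced by $c(1\pm\epsilon)$ yields
\[
\frac{\rho(1+o(1))}{c(1+\epsilon)}\PP(B^{fw}>x)\;\leq\;\sum_{m=0}^\infty\EE\bar F(x+cT_{-m})\;\leq\;\frac{\rho(1+o(1))}{c(1-\epsilon)}\PP(B^{fw}>x),
\]
the exceptional sum contributing only $O(\bar F(x))=o(\PP(B^{fw}>x))$. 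Letting $\epsilon\downarrow 0$ closes this step.

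For the middle equivalence it suffices to show that $\sum_{m=0}^\infty\PP(x+cT_{-m}<B\leq x+cm\alpha)=o(\PP(B^{fw}>x))$. I would split on whether $T_{-m}\geq m\alpha(1-\epsilon)$: on that event the integrand is bounded by $\bar F(x+cm\alpha(1-\epsilon))-\bar F(x+cm\alpha)$, and summing, using the first step at both scales, gives a quantity asymptotic to $(\frac{\rho}{c(1-\epsilon)}-\frac{\rho}{c})\PP(B^{fw}>x)$, which vanishes as $\epsilon\downarrow 0$. On the complementary event, the bound $\bar F(x+cT_{-m})\leq\bar F(x)$ gives a contribution at most $\bar F(x)\sum_m\PP(T_{-m}<m\alpha(1-\epsilon))$, a constant times $\bar F(x)=o(\PP(B^{fw}>x))$. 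The main delicate point I anticipate is ensuring that this difference-of-sums argument produces a genuinely $\epsilon$-small leading term and not a non-vanishing remainder; but writing the difference of Riemann sums in the integral form $\int_0^\infty[\bar F(x+c\alpha(1-\epsilon)t)-\bar F(x+c\alpha t)]\,\d t=(\frac{1}{c(1-\epsilon)}-\frac{1}{c})\int_x^\infty\bar F(u)\,\d u$ exposes the required factor $\epsilon$ explicitly, with boundary error $O(\bar F(x))=o(\PP(B^{fw}>x))$.
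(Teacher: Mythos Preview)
Your overall plan is correct and parallels the paper's proof, but there is one real gap. You claim that Chernoff yields
\[
\sum_{m=0}^\infty \PP\bigl(T_{-m}\notin[m\alpha(1-\epsilon),\,m\alpha(1+\epsilon)]\bigr)<\infty .
\]
This is not available in general: Chernoff for the \emph{upper} deviation $\{T_{-m}>m\alpha(1+\epsilon)\}$ would require $\EE\e^{rt_1}<\infty$ for some $r>0$, which is nowhere assumed for the inter-arrival times (only $\EE t_1=\alpha<\infty$ is used). In the proof of Proposition~\ref{prop:bpres} that you cite, the Chernoff bound is always applied with $\e^{-rT_m}$, i.e.\ only to the \emph{lower} tail of partial sums of nonnegative summands, where $\EE\e^{-rt_1}$ is automatically finite. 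Your upper bound on $\sum_m\PP(B>x+cT_{-m})$ and your middle-equivalence argument can both be rewritten to split only on $\{T_{-m}\ge m\alpha(1-\epsilon)\}$ and then go through with the one-sided summability $\sum_m\PP(T_{-m}<m\alpha(1-\epsilon))<\infty$. The problem is your \emph{lower} bound on $\sum_m\PP(B>x+cT_{-m})$: there the ``exceptional sum'' involves $\{T_{-m}>m\alpha(1+\epsilon)\}$, and summability can fail.

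The paper avoids this without extra moment assumptions. For the lower bound it uses the SLLN in the form: given $\epsilon$, choose $R>0$ with $\inf_{m\ge0}\PP(T_{-m}\le m\alpha(1+\epsilon)+R)\ge1-\epsilon$ (possible because $\sup_m(T_{-m}-m\alpha(1+\epsilon))<\infty$ a.s.). This gives
\[
\sum_m\PP(B>x+cT_{-m})\ \ge\ (1-\epsilon)\sum_m\bar F\bigl(x+cm\alpha(1+\epsilon)+cR\bigr)\ \sim\ \frac{(1-\epsilon)\rho}{c(1+\epsilon)}\,\PP(B^{fw}>x+cR),
\]
and then $B^{fw}\in{\cal L}$ absorbs the fixed shift $cR$. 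With this repair your argument is complete and essentially coincides with the paper's; your integral sandwich for the rightmost equivalence and your difference-of-sums treatment of the joint-probability term are correct as written.
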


\begin{proof}
From Appendix~\ref{app:classes} (Property~(\ref{1.3}) in
Appendix~\ref{app:classes}), the integrated tail 
distribution of $B$ belongs to ${\cal L}$ if and only if $B^{fw} \in
{\cal L}$.

{\it Lower bound.}
For any $\varepsilon \in (0,1)$, choose $R>0$ such that
$$
\inf_{m\geq 0} 
\PP\left(T_{-m}\leq m\alpha (1+\varepsilon) +R \right)\geq 1-
\varepsilon .
$$ 
Then, as $x\to\infty$,
\begin{eqnarray*}
 \sum_{m=0}^\infty \PP\left(B>x+cT_{-m}\right) &\geq &
 \sum_{m=0}^\infty \PP\left(B>x+cm\alpha (1+\varepsilon) + c R,
T_{-m}\leq m\alpha (1+\varepsilon) +R\right)\\
&\geq &
(1-\varepsilon )
\sum_{m=0}^\infty \PP\left(B>x+cm\alpha (1+\varepsilon) + c R \right)\\
&\geq &
\frac{1-\varepsilon}{1+\varepsilon}
 \frac{\rho}{c}\PP\left(B^{fw}>x+cR\right)\\
&\sim &
\frac{1-\varepsilon}{1+\varepsilon}
 \frac{\rho}{c}\PP\left(B^{fw}>x \right) .
\end{eqnarray*}
Letting $\varepsilon \downarrow 0$, we get the right lower bound.

{\it Upper bound.}
Fix any $\varepsilon \in (0,1)$. Since $T_{-m}$ are partial
sums of non-negative i.i.d. r.v.'s with a finite positive mean $\alpha$,
$$
K\equiv  \sum_{m=0}^{\infty} \PP
\left(T_{-m}\leq m\alpha (1-\varepsilon )\right) < \infty .
$$
Then, as $x\to\infty$,
\begin{eqnarray*}
\sum_{m=0}^\infty \PP\left(B>x+cT_{-m}\right) &\leq &
\sum_{m=0}^\infty \PP\left(B>x+cm\alpha (1-\varepsilon ),
T_{-m}\geq m\alpha (1-\varepsilon )\right) \\
&+& 
\sum_{m=0}^\infty \PP\left(B>x,
T_{-m}\leq m\alpha (1-\varepsilon )\right) \\
&\leq &
 \sum_{m=0}^\infty \PP\left(B>x+cm\alpha (1-\varepsilon )\right)
+ K \PP (B>x) \\
&\leq &
\frac{1}{1-\varepsilon} 
\frac{\rho}{c}\PP\left(B^{fw}>x-c\alpha \right) + 
 K \PP (B>x) \\
&\sim & \frac{1}{1-\varepsilon} 
\frac{\rho}{c}\PP\left(B^{fw}>x\right),
\end{eqnarray*}
since $B^{fw}\in {\cal L}$ and
$\PP (B>x) = o(\PP (B^{fw}>x))$, see Appendix~\ref{app:classes} (Properties~(\ref{1.2}) and (\ref{1.3})).
Letting $\varepsilon \downarrow 0$, we get an upper bound which
coincides with the lower bound. 

\end{proof}

\end{document}